\documentclass[amsfonts,11pt]{amsart}
\usepackage{amsfonts, mathrsfs}
\usepackage{ifthen}
\usepackage{amsthm}
\usepackage{amsmath}
\usepackage{graphicx}
\usepackage{amscd,amssymb,amsthm}
\usepackage{graphicx}
\usepackage{epstopdf}
\usepackage{hyperref}
\usepackage{algorithm2e}
\usepackage{mathtools}

\newcounter{minutes}
\divide\time by 60
\newcounter{hours}
\multiply\time by 60 \addtocounter{minutes}{-\time}

\newcommand{\real}{\operatorname{Re}}

\newcommand{\D}{\mathbb D}

\newcommand{\Z}{\mathbb Z}

\newtheorem{lemma}{Lemma}

\newtheorem{remark}{Remark}

\newtheorem{theorem}{Theorem}

\subjclass[2010]{33C10, 33C15, 26D07, 30C80.}
\keywords{Bessel functions; general Bessel functions; Coulomb wave functions; modified Bessel functions; parabolic cylinder functions; Tur\'an type inequalities; subordinating factor sequence; convex function; positive definite function; conditionally negative definite function; Hausdorff moment sequence.}
\usepackage{xcolor}
\begin{document}

\title[Tur\'an type inequalities for oscillatory special functions]{Tur\'an type inequalities for oscillatory special functions}

\author[I. Akta\c{s}]{Ibrahim Akta\c{s}}
\address{Department of Mathematics, Karamano\u{g}lu Mehmetbey University, 70100 Karaman, Turkey}
\email{ibrahimaktas@kmu.edu.tr}

\author[\'A. Baricz]{\'Arp\'ad Baricz$^{\bigstar}$}
\address{Department of Economics, Babe\c{s}-Bolyai University, 400591 Cluj-Napoca, Romania and Institute of Applied Mathematics, \'Obuda University, 1034 Budapest, Hungary}
\email{bariczocsi@yahoo.com}

\def\thefootnote{}
\footnotetext{ \texttt{File:~\jobname .tex,
         printed: \number\year-0\number\month-\number\day,
          \thehours.\ifnum\theminutes<10{0}\fi\theminutes}
} \makeatletter\def\thefootnote{\@arabic\c@footnote}\makeatother

\dedicatory{Dedicated to Ahsen, Bor\'oka, Hafsa and Kopp\'any}

\thanks{${}^{\bigstar}$Corresponding author}

\maketitle

\begin{abstract}
In this paper our aim is to prove a conjecture of \'A. Baricz on Bessel functions of the first kind, which improves the classical Tur\'an type inequality for Bessel functions of the first kind proved by O. Sz\'asz. The idea is to consider the normalized Tur\'an expression for Bessel functions of the first kind and to show that between two consecutive zeros of the Bessel functions of the first kind the branch of this normalized Tur\'an expression has a local minimum, and the minimum values form a strictly increasing convergent sequence, whose terms are strictly decreasing with respect to the order. Moreover, we extend this result to general Bessel functions and regular Coulomb wave functions, and we use a similar approach to show sharp Tur\'an type inequalities for modified Bessel functions of purely imaginary order and parabolic cylinder functions. In addition, we prove a complex analogue of the result on Bessel functions of the first kind about the strictly decreasing property of the successive minimum values: an inclusion property in the complex plane of the Tur\'an expression for Bessel functions of the first kind by using the subordinating factor sequence technique in the sense of H.S. Wilf. The techniques employed in the paper may be useful to treat similar problems where Tur\'anians or normalized Tur\'anians of other oscillatory special functions appear.
\end{abstract}

\section{\bf Introduction}

Tur\'an-type inequalities form a classical and still active part of the theory of special functions, orthogonal polynomials, and analytic inequalities. For a family of functions or polynomials $\{u_\lambda\}$ indexed by an integer or a real parameter, the basic object is the Tur\'anian
\[\Delta_\lambda(x)=u_\lambda^2(x)-u_{\lambda-1}(x)u_{\lambda+1}(x),\]
and the central question is to determine its sign, its sharp bounds, and the parameter and argument ranges on which these properties hold. When the members of the family are positive, such inequalities are closely related to log-concavity or log-convexity in the index, for oscillatory functions, however, the determinant formulation is more robust because the individual functions change sign. The subject originates with the Tur\'an's inequality for the Legendre polynomials. P. Tur\'an communicated his result on Legendre polynomials to G. Szeg\H{o} \cite{Szego1948}, who published four different proofs in 1948 and discussed extensions to other classical families of orthogonal polynomials, however, Tur\'an's own paper \cite{Turan1950}, motivated by the zeros of Legendre polynomials, appeared in 1950. This starting point already displays two features that have remained central up to now: the role of three-term recurrence relations and the relation between Tur\'an determinants and the zero geometry of the underlying solutions. G. Szeg\H{o}'s work was quickly followed by results for ultraspherical, Laguerre, and Hermite polynomials, and by systematic treatments based on recurrence formulas and entire-function methods, see for example \cite{Szasz1950,Szasz1951,ThiruvenkatacharNanjundiah1951,Skovgaard1954}. For Jacobi polynomials, G. Gasper obtained an important normalized Tur\'an inequality under natural restrictions on the parameters \cite{Gasper1971,Gasper1972}. Moreover, at a more structural level, S. Karlin and G. Szeg\H{o} studied higher-order determinants with orthogonal-polynomial entries, placing the second-order Tur\'anian in a wider theory of sign-regular determinants \cite{KarlinSzego1960}. These developments made clear that Tur\'an-type inequalities are not just isolated identities but manifestations of positivity, total positivity, and zero interlacing in families satisfying differential or recurrence equations.

The oscillatory case is particularly significant because positivity arguments based only on the values of the functions are generally unavailable across successive nodal intervals. Bessel functions provided one of the earliest and most influential examples beyond orthogonal polynomials: O. Sz\'asz \cite{Szasz1950} treated Bessel functions together with ultraspherical polynomials in 1950, and V.R. Thiruvenkatachar and T.S. Nanjundiah \cite{ThiruvenkatacharNanjundiah1951} developed a broad collection of related Bessel and orthogonal-polynomial inequalities in 1951. For Bessel functions of the first kind, Tur\'an determinants are closely connected with recurrence relations, derivative identities, Neumann-series representations, and the distribution of real zeros. A survey and substantial extension, including higher-order Tur\'an determinants and results for Bessel functions of the second kind, was given by \'A. Baricz and T.K. Pog\'any \cite{BariczPogany2014}. The passage from $J_\mu$ and $Y_\mu$ to a general cylinder function $C_\mu=(\cos\alpha)J_\mu-(\sin\alpha)Y_\mu$ shows especially clearly how the sign of the Tur\'anian interacts with the first positive zero and with the oscillatory region \cite{BariczPonnusamySingh2016}. In this setting, sharp inequalities are often local in the argument or depend explicitly on the zeros, in contrast with the simpler global positivity statements available for many non-oscillatory functions. The same philosophy has been successfully extended to other solutions of Bessel-type equations. For Lommel functions, \'A. Baricz and S. Koumandos \cite{BariczKoumandos2016} used canonical products, information on real zeros, and the Laguerre--P\'olya class to obtain Tur\'an type inequalities. For Struve functions, \'A. Baricz, S. Ponnusamy, and S. Singh \cite{BariczPonnusamySingh2017} combined zero information with Mittag--Leffler and infinite-product representations, again emphasizing the effectiveness of methods adapted to oscillatory entire functions. These works illustrate a general principle: when a special function has sufficiently controlled real zeros, the logarithmic derivative obtained from a product expansion can often be converted into a Tur\'an-type inequality. Conversely, a Tur\'an type inequality can yield monotonicity information for logarithmic derivatives and ratios of contiguous functions, thereby feeding back into the analysis of zeros and extrema.

Modified Bessel functions form a useful complementary, non-oscillatory model in which the connection with parameter log-concavity and log-convexity can be made globally.
The classical inequality for $I_\nu$ was already obtained by V.R. Thiruvenkatachar and T.S. Nanjundiah \cite{ThiruvenkatacharNanjundiah1951}, while sharp two-sided estimates for the Tur\'anians of $I_\nu$ and $K_\nu$ were developed much later \cite{Baricz2010}. Such estimates are frequently equivalent, through recurrence relations, to bounds for the logarithmic derivatives or for ratios of functions of consecutive orders. This equivalence is important in computation: J. Segura \cite{Segura2011} showed that sharp ratio bounds lead to effective estimates for associated Tur\'an type inequalities. Thus, Tur\'an type inequalities have a practical role in stable evaluation and approximation of special-function ratios, not merely a qualitative role in proving positivity. They also have applications outside classical analysis. For example, bounds equivalent to Tur\'an inequalities for modified Bessel functions have been used to prove log-concavity properties of the gamma-gamma distribution function \cite{BariczPonnusamyVuorinen2011}. Related monotonicity results for products of modified Bessel functions enter the stability analysis of radially symmetric solutions of a generalized FitzHugh--Nagumo equation \cite{BariczPonnusamy2013}. More generally, the same estimates occur in problems involving applications in applied mathematics, biology, chemistry, physics and
engineering sciences, and numerical evaluation whenever ratios or logarithmic derivatives of special functions control the quantity of interest, see \cite{Baricz2015} for more details. Related to the results on non-oscillatory special functions we also refer to the papers \cite{KarpSitnik2010} and \cite{KalmykovKarp2013} (and to the references therein) of D. Karp and his coauthors on hypergeometric functions.

Accordingly, the modern theory combines several complementary tools: three-term recurrences, differential equations and Riccati transformations, integral representations, Sturm comparison, zero interlacing, canonical products, and the Laguerre-P\'olya theory of real entire functions. For oscillatory special functions, methods that retain explicit information on zeros are often indispensable, because the Tur\'anian may have a qualitatively different behavior before and after the first few zeros. The purpose of the present work is situated in this line of research: to establish sharp Tur\'an type inequalities for special functions with oscillatory structure in order to improve and complement the known results in the literature for Bessel functions of the first kind, general cylinder functions, regular Coulomb wave functions, modified Bessel functions of the second kind of purely imaginary order and parabolic cylinder functions. The paper is organized as follows: in Section 2 our aim is to solve a conjecture on Bessel functions of the first kind proposed by \'A. Baricz in \cite{Baricz2010} and to complete the result on the successive minima of the normalized Tur\'anian for Bessel functions of the first kind, by showing the monotonic decreasing property of the minimum values with respect to the order. In Section 3 our aim is improve the results stated in \cite{BariczPogany2014,BariczPonnusamySingh2016} and for this we show that the above mentioned results on Bessel functions of the first kind can be extended to general Bessel functions with a similar approach, however with much more attention on the zeros of cylinder functions. Section 4 is devoted to the study of regular Coulomb wave functions and we are going to prove here that some of the results on Bessel functions of the first kind can be extended also to regular Coulomb wave functions. The results of Section 4 complement naturally the known results in \cite{Baricz2015C}. Section 5 contains the corresponding results on Tur\'an type inequalities for modified Bessel functions of the second kind of purely imaginary order, a subject which has not been yet investigated in literature. In Section 6 we are interested on parabolic cylinder functions, and we show that the approach used in the previous section works well also in this case, and with this we complement the results stated in \cite{BariczIsmail2013} for parabolic cylinder functions. Section 7, which stands out from the rest, contains the study of the Tur\'anian of the Bessel functions of the first kind from the point of view of complex function theory, and it was motivated by the results of Section 2 as well as the recent researches in the literature concerning the inclusion properties of various special functions. Finally, in Section 8 we present a short discussion together with some questions and future research directions.

\section{\bf Tur\'an type inequalities for Bessel functions of the first kind}
\setcounter{equation}{0}

For $\mu>0$, let
\[\Delta_\mu(x)=J_\mu^2(x)-J_{\mu-1}(x)J_{\mu+1}(x)\]
be the Tur\'an determinant associated with the Bessel function of the first kind $J_{\mu}$, and consider the normalized quotient
\begin{equation}\label{eq:Phi-def}
 \Phi_\mu(x)=\frac{\Delta_\mu(x)}{J_\mu^2(x)}
 =1-\frac{J_{\mu-1}(x)J_{\mu+1}(x)}{J_\mu^2(x)}.
\end{equation}
Recall that for $x\notin\{j_{\mu,n}:n\ge1\}$, in view of Skovgaard's identity \cite{Skovgaard1954}
\begin{equation}\label{eq:ML}
\Phi_\mu(x)=\sum_{n\ge1}\frac{4j_{\mu,n}^2}{(x^2-j_{\mu,n}^2)^2},
\end{equation}
we clearly have that $\Phi_\mu(x)>0$ and
\begin{equation}\label{eq:Phi-second}
\Phi_\mu''(x)=\sum_{n\ge1} \frac{16j_{\mu,k}^2(5x^2+j_{\mu,n}^2)}{(x^2-j_{\mu,n}^2)^4}>0,
\end{equation}
where $j_{\mu,n}$ is the $n$th positive zero of the Bessel function $J_\mu.$ This in turn implies that $\Phi_\mu$ is strictly convex on every interval $(j_{\mu,n},j_{\mu,n+1}),$ and since $\Phi_\mu(x)\to+\infty$ at both endpoints, there is a unique point $\alpha_{\mu,n}\in(j_{\mu,n},j_{\mu,n+1})$ for which $\Phi_\mu'(\alpha_{\mu,n})=0$, and this point is the unique minimum of $\Phi_\mu$ on that interval. The strictly increasing property of the minimum values $\beta_{\mu,n}=\Phi_\mu(\alpha_{\mu,n})$ for $n\ge1$ was formulated as a conjecture by \'A. Baricz \cite[Conjecture 4.2]{Baricz2010} and restated by \'A. Baricz and T.K. Pog\'any \cite[Conjecture 2.3]{BariczPogany2014}. The latter paper also proves the crucial estimate $\beta_{\mu,n}<1$ for all $\mu>0$ and $n\ge1,$ see \cite[Section 2.8]{BariczPogany2014}. In this section our aim is to show that the above boundedness property of the sequence $\{\beta_{\mu,n}\}_{n\ge1}$ together with a Riccati identity, yields the conjectured monotonicity of the sequence $\left\{\beta_{\mu,n}\right\}_{n\geq1}.$ Moreover, we also show that the successive minimum value has an interesting property when the order $\mu$ varies. Figure \ref{Fig1} illustrates the result of the next theorem in the case when $\mu=2.$

\begin{theorem}\label{thm:main1}
If $\mu>0,$ then the sequence $\{\beta_{\mu,n}\}_{n\ge1}$ is strictly increasing and converges to $1$ from below. In other words, the sequence $\{\beta_{\mu,n}\}_{n\ge1}$ satisfies
\[ \frac1{\mu+1}<\beta_{\mu,1}<\beta_{\mu,2}<\ldots<\beta_{\mu,n}<\beta_{\mu,n+1}<\ldots<1\]
and consequently the following sharp Tur\'an-type inequalities are valid
$$J_\mu^2(x)-J_{\mu-1}(x)J_{\mu+1}(x)\geq\beta_{\mu,n}\cdot J_{\mu}^2(x)$$
for all $\mu>0$ and $x\in(j_{\mu,n},j_{\mu,n+1}).$ Moroever, for every fixed $n\ge1$, the function
$$\mu\mapsto\beta_{\mu,n}=\min_{j_{\mu,n}<x<j_{\mu,n+1}}\left(1-\frac{J_{\mu-1}(x)J_{\mu+1}(x)}{J_\mu^2(x)}\right)$$
is strictly decreasing on $(0,\infty)$.
\end{theorem}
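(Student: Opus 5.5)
The plan is to turn the two-sided information on $\Phi_\mu$ into an algebraic relation between the critical point $\alpha_{\mu,n}$ and the critical value $\beta_{\mu,n}$, using the Riccati equation for the logarithmic derivative. Set $g=J_\mu'/J_\mu$ on $(j_{\mu,n},j_{\mu,n+1})$. The recurrences $J_{\mu\mp1}=\frac{\mu}{x}J_\mu\pm J_\mu'$ give $J_{\mu-1}J_{\mu+1}=\frac{\mu^2}{x^2}J_\mu^2-J_\mu'^2$, so
\[
\Phi_\mu(x)=1-\frac{\mu^2}{x^2}+g^2(x).
\]
The Bessel equation gives the Riccati equation
\[
g'=-1+\frac{\mu^2}{x^2}-\frac{g}{x}-g^2 .
\]
Differentiating $\Phi_\mu$ and eliminating $g'$ and then $g^2=\Phi_\mu-1+\mu^2/x^2$, I expect
\[
\Phi_\mu'(x)=\frac{2}{x}\bigl(1-\Phi_\mu(x)\bigr)-2g(x)\Phi_\mu(x).
\]
Hence at $x=\alpha_{\mu,n}$ we get $g=(1-\beta)/(\alpha\beta)$ with $\beta=\beta_{\mu,n}$. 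Substituting this back into $\Phi_\mu=1-\mu^2/x^2+g^2$ should give the key identity
\[
\alpha_{\mu,n}^2=h_\mu(\beta_{\mu,n}),\qquad h_\mu(t)=\frac{\mu^2}{1-t}-\frac{1-t}{t^2},\quad 0<t<1 .
\]

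Monotonicity in $n$ then follows quickly. Since $\Phi_\mu>0$ and $\beta_{\mu,n}<1$ (the estimate of \cite[Section 2.8]{BariczPogany2014}), every $\beta_{\mu,n}$ lies in $(0,1)$. On this interval $h_\mu$ is strictly increasing, because both $\mu^2/(1-t)$ and $-(1-t)/t^2$ are increasing. The critical points satisfy $j_{\mu,n}<\alpha_{\mu,n}<j_{\mu,n+1}<\alpha_{\mu,n+1}$, so $\beta_{\mu,n}=h_\mu^{-1}(\alpha_{\mu,n}^2)$ is strictly increasing in $n$.

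The remaining pieces of the chain also come from $h_\mu$. Because $\alpha_{\mu,n}\ge j_{\mu,n}\to\infty$ and $h_\mu(t)\to\infty$ only as $t\to1^-$, the limit is $1$. A direct computation gives $h_\mu\bigl(1/(\mu+1)\bigr)=\mu(\mu+1)-\mu(\mu+1)=0<\alpha_{\mu,1}^2$, hence $\beta_{\mu,1}>1/(\mu+1)$. The displayed Tur\'an inequality is just the minimality of $\beta_{\mu,n}$ on each nodal interval, multiplied by $J_\mu^2$.

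The main obstacle is the strict decrease of $\mu\mapsto\beta_{\mu,n}$. The identity alone does not settle it: $h_\mu$ increases with $\mu$, but $\alpha_{\mu,n}$ also increases with $\mu$, so the two effects compete. I would use an envelope argument. Since $\alpha_{\mu,n}$ is a nondegenerate minimum ($\Phi_\mu''>0$), it depends smoothly on $\mu$, and
\[
\frac{d\beta_{\mu,n}}{d\mu}=\partial_\mu\Phi_\mu(x)\big|_{x=\alpha_{\mu,n}}=-\frac{2\mu}{\alpha^2}+2g\,u,\qquad u=\partial_\mu g .
\]
At the critical point $g=(1-\beta)/(\alpha\beta)>0$, so it suffices to prove $u(\alpha_{\mu,n})<\mu/(\alpha^2 g)=\mu\beta/\bigl(\alpha(1-\beta)\bigr)$.

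To get this, I would differentiate the Riccati equation in $\mu$. This gives the linear equation
\[
u'=\frac{2\mu}{x^2}-\Bigl(\frac1x+2g\Bigr)u,
\]
which can be solved by an integrating factor $xJ_\mu^2$. That yields the explicit formula $u(x)=\frac{2\mu}{xJ_\mu^2(x)}\int_{j_{\mu,n}}^{x}\frac{J_\mu^2(t)}{t}\,dt$, where the integration constant is fixed by the behaviour at $j_{\mu,n}$ (to be checked carefully). I would then compare this integral with the needed bound, using the monotonicity of $tJ_\mu^2$-type quantities between consecutive zeros. If that comparison proves delicate, the fallback is Skovgaard's series \eqref{eq:ML}, combined with the known increase of $j_{\mu,k}$ in $\mu$ and the Watson-type formula for $dj_{\mu,k}/d\mu$.
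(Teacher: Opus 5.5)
Your handling of the $n$-dependence is correct and is the paper's argument in another parametrization. With $\tau=\alpha_{\mu,n}g(\alpha_{\mu,n})=(1-\beta_{\mu,n})/\beta_{\mu,n}$, your identity $\alpha_{\mu,n}^2=h_\mu(\beta_{\mu,n})$ is the paper's $\alpha_{\mu,n}^2=q_\mu(\tau)$. Your observation $h_\mu(1/(\mu+1))=0$ neatly replaces the paper's appeal to Sz\'asz's inequality. The envelope reduction of the $\mu$-monotonicity to $u(\alpha_{\mu,n})<\mu\beta_{\mu,n}/(\alpha_{\mu,n}(1-\beta_{\mu,n}))$ is also right.

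The gap starts with your formula for $u=\partial_\mu g$, which is wrong. With $\omega=\partial_\mu J_\mu$ one has $xJ_\mu^2u=x(J_\mu\omega'-J_\mu'\omega)$. Differentiating $J_\mu(j_{\mu,n})=0$ in $\mu$ gives $\omega(j_{\mu,n})=-J_\mu'(j_{\mu,n})\,{\rm d}j_{\mu,n}/{\rm d}\mu$. So the boundary value at $j_{\mu,n}$ is $j_{\mu,n}\bigl(J_\mu'(j_{\mu,n})\bigr)^2{\rm d}j_{\mu,n}/{\rm d}\mu>0$, not $0$. Indeed $u\sim({\rm d}j_{\mu,n}/{\rm d}\mu)(x-j_{\mu,n})^{-2}\to+\infty$ there, while your formula forces $u\to0$. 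Your expression therefore underestimates $u(\alpha_{\mu,n})$, and since you need an \emph{upper} bound on $u$, nothing proved from it would transfer. The fix is to integrate the identity $\bigl(x(J_\mu\omega'-J_\mu'\omega)\bigr)'=2\mu J_\mu^2(x)/x$, which holds on all of $(0,\infty)$, starting from $0$. There the boundary term is $\mathcal{O}(x^{2\mu})$, and you get $u(x)=\frac{2\mu}{xJ_\mu^2(x)}\int_0^x\frac{J_\mu^2(t)}{t}{\rm d}t$. Keeping the base point $j_{\mu,n}$ with the correct boundary term also works, but then you need a bound on ${\rm d}j_{\mu,n}/{\rm d}\mu$ such as Lemma \ref{lemmacyl}.

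The second problem is that the decisive inequality is never proved. With the correct formula, your target is equivalent to
\[2\tau\int_0^{\alpha}\frac{J_\mu^2(t)}{t}{\rm d}t<J_\mu^2(\alpha),\qquad \alpha=\alpha_{\mu,n}.\]
Neither ``monotonicity of $tJ_\mu^2$-type quantities'' nor the Skovgaard fallback gives it. If you differentiate \eqref{eq:ML} in $\mu$ (using ${\rm d}j_{\mu,k}/{\rm d}\mu>0$), the terms with $j_{\mu,k}<x$ and those with $j_{\mu,k}>x$ have opposite signs, so no sign is visible. The missing idea is an energy identity with a Robin condition. Put $v(r)=J_\mu(\alpha r)$, so that $-(rv')'+\mu^2v/r=\alpha^2rv$ on $(0,1)$ and $v'(1)=\tau v(1)$. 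Multiplying by $v$, integrating by parts and completing the square yields
\[0<\int_0^1r\Bigl(v'(r)-\frac{\tau}{r}v(r)\Bigr)^2{\rm d}r=\alpha^2\sigma_2-(\mu^2-\tau^2)\sigma_1,\qquad \sigma_1=\int_0^1\frac{v^2(r)}{r}{\rm d}r,\quad \sigma_2=\int_0^1rv^2(r){\rm d}r.\]
Lommel's integral $\int_0^\alpha tJ_\mu^2(t){\rm d}t=\frac{\alpha^2}{2}\Delta_\mu(\alpha)$ gives $\sigma_2=\frac12\beta_{\mu,n}v^2(1)=v^2(1)/(2(1+\tau))$. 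Your key identity is equivalent to $\alpha^2/(\mu^2-\tau^2)=(1+\tau)/\tau$, with $\mu^2>\tau^2$ because $\alpha^2>0$. Hence $\sigma_1<v^2(1)/(2\tau)$, which is exactly the displayed inequality, since $\sigma_1=\int_0^\alpha J_\mu^2(t)t^{-1}{\rm d}t$.
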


\begin{figure}[t]
	\centering
	\includegraphics[width=0.9\textwidth]{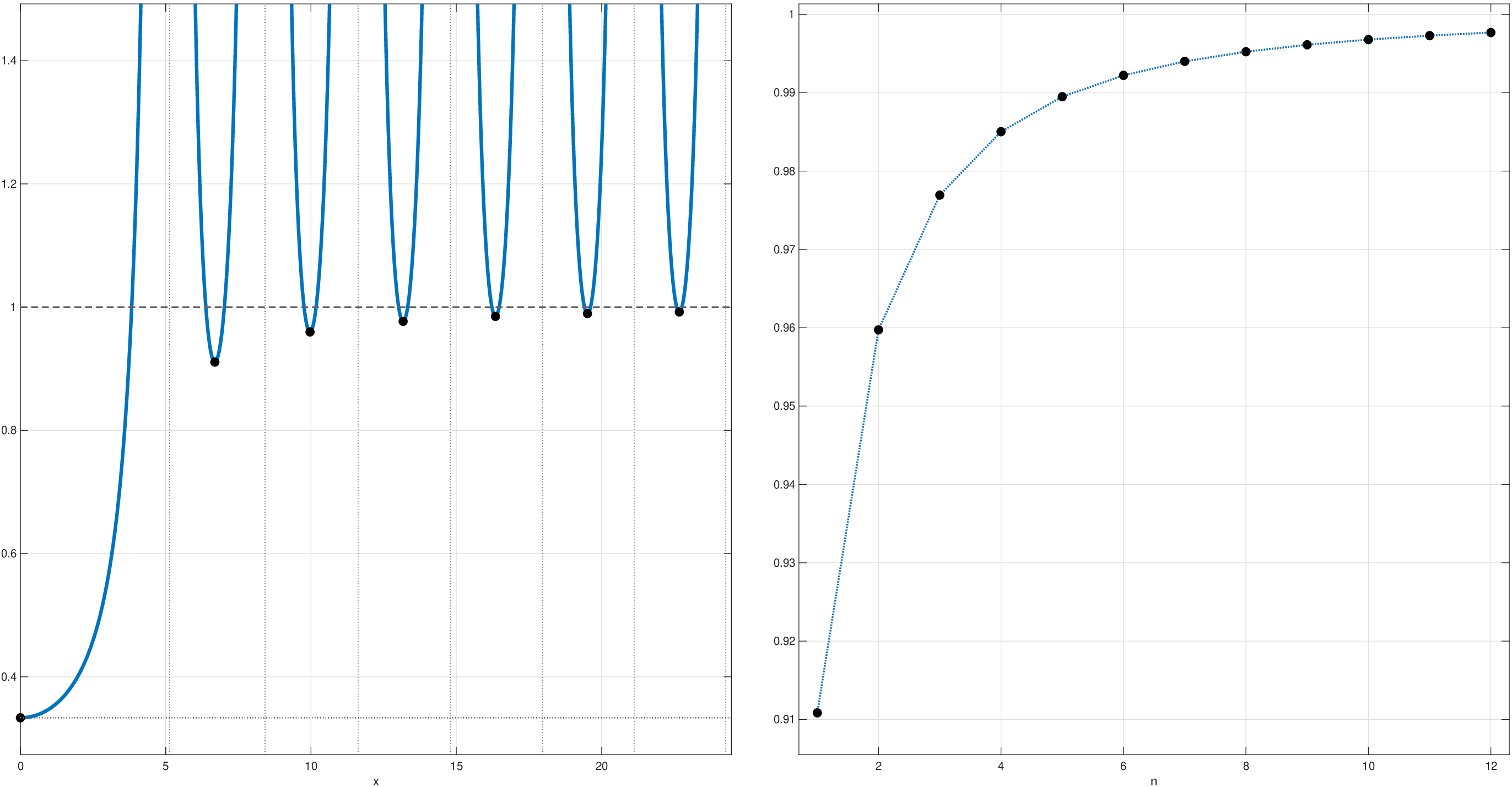}
	\caption{The graph of the first seven branches on the positive real axis of the normalized Tur\'anian $x\mapsto \Phi_{\mu}(x)$ for $\mu=2,$ together with the corresponding successive minimum value $\beta_{\mu,n}$ for $\mu=2$ and $n\in\{1,2,\ldots,12\}.$}
	\label{Fig1}
\end{figure}

\begin{proof}[\bf Proof of Theorem \ref{thm:main1}]
The standard recurrence relations \cite[eq. 10.6.2]{nist}
\[J_{\mu-1}(x)=J_\mu'(x)+\frac{\mu}{x}J_\mu(x), \qquad  J_{\mu+1}(x)=-J_\mu'(x)+\frac{\mu}{x}J_\mu(x)\]
clearly imply that
\[J_{\mu-1}(x)J_{\mu+1}(x)=\frac{\mu^2}{x^2}J_\mu^2(x)-\left[J_\mu'(x)\right)^2\]
and consequently
\begin{equation}\label{Phi:ydef}\Phi_\mu(x)=1+\frac{y_{\mu}^2(x)-\mu^2}{x^2},\quad \mbox{where}\quad y_{\mu}(x)=x\frac{J_\mu'(x)}{J_\mu(x)}.\end{equation}
On the other hand, in view of the Bessel differential equation \cite[eq. 10.2.1]{nist}
\[x^2J_\mu''(x)+xJ_\mu'(x)+(x^2-\mu^2)J_\mu(x)=0\]
we arrive at the Riccati differential equation
$$xy_{\mu}'(x)=\mu^2-x^2-y_{\mu}^2(x),$$
which in turn implies that $y_{\mu}'(x)=-x\Phi_\mu(x).$ In particular, because $\Phi_\mu(x)>0$, the function $y_{\mu}$ is strictly decreasing on every interval between two consecutive zeros of $J_\mu$. On the other hand, observe that combining the above relations we arrive at
\begin{equation}\label{Phi:der}\frac{1}{2}x\Phi_\mu'(x)=1-(1+y_{\mu}(x))\Phi_\mu(x).\end{equation}
We now use precisely the estimate supplied by \'A. Baricz and T.K. Pog\'any: in \cite[Section~2.8]{BariczPogany2014} they located the stationary point more sharply by
$\alpha_{\mu,n}\in (j_{\mu+1,n},j_{\mu-1,n+1}),$ and then used the monotonicity of $\Delta_\mu$ on this interval to obtain $\beta_{\mu,n}<1$ for all $\mu>0$ and $n\in\mathbb{N}.$ More explicitly, at $x=\alpha_{\mu,n}$ the derivative formula yields
\[ \Delta_\mu'(\alpha_{\mu,n})=\frac{2}{\alpha_{\mu,n}}J_{\mu-1}(\alpha_{\mu,n})J_{\mu+1}(\alpha_{\mu,n})>0,\]
and therefore
\[J_{\mu-1}(\alpha_{\mu,n})J_{\mu+1}(\alpha_{\mu,n})>0,\]
which is exactly $\beta_{\mu,n}<1,$ as we required. Since $\Phi_\mu'(\alpha_{\mu,n})=0$ and in view of \eqref{eq:ML} we have $\beta_{\mu,n}>0$ for all $\mu>0$ and $n\in\mathbb{N},$ it follows from \eqref{Phi:der} that the general term of the sequence $\left\{\tau_{\mu,n}\right\}_{n\geq1},$ defined by
$$\tau_{\mu,n}=y_{\mu}(\alpha_{\mu,n})=\alpha_{\mu,n}\frac{J_\mu'(\alpha_{\mu,n})}{J_\mu(\alpha_{\mu,n})},$$
satisfies $\beta_{\mu,n}\left(1+\tau_{\mu,n}\right)=1$ and $\tau_{\mu,n}>0$ for all $\mu>0$ and $n\in\mathbb{N}.$ Thus, evaluating $\Phi_\mu(x)$ in \eqref{Phi:ydef} at $x=\alpha_{\mu,n},$ we obtain
$$\alpha_{\mu,n}^2=\frac{(1+\tau_{\mu,n})(\mu^2-\tau_{\mu,n}^2)}{\tau_{\mu,n}},$$
which shows that $\tau_{\mu,n}<\mu$ for all $\mu>0$ and $n\in\mathbb{N}.$ Moreover, since the function
$$t\mapsto q_{\mu}(t)=\frac{(1+t)(\mu^2-t^2)}{t}$$
is strictly decreasing on $(0,\infty),$ the sequence $\left\{\alpha_{\mu,n}\right\}_{n\geq1}$ is increasing by definition, and $\alpha_{\mu,n}^2=q_{\mu}(\tau_{\mu,n}),$ we conclude that $\alpha_{\mu,n+1}^2=q_{\mu}(\tau_{\mu,n+1})>q_{\mu}(\tau_{\mu,n})=\alpha_{\mu,n}^2$
implies $\tau_{\mu,n+1}<\tau_{\mu,n}$ for all $\mu>0$ and $n\in\mathbb{N},$ or equivalently $\beta_{\mu,n+1}>\beta_{\mu,n}$ for all $\mu>0$ and $n\in\mathbb{N}.$ The above proof shows that $\left\{\tau_{\mu,n}\right\}_{n\geq1}$ is strictly decreasing and bounded below by zero, and hence $\tau_{\mu,n}\to \ell$ as $n\to\infty$ for some $\ell\ge0$. Since $\alpha_{\mu,n}>j_{\mu,n}$ and $j_{\mu,n}\to\infty$ as $n\to\infty$, we clearly have $\alpha_{\mu,n}\to\infty.$ However, if $\ell>0$, then by continuity of $q_\mu$ on $(0,\infty)$, we have $\alpha_{\mu,n}^2=q_\mu(\tau_{\mu,n})\to q_\mu(\ell)<\infty$ as $n\to\infty,$ which is a contradiction. Thus $\ell=0$, and $\beta_{\mu,n}\to 1$ as $n\to\infty.$

Finally, recall that Sz\'asz proved the sharp Tur\'an inequality \cite{Szasz1950}
$$J_\mu^2(x)-J_{\mu-1}(x)J_{\mu+1}(x)>\frac{1}{\mu+1}J_\mu^2(x),\quad \mu>0,$$
for the nonzero points relevant here. Hence $\beta_{\mu,n}>\frac1{\mu+1}$ for all $\mu>0$ and $n\in\mathbb{N}.$ Moreover, the small-argument expansion of $J_\mu$ shows that
$\Phi_\mu(x)\to\frac1{\mu+1}$ as $x\searrow0.$ Thus, if we define $\beta_{\mu,0}:=\frac1{\mu+1},$ then the complete statement is
$$\frac1{\mu+1}=\beta_{\mu,0}<\beta_{\mu,1}<\beta_{\mu,2}<\ldots<\beta_{\mu,n}<\beta_{\mu,n+1}<\ldots<1.$$

Now, we focus on the monotonicity of $\beta_{\mu,n}$ with respect to $\mu$ for a fixed $n\in\mathbb{N}.$ For this first we show that for $\mu>0$ and every $x>0$ such that $J_\mu(x)\ne0$, we arrive at
\begin{equation}\label{derformmu}\frac{\partial}{\partial\mu}\left(x\frac{J_\mu'(x)}{J_\mu(x)}\right)=\frac{2\mu}{J_\mu^2(x)}\int_0^x\frac{J_\mu^2(t)}{t}{\rm d}t.\end{equation}
Recall that the Bessel differential equation in self-adjoint form is
\[(xJ_{\mu}'(x))'+\left(x-\frac{\mu^2}{x}\right)J_{\mu}(x)=0.\]
Differentiating both sides of this equation with respect to $\mu$ we obtain
\[(x\omega_{\mu}'(x))'+\left(x-\frac{\mu^2}{x}\right)\omega_{\mu}(x)=\frac{2\mu}{x}J_{\mu}(x),\]
where $\omega_{\mu}(x)=\partial J_{\mu}(x)/\partial\mu.$ Now, we multiply the differentiated equation by $J_{\mu}(x)$, multiply the original
equation by $\omega_{\mu}(x)$, and we subtract the corresponding parts.  The potential terms cancel and we obtain the identity
\begin{equation}\label{eqLagr}\frac{{\rm d}}{{\rm d}x}\left(x(J_{\mu}(x)\omega_{\mu}'(x)-J_{\mu}'(x)\omega_{\mu}(x))\right)=\frac{2\mu}{x}J_{\mu}^2(x).\end{equation}
The expansion of $J_\mu(x)$ for small $x$ and its derivative with respect to the
order shows that the boundary term at the origin vanishes. Indeed, if we denote by $\epsilon_{\mu}^{-1}=2^{\mu}\Gamma(\mu+1)$ and $l(x)=\log(x/2)-\psi(\mu+1),$ where $\psi(t)=\Gamma'(t)/\Gamma(t)$ stands for the digamma function, then for small $x$ we have
$$J_{\mu}(x)=\epsilon_{\mu}x^{\mu}+\mathcal{O}(x^{\mu+2}),\quad J_{\mu}'(x)=\mu\epsilon_{\mu}x^{\mu-1}+\mathcal{O}(x^{\mu+1}),$$
$$\omega_{\mu}(x)=\epsilon_{\mu}x^{\mu}l(x)+\mathcal{O}(x^{\mu+2}|\log x|), \quad \omega_{\mu}'(x)=\epsilon_{\mu}x^{\mu}(\mu l(x)+1)+\mathcal{O}(x^{\mu+1}|\log x|)$$
and consequently as $x\to0$ we have
$$x(J_{\mu}(x)\omega_{\mu}'(x)-J_{\mu}'(x)\omega_{\mu}(x))=\frac{x^{2\mu}}{2^{2\mu}\Gamma^2(\mu+1)}(1+{o}(1)),$$
that is
$$\lim_{x\to0}\left(x(J_{\mu}(x)\omega_{\mu}'(x)-J_{\mu}'(x)\omega_{\mu}(x))\right)=0.$$
Integration from $0$ to $x$ of both sides of the equation \eqref{eqLagr}, where we originally changed $x$ to $t,$ therefore implies that
\[x\left(J_{\mu}(x)\omega_{\mu}'(x)-J_{\mu}'(x)\omega_{\mu}(x)\right)=2\mu\int_0^x\frac{J_{\mu}^2(t)}{t}{\rm d}t.\]
On the other hand, we clearly have
\[\frac{\partial y_\mu(x)}{\partial\mu}=x\frac{\omega_{\mu}'(x)J_{\mu}(x)-J_{\mu}'(x)\omega_{\mu}(x)}{J_{\mu}^2(x)},\]
which shows that the stated formula in \eqref{derformmu} is valid. We can now differentiate the minimum itself.  The point that deserves some
special attention is that the minimizer $\alpha_{\mu,n}$ also depends on $\mu$. Recall that at a minimum we have $\Phi_\mu'(\alpha_{\mu,n})=0.$ Moreover, differentiating the identity for $\Phi_\mu'$ and using again the relation $y_\mu'(x)=-x\Phi_\mu(x)$ we obtain $\Phi_\mu''(\alpha_{\mu,n})=2\beta_{\mu,n}^2>0.$ Hence the critical point is non-degenerate, and hence isolated, meaning no other critical point sits right next to it.  The implicit-function theorem therefore shows that $\alpha_{\mu,n}$ is locally a continuously differentiable function of $\mu$. Consequently, differentiating with respect to $\mu$ both sides of $\beta_{\mu,n}=\Phi_\mu(\alpha_{\mu,n})$ we arrive at
\[\frac{{\rm d}\beta_{\mu,n}}{{\rm d}\mu}=\frac{\partial\Phi_\mu}{\partial\mu}(\alpha_{\mu,n})+\Phi_\mu'(\alpha_{\mu,n})\frac{{\rm d}\alpha_{\mu,n}}{{\rm d}\mu}.\]
The second term vanishes because $\alpha_{\mu,n}$ is a critical point. Thus the motion of the minimizer does not contribute and by using the above results and notations we obtain
\begin{equation}\label{bracketpro}\frac{{\rm d}\beta_{\mu,n}}{{\rm d}\mu}=\frac{2}{\alpha_{\mu,n}^2}
\left(y_\mu(\alpha_{\mu,n})\frac{\partial y_\mu}{\partial\mu}(\alpha_{\mu,n})-\mu\right)=\frac{2\mu}{\alpha_{\mu,n}^2}
\left(\frac{2\tau_{\mu,n}}{J_{\mu}^2(\alpha_{\mu,n})}\int_0^{\alpha_{\mu,n}}\frac{J_\mu^2(t)}{t}{\rm d}t-1\right).\end{equation}
It remains to prove that the expression in brackets is negative. Now, let $u(r)=J_\mu(\alpha_{\mu,n} r),$ where $0<r<1.$ Since
$\tau_{\mu,n}=\alpha_{\mu,n}{J_{\mu}'(\alpha_{\mu,n})}/{J_\mu(\alpha_{\mu,n})},$ we have the Robin-type boundary condition $u'(1)=\tau_{\mu,n}\cdot u(1).$ On the other hand, the scaled Bessel differential equation reads $$-(ru'(r))'+\frac{\mu^2}{r}u(r)=\alpha_{\mu,n}^2\,ru(r).$$
Moreover, if we introduce the next integrals
\[\sigma_1=\int_0^1\frac{\left(u(r)\right)^2}{r}{\rm d}r\quad \mbox{and}\quad \sigma_2=\int_0^1r\left(u(r)\right)^2{\rm d}r,\]
then multiplying both sides of the above scaled Bessel differential equation by $u(r)$ and integrating by parts, we obtain
\[\alpha_{\mu,n}^2\sigma_2=\int_0^1r\left(u'(r)\right)^2{\rm d}r+\mu^2\sigma_1-\tau_{\mu,n}\left(u(1)\right)^2.\]
Thus, by completing the square
\[\int_0^1r\left(u'(r)-\frac{h}{r}u(r)\right)^2{\rm d}r=\alpha_{\mu,n}^2\sigma_2-(\mu^2-\tau_{\mu,n}^2)\sigma_1,\]
clearly the left-hand side is strictly positive and therefore for all $\mu>0$ and $r\in(0,1)$ we have
\[\sigma_1<\frac{\alpha_{\mu,n}^2}{\mu^2-\tau_{\mu,n}^2}\cdot\sigma_2=\frac{1+\tau_{\mu,n}}{\tau_{\mu,n}}\cdot\sigma_2.\]
On the other hand, by using the classical Lommel integral identity (see for example \cite[eq. 10.22.5]{nist} or \cite[p. 135]{Watson1944})
$$\int_0^xtJ_{\mu}^2(t){\rm d}t=\frac{x^2}{2}\left(J_{\mu}^2(x)-J_{\mu-1}(x)J_{\mu+1}(x)\right),$$
we arrive at
$$\sigma_2=\frac12\left(u(1)\right)^2\Phi_\mu(\alpha_{\mu,n})=\frac{\left(u(1)\right)^2}{2(1+\tau_{\mu,n})},$$
which in turn implies that $2\tau_{\mu,n}\sigma_1<\left(u(1)\right)^2$ or equivalently
$$\frac{2\tau_{\mu,n}}{J_{\mu}^2(\alpha_{\mu,n})}\int_0^{\alpha_{\mu,n}}\frac{J_\mu^2(t)}{t}{\rm d}t<1.$$
The expression in the brackets in \eqref{bracketpro} is therefore strictly negative, and with this we have shown that indeed the function $\mu\mapsto \beta_{\mu,n}$
is strictly decreasing on $(0,\infty)$ for $n\in\mathbb{N}$ fixed.
\end{proof}

\section{\bf Tur\'an type inequalities for general Bessel functions}
\setcounter{equation}{0}

For $\mu>0$ and $0\leq\alpha<\pi$, we consider the general Bessel function
\[C_\mu(x)=(\cos\alpha)J_\mu(x)-(\sin\alpha)Y_\mu(x),\]
where $Y_{\mu}$ stands for the Bessel function of the second kind. The functions $C_\mu$ satisfy the Bessel differential equation \cite[eq. 10.2.1]{nist}
$$ x^2C_\mu''(x)+xC_\mu'(x)+(x^2-\mu^2)C_\mu(x)=0$$
and the standard recurrence relations \cite[eq. 10.6.2]{nist}
$$C_\mu'(x)=C_{\mu-1}(x)-\frac{\mu}{x}C_\mu(x)=-C_{\mu+1}(x)+\frac{\mu}{x}C_\mu(x).$$
Let $c_{\mu,n}$ be the $n$th positive zero of $C_\mu$ and let us define the Tur\'anian
\[ \Delta_{\mu,\alpha}(x)=C_\mu^2(x)-C_{\mu-1}(x)C_{\mu+1}(x)\]
and its normalized quotient
$$\Phi_{\mu,\alpha}(x)=\frac{\Delta_{\mu,\alpha}(x)}{C_\mu^2(x)} =1-\frac{C_{\mu-1}(x)C_{\mu+1}(x)}{C_\mu^2(x)}, \qquad C_\mu(x)\ne0.$$
\'A. Baricz, S. Ponnusamy and S. Singh \cite{BariczPonnusamySingh2016} proved that, for $\mu>0$, $0<\alpha<\pi$, and $x\geq c_{\mu,1}$
$$\Delta_{\mu,\alpha}(x)>\frac1{\mu+1}C_\mu^2(x).$$
Consequently, on every interval $(c_{\mu,n},c_{\mu,n+1})$, $n\ge1$ we have
$$\Phi_{\mu,\alpha}(x)>\frac1{\mu+1}>0$$
and it is natural to ask whether the approach applied in the previous section for the Bessel functions of the first kind can be applied for the general Bessel functions. It turns out that the idea is working well also in the general case and the above positivity result is the only Tur\'an type inequality from the literature needed below. Observe that for $\alpha=0,$ Theorem \ref{thm:main2} reduces to Theorem \ref{thm:main1}, and in the case when $\alpha=\pi/2,$ Theorem \ref{thm:main2} may be written in terms of Bessel functions of the second kind $Y_{\mu},$ which complements and improves the results stated in \cite[Section 5.1]{BariczPogany2014}. It is worth to mention here that on a nodal interval whose left endpoint $c_{\mu,n}$ satisfies $c_{\mu,n}>\mu$, the unique minimum value of $\Phi_{\mu,\alpha}$ is strictly decreasing with respect to the order $\mu$. The proof, which is presented below, is a companion to the corresponding result for $J_\mu$ and the main difference is that a general cylinder function need not be regular at the origin, so the order-derivative identity must be integrated from the left nodal zero. The resulting boundary term is controlled by a classical order derivative formula for zeros of cylinder functions.

\begin{theorem}\label{thm:main2}
Let $\mu>0$, $0\leq\alpha<\pi.$ For every $n\ge1$, let $a_{\mu,n}$ be the unique critical point of $\Phi_{\mu,\alpha}(x)$ in $(c_{\mu,n},c_{\mu,n+1})$, and denote
$b_{\mu,n}=\Phi_{\mu,\alpha}(a_{\mu,n}).$ The sequence $\{b_{\mu,n}\}_{n\ge1}$ is strictly increasing and converges to $1$ from below. In other words, the sequence $\{b_{\mu,n}\}_{n\ge1}$ satisfies
\[ \frac1{\mu+1}<b_{\mu,1}<b_{\mu,2}<\ldots<b_{\mu,n}<b_{\mu,n+1}<\ldots<1\]
and consequently the following sharp Tur\'an-type inequalities are valid
$$C_\mu^2(x)-C_{\mu-1}(x)C_{\mu+1}(x)\geq b_{\mu,n}\cdot C_{\mu}^2(x)$$
for all $\mu>0,$ $0\leq\alpha<\pi$ and $x\in(c_{\mu,n},c_{\mu,n+1}).$ Moreover, for a fixed phase $\alpha\in[0,\pi)$ and a nodal index $n\geq1$ on every interval of orders
$\mu>0$ for which $c_{\mu,n}>\mu,$ the successive minimum
\[b_{\mu,n}=\min_{c_{\mu,n}<x<c_{\mu,n+1}}\left(1-\frac{C_{\mu-1}(x)C_{\mu+1}(x)}{C_\mu^2(x)}\right)\]
is a strictly decreasing function of $\mu$ on $(0,\infty)$.
\end{theorem}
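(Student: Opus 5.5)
The plan is to repeat the proof of Theorem \ref{thm:main1} with $J_\mu$ replaced by $C_\mu$. Every identity used there is a consequence of the recurrence relations and the Bessel equation, and $C_\mu$ satisfies both. Set $y(x)=xC_\mu'(x)/C_\mu(x)$. The recurrences give
\[\Phi_{\mu,\alpha}(x)=1+\frac{y^2(x)-\mu^2}{x^2},\]
the Riccati equation gives $y'=-x\Phi_{\mu,\alpha}$, and as before
\[\tfrac12 x\Phi_{\mu,\alpha}'=1-(1+y)\Phi_{\mu,\alpha}.\]
The lower bound $\Phi_{\mu,\alpha}>1/(\mu+1)>0$ on $(c_{\mu,n},c_{\mu,n+1})$ comes from \cite{BariczPonnusamySingh2016}. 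Since $\Phi_{\mu,\alpha}\to+\infty$ at both endpoints, a minimum is attained there.

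For uniqueness of the critical point, note that at any critical point $\Phi''=2\Phi^2>0$, obtained by differentiating the formula for $\Phi'$ and using $y'=-x\Phi$. So every critical point is a strict local minimum, and hence there is exactly one, namely $a_{\mu,n}$. At $a_{\mu,n}$ we have $b(1+t)=1$ with $t=t_{\mu,n}=y(a_{\mu,n})>-1$, and the first display gives
\[a_{\mu,n}^2=q_\mu(t)=\frac{(1+t)(\mu^2-t^2)}{t}.\]

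To get $b<1$, i.e.\ $t>0$, I would use the Tur\'anian derivative $\Delta'_{\mu,\alpha}=\tfrac{2}{x}C_{\mu-1}C_{\mu+1}$, which follows from the same recurrences. Then
\[\Phi'=\frac{\Delta'}{C_\mu^2}-\frac{2\Delta C_\mu'}{C_\mu^3},\]
so at $a_{\mu,n}$ we get $C_{\mu-1}C_{\mu+1}=y\Delta$. Writing $C_{\mu-1}C_{\mu+1}=(1-b)C_\mu^2$ and $\Delta=bC_\mu^2$ gives $1-b=tb$, which is consistent but not decisive. I therefore argue directly. If $t\le 0$, then $q_\mu(t)\ge0$ forces $-1<t<0$ or $t\le-\mu$; together with $t>-1$ we are in the case $-1<t<0$. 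Then $b=1/(1+t)>1$, and $a^2=q_\mu(t)$ is negative when $t\in(-1,0)$ and $\mu>|t|$; when $\mu\le|t|<1$, it is what remains to exclude. So the case $t\le0$ cannot occur except possibly when $\mu<1$, and there I would fall back on the zero-location argument of \cite[Section 2.8]{BariczPogany2014}, adapted to the interlacing of the zeros of $C_{\mu\pm1}$ and $C_\mu$. With $t>0$ established, the function $q_\mu$ is strictly decreasing on $(0,\infty)$ and $a_{\mu,n}$ is increasing in $n$. Hence $t_{\mu,n}$ strictly decreases, $b_{\mu,n}$ strictly increases, and since $a_{\mu,n}\to\infty$ we get $t_{\mu,n}\to0$ and $b_{\mu,n}\to1$. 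The bound $b_{\mu,1}>1/(\mu+1)$ is the cited inequality, and the sharp Tur\'an inequality is just the definition of the minimum.

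For the monotonicity in $\mu$, the envelope argument again gives
\[\frac{db}{d\mu}=\frac{2}{a^2}\bigl(t\,\partial_\mu y-\mu\bigr),\]
with $a_{\mu,n}$ differentiable in $\mu$ by the implicit function theorem, since the critical point is nondegenerate. The Lagrange identity is now integrated from $c=c_{\mu,n}$ instead of from $0$:
\[a\bigl(C_\mu\omega'-C_\mu'\omega\bigr)\Big|_{c}^{a}=2\mu\int_c^a\frac{C_\mu^2}{s}\,ds,\qquad \omega=\partial_\mu C_\mu.\]
At the zero $c$ the left-hand boundary term equals $-cC_\mu'(c)\omega(c)$. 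Because $\omega(c)=-C_\mu'(c)\,dc/d\mu$, this becomes $c\,(C_\mu'(c))^2\,dc/d\mu$. Here I would use the classical order-derivative formula for zeros of cylinder functions: $dc/d\mu>0$ when $c>\mu$ (Watson's formula). This makes the boundary contribution to $\partial_\mu y$ of the favourable sign, since it enters with a minus, so
\[\partial_\mu y(a)\le\frac{2\mu}{C_\mu^2(a)}\int_c^a\frac{C_\mu^2}{s}\,ds.\]

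It remains to show $2t\sigma_1<u(1)^2$, where $u(r)$ is the rescaled $C_\mu$ on $[c/a,1]$ and $\sigma_1,\sigma_2$ are the corresponding integrals. This follows by the same completing-the-square energy argument as before, now with the Dirichlet condition $u(c/a)=0$ at the left end, so no boundary term arises there. The Lommel integral $\int_c^a sC_\mu^2\,ds=\tfrac{a^2}{2}\Delta(a)-\tfrac{c^2}{2}\Delta(c)$ is also needed. Its boundary term at $c$ equals $-\tfrac{c^2}{2}(C_\mu'(c))^2\le0$, which again helps, giving $\sigma_2\le u(1)^2/(2(1+t))$. The main obstacle I expect is this sign bookkeeping of the two boundary terms at $c_{\mu,n}$, in particular verifying the correct form of Watson's formula and the hypothesis $c_{\mu,n}>\mu$ under which it gives $dc/d\mu>0$, together with the $t>0$ step for small $\mu$.
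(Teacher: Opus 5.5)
Your treatment of the boundary term at $c=c_{\mu,n}$ in the Lagrange identity has the wrong sign, and this is exactly where the difficulty of the theorem lies. With $W(x)=x\left(C_\mu\omega'-C_\mu'\omega\right)(x)$ you correctly get $W(c)=c\,(C_\mu'(c))^2\,{\rm d}c/{\rm d}\mu$. However, solving $W(a)-W(c)=2\mu\int_c^a C_\mu^2(s)s^{-1}{\rm d}s$ for $W(a)=C_\mu^2(a)\,\partial_\mu y(a)$ gives
\[C_\mu^2(a)\,\partial_\mu y(a)=c\,(C_\mu'(c))^2\frac{{\rm d}c}{{\rm d}\mu}+2\mu\int_c^a\frac{C_\mu^2(s)}{s}\,{\rm d}s,\]
so the boundary term enters $\partial_\mu y(a)$ with a plus sign. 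Since ${\rm d}c/{\rm d}\mu>0$, it pushes $t\,\partial_\mu y(a)$ upwards, which is the unfavourable direction. Your displayed bound on $\partial_\mu y(a)$ in fact holds in reverse.

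A warning sign is that your argument never uses the hypothesis $c_{\mu,n}>\mu$, since Watson's formula gives ${\rm d}c/{\rm d}\mu>0$ for every $\mu>0$. What is really needed is an \emph{upper} bound for ${\rm d}c/{\rm d}\mu$. The favourable Lommel boundary term $-\frac{c^2}{2}(C_\mu'(c))^2$ must also be kept, to absorb the unfavourable term, rather than discarded. Using $\int_c^a C_\mu^2(s)s^{-1}{\rm d}s<(\mu^2-t^2)^{-1}\int_c^a sC_\mu^2(s){\rm d}s$, the Lommel identity and $a^2b=(\mu^2-t^2)/t$, one gets
\[t\,\partial_\mu y(a)<\mu+\frac{t\,c\,(C_\mu'(c))^2}{C_\mu^2(a)}\left(\frac{{\rm d}c}{{\rm d}\mu}-\frac{\mu c}{\mu^2-t^2}\right).\]
So one needs ${\rm d}c/{\rm d}\mu<\mu c/(\mu^2-t^2)$. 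Since $0<t<\mu$, this follows from Lorch's inequality ${\rm d}c/{\rm d}\mu<c/\mu$. The paper proves this inequality as Lemma~\ref{lemmacyl}, starting from Watson's integral $\frac{{\rm d}c}{{\rm d}\mu}=2c\int_0^\infty K_0(2c\sinh s)e^{-2\mu s}{\rm d}s$ and using $\sinh s>s$, the $\arccos$ form of the Laplace transform of $K_0$, and $\tan\theta>\theta$. The last two steps are precisely where $c>\mu$ is used. Without this ingredient, the monotonicity in $\mu$ is not established.

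The step $t>0$, i.e.\ $b_{\mu,n}<1$, is also left open for $0<\mu<1$, but it needs no adaptation of the Baricz--Pog\'any interlacing argument. Apply Rolle's theorem to $g(x)=x^{-\mu}C_\mu(x)$, whose derivative is $-x^{-\mu}C_{\mu+1}(x)$, on $[c_{\mu,n},c_{\mu,n+1}]$. This gives a point $\xi$ with $C_{\mu+1}(\xi)=0$. There $y(\xi)=\mu$, hence $\Phi_{\mu,\alpha}(\xi)=1$ and $\Phi_{\mu,\alpha}'(\xi)=-2\mu/\xi<0$. So $\xi$ is not the minimizer, and $b_{\mu,n}<\Phi_{\mu,\alpha}(\xi)=1$ for every $\mu>0$.

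The rest of your outline agrees with the paper's proof: uniqueness via $\Phi''=2\Phi^2$ at critical points, monotonicity in $n$ through $q_\mu$, the limit, and the energy estimate with the Dirichlet condition at $c$.
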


\begin{remark}
{\em It is important to remark here that the condition $c_{\mu,n}>\mu$ is automatic in the case when $\alpha=0,$ that is $C_\mu=J_\mu$, and thus the above theorem recovers the previously proved monotonicity of $\beta_{\mu,n}$ with respect to the order. Moreover, the result on the monotonicity of the successive minimum it also applies to the case when $C_\mu=Y_\mu$. More generally, M.E. Muldoon and R. Spigler \cite{MuldoonSpigler1984} proved that, for $0\le\alpha\le{5\pi}/{6}$ the cylinder function $x\mapsto C_{\mu}(x)$ has no positive zero in $0<x\le\mu$.  Hence, in this phase range, $c_{\mu,n}>\mu$ for every positive zero and every $\mu>0$, and the above theorem says that ${\rm d}b_{\mu,n}/{\rm d}\mu<0$ for every nodal index $n$. For phases outside this range, a low positive zero may lie below the turning point and no assertion is made here for an exceptional low interval lying partly in $0<x\le\mu$.}
\end{remark}

For the general cylinder function the essential new ingredient in the proof of Theorem \ref{thm:main2} is a bound
for the variation of a positive zero with respect to the order. This result has been proved by L. Lorch \cite[p. 138]{Lorch1990}, however, for the sake of completeness, we present a slightly different proof below.

\begin{lemma}\label{lemmacyl}
Let $c_{\mu,n}$ be a positive zero of a fixed-phase $\alpha\in[0,\pi)$ cylinder function
$C_{\mu}(x)$ and fixed integer $n\geq1.$ If we suppose that $c_{\mu,n}>\mu>0,$ then $0<{\rm d}c_{\mu,n}/{\rm d}\mu<{c_{\mu,n}}/{\mu}.$ Equivalently, $\mu\mapsto c_{\mu,n}/\mu$ is strictly decreasing as long as the zero stays to the right of the turning point.
\end{lemma}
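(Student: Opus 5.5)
The plan is to work from an explicit integral representation for the order derivative of a zero of a cylinder function, rather than from the Lagrange-type identity \eqref{eqLagr}. The reason is that $C_\mu$ is in general singular at the origin, so integrating \eqref{eqLagr} from $0$ is not available, as the discussion before Theorem \ref{thm:main2} already notes. The tool I would invoke is the classical Watson formula for the zeros of a fixed-phase cylinder function (Watson's treatise \cite{Watson1944}, \S15.6):
\[
\frac{{\rm d}c_{\mu,n}}{{\rm d}\mu}=2c_{\mu,n}\int_0^\infty K_0\bigl(2c_{\mu,n}\sinh t\bigr)e^{-2\mu t}\,{\rm d}t ,
\]
valid for $\mu>0$, where $K_0$ is the Macdonald function. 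It is derived from the Nicholson-type integral for $J_\mu^2+Y_\mu^2$ together with implicit differentiation of $C_\mu(c_{\mu,n})=0$. Since $K_0>0$ on $(0,\infty)$, the lower bound ${\rm d}c_{\mu,n}/{\rm d}\mu>0$ follows at once, and it needs no hypothesis on the location of the zero.

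For the upper bound, write $c=c_{\mu,n}$. We must show $2\mu\int_0^\infty K_0(2c\sinh t)e^{-2\mu t}\,{\rm d}t<1$. The key steps are as follows.
\begin{itemize}
\item Since $\sinh t>t$ for $t>0$ and $K_0$ is strictly decreasing on $(0,\infty)$, we have $K_0(2c\sinh t)<K_0(2ct)$.
\item This gives the strict bound
\[
2\mu\int_0^\infty K_0(2c\sinh t)e^{-2\mu t}\,{\rm d}t<2\mu\int_0^\infty K_0(2ct)e^{-2\mu t}\,{\rm d}t .
\]
\item The right-hand side is a classical Laplace transform: $\int_0^\infty K_0(at)e^{-pt}\,{\rm d}t=\arccos(p/a)/\sqrt{a^2-p^2}$ for $0<p<a$.
\item Here $p=2\mu$ and $a=2c$, and $p<a$ holds precisely because of the hypothesis $c>\mu$. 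This is exactly where the condition that the zero lies to the right of the turning point enters.
\item Evaluating the transform, the right-hand side equals $\mu\arccos(\mu/c)/\sqrt{c^2-\mu^2}$.
\end{itemize}

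Now put $\mu/c=\cos\theta$ with $\theta\in(0,\pi/2)$. The last expression becomes $\theta\cos\theta/\sin\theta=\theta/\tan\theta$, which is strictly less than $1$ because $\tan\theta>\theta$ on $(0,\pi/2)$. This gives ${\rm d}c_{\mu,n}/{\rm d}\mu<c_{\mu,n}/\mu$. The equivalent formulation then follows from
\[
\frac{{\rm d}}{{\rm d}\mu}\frac{c_{\mu,n}}{\mu}=\frac{\mu\,{\rm d}c_{\mu,n}/{\rm d}\mu-c_{\mu,n}}{\mu^2}<0 .
\]
Differentiability of $\mu\mapsto c_{\mu,n}$ comes from the implicit function theorem, since zeros of $C_\mu$ are simple ($C_\mu$ and $C_\mu'$ cannot vanish simultaneously by the Bessel equation).

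The main obstacle I expect is justifying Watson's formula for an arbitrary phase $\alpha$ with $n$ fixed, and in particular that it holds along a continuous branch of zeros as $\mu$ varies. If I want to avoid quoting it, I would derive it directly. First, implicit differentiation gives ${\rm d}c/{\rm d}\mu=-\omega_\mu(c)/C_\mu'(c)$, where $\omega_\mu=\partial C_\mu/\partial\mu$. Second, I would use the Nicholson formula $J_\mu^2(x)+Y_\mu^2(x)=\frac{8}{\pi^2}\int_0^\infty K_0(2x\sinh t)\cosh(2\mu t)\,{\rm d}t$, together with its phase-function version, to express $\omega_\mu(c)C_\mu'(c)$ in terms of $\int_0^\infty K_0(2c\sinh t)e^{-2\mu t}\,{\rm d}t$.

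Should that route stall, the fallback is to integrate \eqref{eqLagr} from $c$ to $\infty$ instead of from $0$. With $F(x)=x\bigl(C_\mu\omega_\mu'-C_\mu'\omega_\mu\bigr)$, the large-$x$ asymptotics give $F(\infty)=1$, and $F(c)=-cC_\mu'(c)\omega_\mu(c)$. This yields
\[
\frac{{\rm d}c_{\mu,n}}{{\rm d}\mu}=\frac{1-2\mu\int_c^\infty C_\mu^2(t)t^{-1}\,{\rm d}t}{c\,(C_\mu'(c))^2},
\]
with the improper integral understood as a conditionally convergent limit. Both required inequalities would then be estimated from this formula, but bounding that oscillatory integral looks harder than the $K_0$ argument, so I would try the Watson route first.
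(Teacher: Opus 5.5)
Your main route is the paper's proof: Watson's formula (NIST 10.21.17) for ${\rm d}c_{\mu,n}/{\rm d}\mu$, the bound $K_0(2c\sinh t)<K_0(2ct)$ from $\sinh t>t$, the Laplace transform of $K_0$ (which is where $c_{\mu,n}>\mu$ enters), and $\theta\cot\theta<1$ on $(0,\pi/2)$. Your computation is correct, and the positivity of $K_0$ gives the lower bound ${\rm d}c_{\mu,n}/{\rm d}\mu>0$, as you note.
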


\begin{proof}[\bf Proof of Lemma \ref{lemmacyl}]
By using the classical formula \cite[eq. 10.21.17]{nist}
\[\frac{{\rm d}c_{\mu,n}}{{\rm d}\mu}=2c_{\mu,n}\int_0^\infty K_0(2c_{\mu,n}\sinh t)e^{-2\mu t}{\rm d}t\]
and the integral representation of the modified Bessel function of the second kind $K_0$
\[K_0(x)=\int_0^\infty e^{-x\cosh s}{\rm d}s, \quad x>0,\]
the elementary inequality $\sinh t>t$ for $t>0$ implies that
\[K_0(2c_{\mu,n}\sinh t)<K_0(2c_{\mu,n}t)=\int_0^\infty e^{-2(c_{\mu,n}\cdot\cosh s)t}{\rm d}s.\]
Thus, we arrive at
\[\frac{1}{c_{\mu,n}}\frac{{\rm d}c_{\mu,n}}{{\rm d}\mu}<2\int_0^{\infty}e^{-2\mu t}K_0(2c_{\mu,n}t){\rm d}t.\]
Now, since $c_{\mu,n}>\mu,$ the Laplace transform
$$\int_0^{\infty}e^{-at}K_0(bt){\rm d}t=\frac{\arccos(a/b)}{\sqrt{b^2-a^2}},\quad 0<a<b,$$
implies that
\[\frac{1}{c_{\mu,n}}\frac{{\rm d}c_{\mu,n}}{{\rm d}\mu}<\frac{\arccos(\mu/c_{\mu,n})}{\sqrt{c_{\mu,n}^2-\mu^2}}.\]
On the other hand, if $\alpha=\arccos(\mu/c_{\mu,n}),$ $\alpha\in(0,\pi/2),$ then we have $\tan\alpha>\alpha$ or equivalently
$$\alpha\cot\alpha=\frac{\mu\arccos(\mu/c_{\mu,n})}{\sqrt{c_{\mu,n}^2-\mu^2}}<1,$$
which completes the proof.
\end{proof}

\begin{proof}[\bf Proof of Theorem \ref{thm:main2}]
By using a similar approach as in the previous section we can easily show that
$$\Phi_{\mu,\alpha}(x)=1+\frac{y_{\mu,\alpha}^2(x)-\mu^2}{x^2},\quad \mbox{where}\quad y_{\mu,\alpha}(x)=x\frac{C_\mu'(x)}{C_\mu(x)},$$
satisfies $y_{\mu,\alpha}'(x)=-x\Phi_{\mu,\alpha}(x)$ as well as the relation
$$\frac{1}{2}x\Phi_{\mu,\alpha}'(x)=1-(1+y_{\mu}(x))\Phi_{\mu,\alpha}(x).$$
Thus, at a critical point $a>0$ of $\Phi_{\mu,\alpha}$ we therefore have $\Phi_{\mu,\alpha}(a)\left(1+y_{\mu,\alpha}(a)\right)=1$ and a second differentiation, evaluated at a critical point, gives the useful identity $\Phi_{\mu,\alpha}''(a)=2\Phi_{\mu,\alpha}^2(a)>0.$ Thus every critical point is a strict local minimum. At a zero $c=c_{\mu,n}$ of $C_\mu$, the recurrence relation
\[ C_{\mu-1}(c)+C_{\mu+1}(c)=\frac{2\mu}{c}C_\mu(c)=0\]
gives $C_{\mu-1}(c)=-C_{\mu+1}(c)$. Moreover, the simplicity of the zero implies $C_{\mu+1}(c)\ne0$. Hence $\Delta_\mu(c)=C_{\mu+1}^2(c)>0,$ and consequently
$\Phi_{\mu,\alpha}(x)\to+\infty$ as $x\to c_{\mu,n}^{+}$ or $x\to c_{\mu,n+1}^{-}.$ Therefore $\Phi_{\mu,\alpha}$ attains a minimum at some interior point of every interval $(c_{\mu,n},c_{\mu,n+1})$. Such a point is critical. Now, since every critical point is a strict local minimum, there cannot be two distinct critical points in the same interval: two distinct strict local minima would force an interior local maximum between them, hence another critical point at which the second derivative could not be strictly positive. Thus there is exactly one critical point, denoted by $a_{\mu,n}$.

Now, we are going to show that for every $\mu>0$, $0\leq\alpha<\pi$ and $n\ge1$ we have $b_{\mu,n}<1,$ where $b_{\mu,n}=\Phi_{\mu,\alpha}(a_{\mu,n}).$ Consider
$g_{\mu}(x)=x^{-\mu}C_\mu(x).$ Then, in view of \cite[eq. 10.6.6]{nist}, we clearly have $g_{\mu}'(x)=-x^{-\mu}C_{\mu+1}(x)$ and since $g_{\mu}(c_{\mu,n})=g_{\mu}(c_{\mu,n+1})=0,$ Rolle's theorem yields a point
$\xi_{\mu,n}\in(c_{\mu,n},c_{\mu,n+1})$ for which $g_{\mu}'(\xi_{\mu,n})=0$. Thus $C_{\mu+1}(\xi_{\mu,n})=0$ and $\Phi_{\mu,\alpha}(\xi_{\mu,n})=1.$ Since $a_{\mu,n}$ is the unique global minimum in the interval, we clearly have that $b_{\mu,n}\le1.$ Moreover, the inequality is strict. Indeed, at a zero of $C_{\mu+1}$ the corresponding recurrence relation implies that
\[y_{\mu,\alpha}(\xi_{\mu,n})=\xi_{\mu,n}\frac{C_\mu'(\xi_{\mu,n})}{C_\mu(\xi_{\mu,n})}=\mu.\]
and
\[\Phi_{\mu,\alpha}'(\xi_{\mu,n})=\frac2{\xi_{\mu,n}}[1-(1+\mu)]=-\frac{2\mu}{\xi_{\mu,n}}<0.\]
Thus $\xi_{\mu,n}$ is not the minimizer $a_{\mu,n}$, and hence $b_{\mu,n}<\Phi_{\mu,\alpha}(\xi_{\mu,n})=1.$

The rest of the proof of the monotonicity of the sequence $\{b_{\mu,n}\}_{n\geq1}$ go along the lines introduced in the proof of Theorem \ref{thm:main1}. Namely, if we consider the sequence $\varsigma_{\mu,n}=y_{\mu,\alpha}(a_{\mu,n}),$ then we clearly have that $b_{\mu,n}\left(1+\varsigma_{\mu,n}\right)=1$ and $0<\varsigma_{\mu,n}<\mu$ for all $\mu>0,$ $0\leq\alpha<\pi$ and $n\geq1.$ Moreover, since we have
\begin{equation}\label{eqamun}a_{\mu,n}^2=\frac{(1+\varsigma_{\mu,n})(\mu^2-\varsigma_{\mu,n}^2)}{\varsigma_{\mu,n}},\end{equation}
the monotonicity of the sequence $\{b_{\mu,n}\}_{n\ge1}$ follows from the monotonicity of the sequence $\{a_{\mu,n}\}_{n\ge1}$ and the monotonicity of the function $q_{\mu},$ defined in the proof of Theorem \ref{thm:main1}. Namely, $a_{\mu,n+1}^2=q_{\mu}(\varsigma_{\mu,n+1})>q_{\mu}(\varsigma_{\mu,n})=a_{\mu,n}^2$
implies $\varsigma_{\mu,n+1}<\varsigma_{\mu,n}$ for all $\mu>0,$ $0\leq\alpha<\pi$ and $n\in\mathbb{N},$ or equivalently $b_{\mu,n+1}>b_{\mu,n}$ for all $\mu>0,$ $0\leq\alpha<\pi$ and $n\in\mathbb{N}.$

Now, we focus on the monotonicity of $\mu\mapsto b_{\mu,n}$ for $\alpha\in[0,\pi)$ and $n\in\mathbb{N}$ fixed. Because the minimum is non-degenerate,
$a_{\mu,n}$ depends smoothly on $\mu$, locally in every parameter range where the same nodal branch is followed. Differentiating both sides of $b_{\mu,n}=\Phi_{\mu,\alpha}(a_{\mu,n})$ we obtain ${{\rm d}b_{\mu,n}}/{{\rm d}\mu}={\partial\Phi_{\mu,\alpha}}(a_{\mu,n})/{\partial\mu},$ because $\Phi_{\mu,\alpha}'(a_{\mu,n})=0.$ Moreover, at fixed $x$ we have
\[\frac{\partial\Phi_{\mu,\alpha}(x)}{\partial\mu}=\frac{2}{x^2}\left(y_{\mu,\alpha}(x)\frac{\partial y_{\mu,\alpha}(x)}{\partial\mu}-\mu\right)\]
and thus it is enough to show that, at the minimum
\[\varsigma_{\mu,n}\frac{\partial y_{\mu,\alpha}}{\partial\mu}(a_{\mu,n})<\mu.\]
Following the steps in the proof of Theorem \ref{thm:main1}, that is differentiating both sides of the Bessel differential equation with respect to $\mu$ and subtracting
the original equation multiplied by $\omega_{\mu,\alpha}(x)={\partial C_\mu(x)}/{\partial\mu},$ we arrive to the identity
\begin{equation}\label{Lident}\frac{{\rm d}}{{\rm d}x}\left(x(C_{\mu}(x)\omega_{\mu,\alpha}'(x)-C_{\mu}'(x)\omega_{\mu,\alpha}(x))\right)=\frac{2\mu}{x}C_{\mu}^2(x).\end{equation}
For $J_\nu$ we integrated this identity from the origin, where the boundary term vanished. This is the step which is no longer available for
a general cylinder function. Instead, we are going to integrate from the left zero $c_{\mu,n}$. Now, differentiating both sides of $C_\mu(c_{\mu,n})=0$ with respect to $\mu,$ we obtain $\omega_{\mu,\alpha}(c_{\mu,n})=-C_{\mu}'(c_{\mu,n}){{\rm d}c_{\mu,n}}/{{\rm d}\mu}.$ On the other hand, we clearly have
$$\frac{\partial y_{\mu,\alpha}}{\partial\mu}(x)=x\frac{C_{\mu}(x)\omega_{\mu,\alpha}'(x)-C_{\mu}'(x)\omega_{\mu,\alpha}(x)}{C_{\mu}^2(x)}$$
and thus, integration of both sides of the identity \eqref{Lident} from $c_{\mu,n}$ to $a_{\mu,n}$ therefore yields
\begin{equation}\label{eqcomplCPar}C_{\mu}^2(a_{\mu,n})\frac{\partial y_{\mu,\alpha}}{\partial\mu}(a_{\mu,n})=c_{\mu,n}\left(C_{\mu}'(c_{\mu,n})\right)^2\frac{{\rm d}c_{\mu,n}}{{\rm d}\mu}+
2\mu\int_{c_{\mu,n}}^{a_{\mu,n}}\frac{C_{\mu}^2(t)}{t}{\rm d}t.\end{equation}
Now, we focus on the integral on the right-hand side of the above relation. For this we consider the integrals
\[\pi_1=\int_{c_{\mu,n}}^{a_{\mu,n}}\frac{C_{\mu}^2(t)}{t}{\rm d}t\quad \mbox{and}\quad \pi_2=\int_{c_{\mu,n}}^{a_{\mu,n}}tC_{\mu}^2(t){\rm d}t.\]
Multiplication of the Bessel differential equation by $C_{\mu}$ and integration by parts, followed by completion of the square, implies that
\[0<\int_{c_{\mu,n}}^{a_{\mu,n}}t\left(C_{\mu}'(xt)-\frac{\varsigma_{\mu,n}}{t}C_{\mu}(t)\right)^2{\rm d}t=\pi_2-(\mu^2-\varsigma_{\mu,n}^2)\pi_1.\]
On the other hand, recall that the Lommel integral identity (see for example \cite[eq. 10.22.5]{nist} or \cite[p. 135]{Watson1944}) is valid for every cylinder function and thus
\[\int tC_\mu^2(t){\rm d}t=\frac{t^2}{2}\left(C_\mu^2(t)-C_{\mu-1}(t)C_{\mu+1}(t)\right).\]
Moreover, in view of the corresponding recurrence relations we have $C_{\mu-1}(c_{\mu,n})=C_{\mu}'(c_{\mu,n}),$ $C_{\mu+1}(c_{\mu,n})=-C_{\mu}'(c_{\mu,n})$ and at the minimum $a_{\mu,n}$ we arrive at
\[C_{\mu}^2(a_{\mu,n})-C_{\nu-1}(a_{\mu,n})C_{\nu+1}(a_{\mu,n})=b_{\mu,n}C_{\mu}^2(a_{\mu,n})\]
and consequently
\[\pi_2=\frac{a_{\mu,n}^2}{2}b_{\nu,n}C_{\mu}^2(a_{\mu,n})-\frac{c_{\mu,n}^2}{2}\left(C_{\mu}'(c_{\mu,n})\right)^2.\]
Now, by using the identity $b_{\mu,n}(1+\varsigma_{\mu,n})=1$ and the stationary relation \eqref{eqamun}, we obtain that
$$\pi_1<\frac{C_{\mu}^2(a_{\mu,n})}{2\varsigma_{\mu,n}}-\frac{c_{\mu,n}^2\left(C_{\mu}'(c_{\mu,n})\right)^2}{2(\mu^2-\varsigma_{\mu,n}^2)},$$
and combining this with the identity \eqref{eqcomplCPar} yields
\[\varsigma_{\mu,n}\frac{\partial y_\nu}{\partial\nu}(a_{\mu,n})
<\mu+\frac{\varsigma_{\mu,n}c_{\mu,n}\left(C_{\mu}'(c_{\mu,n})\right)^2}{C_{\mu}^2(a_{\mu,n})}
\left(\frac{{\rm d}c_{\mu,n}}{{\rm d}\mu}-\frac{\mu c_{\mu,n}}{\mu^2-\varsigma_{\mu,n}^2}\right).\]
Finally, in view of Lemma \ref{lemmacyl} we clearly have ${{\rm d}c_{\mu,n}}/{{\rm d}\mu}<{c_{\mu,n}}/{\mu}<\mu c_{\mu,n}/(\mu^2-\varsigma_{\mu,n}^2)$ and because $0<\varsigma_{\mu,n}<\mu,$ it follows that at the unique minimum
\[\frac{{\rm d}b_{\mu,n}}{{\rm d}\mu}=\frac{2}{a_{\mu,n}^2}\left(\varsigma_{\mu,n}\frac{\partial y_{\mu,\alpha}}{\partial\mu}(a_{\mu,n})-\mu\right)<0,\]
which completes the proof.
\end{proof}

\section{\bf Tur\'an type inequalities for regular Coulomb wave functions}
\setcounter{equation}{0}

The regular Coulomb wave function $F_L(\eta,\rho)$ is the standard solution of the Coulomb wave equation \cite[eq. 33.2.1]{nist}
\begin{equation}\label{eq:coulomb}
w''(\rho)+\left(1-\frac{2\eta}{\rho}-\frac{L(L+1)}{\rho^2}\right)w(\rho)=0,\qquad \rho>0,
\end{equation}
with regular behavior at the origin. We use the usual normalization for which
\[F_L(\eta,\rho)\sim C_L(\eta)\rho^{L+1}\]
as $\rho\searrow0,$ where $C_L(\eta)>0$. Tur\'an-type inequalities for regular Coulomb wave functions were developed by \'A. Baricz \cite{Baricz2015}, who also established a Mittag-Leffler expansion and several zero-interlacing properties. More recently, S.-Y. Chung \cite{Chung2026} obtained uniform bounds and further zero-separation and parameter-monotonicity results. In this section we are going to investigate a different object: not the standard order-shifted Coulomb Tur\'anian itself, but a Riccati functional naturally associated with the logarithmic derivative of $F_L$. The motivation comes naturally from the Bessel case. If $\eta=0$, then \cite[eq. 33.5.4]{nist}
\[F_L(0,\rho)=\sqrt{\frac{\pi\rho}{2}}\,J_{L+\frac{1}{2}}(\rho),\]
up to the standard normalization convention. Hence, after dividing by $\sqrt\rho$, the Coulomb equation becomes the Bessel-type differential equation of order $\nu=L+\frac12.$ The functional introduced below then reduces exactly to the normalized Tur\'an expression for Bessel functions of the first kind, and thus the theorem proved here is a direct deformation of the successive-minima result for the normalized Bessel Tur\'anian. To the best of our knowledge, the monotonicity of the successive interval minima considered below has not previously been recorded for regular Coulomb wave functions.

Throughout this section we assume that $L\ge0,$ $\eta>0,$ and $\nu=L+\frac12.$ We define $u(\rho)=\rho^{-1/2}F_L(\eta,\rho).$ The Coulomb differential equation \eqref{eq:coulomb} becomes
$$u''(\rho)+\frac1\rho u'(\rho)+\left(1-\frac{2\eta}{\rho}-\frac{\nu^2}{\rho^2}\right)u(\rho)=0.$$
Now, we define the logarithmic derivative
$$y_{L,\eta}(\rho)=\frac{\rho u'(\rho)}{u(\rho)}=\frac{\rho F_L'(\eta,\rho)}{F_L(\eta,\rho)}-\frac12$$
and a direct calculation yields the Riccati differential equation
$$\rho y_{L,\eta}'(\rho)=\nu^2-\rho^2+2\eta\rho-y_{L,\eta}^2(\rho).$$
We now introduce the expression
$$\Psi_{L,\eta}(\rho)=1-\frac{2\eta}{\rho}+\frac{y_{L,\eta}^2(\rho)-\nu^2}{\rho^2},$$
which clearly satisfies the basic identity $y_{L,\eta}'(\rho)=-\rho\Psi_{L,\eta}(\rho).$

\begin{theorem}\label{thm:main3}
Let $L\ge0,$ $\eta>0$ and let $0<x_{L,\eta,1}<x_{L,\eta,2}<\ldots$ denote the positive zeros of $F_L(\eta,\cdot)$. For every $n\ge1$, let $a_{L,\eta,n}$ be the unique critical point of $\Psi_{L,\eta}$ in $(x_{L,\eta,n},x_{L,\eta,n+1})$ and define
$b_{L,\eta,n}=\Psi_{L,\eta}(a_{L,\eta,n}).$ Then the sequence $\{b_{L,\eta,n}\}_{n\geq1}$ satisfies $$0<b_{L,\eta,1}<b_{L,\eta,2}<\ldots<b_{L,\eta,n}<b_{L,\eta,n+1}<\ldots< 1$$ and $\lim_{n\to\infty}b_{L,\eta,n}=1.$ Consequently, by using the notation $F_L=F_L(\eta,\rho)$ and  $$\Delta_{L,\eta}^C(\rho)=\left(L(L+1)+\eta^2\right)F_L^2-\sqrt{(L^2+\eta^2)((L+1)^2+\eta^2)}F_{L-1}F_{L+1}+\eta\left(\frac{F_L^2}{\rho}-F_LF_L'\right),$$
we arrive at the sharp Tur\'an type inequalities
$$\Delta_{L,\eta}^C(\rho)=L(L+1)\Psi_{L,\eta}(\rho)\left(F_L(\eta,\rho)\right)^2\ge L(L+1)b_{L,\eta,n}\cdot\left(F_L(\eta,\rho)\right)^2,$$
where $x_{L,\eta,n}<\rho<x_{L,\eta,n+1}$ and equality holds exactly at the points $\rho=a_{L,\eta,n}$. Moreover, the sharp constants $L(L+1)b_{L,\eta,n}$ increase to $L(L+1)$ as $n\to\infty.$
\end{theorem}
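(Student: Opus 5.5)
The plan is to follow the Riccati strategy of Theorems \ref{thm:main1} and \ref{thm:main2}, with the extra Coulomb term $2\eta/\rho$ carried along. Write $y=y_{L,\eta}$ and $\Psi=\Psi_{L,\eta}$.

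\textbf{Step 1: differential identities.} Differentiating the definition of $\Psi$ and substituting $y'=-\rho\Psi$ gives
\[\tfrac12\rho\Psi'(\rho)=1-\frac{\eta}{\rho}-(1+y(\rho))\Psi(\rho).\]
Differentiating once more and evaluating at a critical point $a$ gives $\tfrac12 a\Psi''(a)=\eta/a^2+a\Psi^2(a)>0$. As in the proof of Theorem \ref{thm:main2}, every critical point is therefore a strict local minimum, so each nodal interval contains at most one critical point.

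\textbf{Step 2: existence of the minimum and the bounds $0<b<1$.} Near a zero of $F_L$ we have $|y|\to\infty$, so $\Psi\to+\infty$ at both endpoints of $(x_{L,\eta,n},x_{L,\eta,n+1})$. Hence the unique critical point $a_{L,\eta,n}$ exists and is the global minimum on the interval. For $\Psi>0$ I will use Baricz's Mittag-Leffler expansion of $F_L'/F_L$ over the zeros. Differentiating it should give $y'(\rho)=-\rho\cdot(\text{a sum of positive terms})$, so that $y$ is strictly decreasing and $\Psi>0$; in particular $b_{L,\eta,n}>0$.

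For the upper bound, $y$ decreases from $+\infty$ to $-\infty$ on the interval, so there is a point $\xi$ in it with $y(\xi)=\nu$. Then $\Psi(\xi)=1-2\eta/\xi<1$ because $\eta>0$, and hence $b_{L,\eta,n}\le\Psi(\xi)<1$.

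\textbf{Step 3: the stationary relations and monotonicity in $n$.} At $a=a_{L,\eta,n}$ put $\tau=y(a)$. The critical-point condition reads $b(1+\tau)=1-\eta/a$. Equating this with the definition of $\Psi(a)$ and multiplying by $a^2$ gives the constraint
\[G(a,\tau):=\tau a^2-\eta a(1+2\tau)+(1+\tau)(\tau^2-\nu^2)=0.\]
The points $a_{L,\eta,n}$ increase with $n$. I then want to show that $\tau_n$ strictly decreases, which is the analogue of the decrease of $q_\mu$. Given that, $b_n=(1-\eta/a_n)/(1+\tau_n)$ strictly increases, because its numerator increases and its denominator decreases; here $1+\tau>0$ follows from $b>0$ once we know $a>\eta$.

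This is the main obstacle, since the relation between $a$ and $\tau$ is no longer an explicit function $a^2=q(\tau)$. My first attempt will be to solve $G=0$ as a quadratic in $a$,
\[a=\frac{\eta(1+2\tau)+\sqrt{\eta^2(1+2\tau)^2-4\tau(1+\tau)(\tau^2-\nu^2)}}{2\tau}.\]
I will choose the root by continuity from the case $\eta=0$, and check that this root is strictly decreasing in $\tau$ on the admissible range $0<\tau<\nu$. If that is too messy, the fallback is implicit differentiation: show $\partial_\tau G$ and $\partial_a G$ have the same sign on the branch $G=0$ with $a>\eta$.

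For the limit, $\tau_n$ stays bounded, so dividing $G=0$ by $a_n^2\to\infty$ forces $\tau_n\to0$, and then $b_n\to1$. The lower bound $b>0$ in this argument uses the same positivity of $\Psi$ from Step 2.

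\textbf{Step 4: the Tur\'an identity.} Finally I will express $F_{L\pm1}$ through $F_L$ and $F_L'$ using the Coulomb recurrences \cite[\S33.4]{nist}:
\[L F_L'=\sqrt{L^2+\eta^2}\,F_{L-1}-\Bigl(\frac{L^2}{\rho}+\eta\Bigr)F_L,\qquad (L+1)F_L'=\Bigl(\frac{(L+1)^2}{\rho}+\eta\Bigr)F_L-\sqrt{(L+1)^2+\eta^2}\,F_{L+1}.\]
Then I will multiply out $\sqrt{(L^2+\eta^2)((L+1)^2+\eta^2)}\,F_{L-1}F_{L+1}$ and substitute $F_L'/F_L=(y+\tfrac12)/\rho$. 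A routine expansion should give $\Delta^C_{L,\eta}=L(L+1)\Psi F_L^2$. The inequality, its sharpness exactly at $\rho=a_{L,\eta,n}$, and the convergence of the constants $L(L+1)b_{L,\eta,n}$ up to $L(L+1)$ then follow from Steps 2 and 3.
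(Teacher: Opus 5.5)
Your Steps 1 and 4 match the paper. Your bound $b_{L,\eta,n}<1$ via a point $\xi$ with $y_{L,\eta}(\xi)=\nu$ is a valid alternative: the intermediate value theorem alone gives $\xi$, and then $\Psi_{L,\eta}(\xi)=1-2\eta/\xi<1$.

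\textbf{The gap is the sign information that everything else rests on.} Your route to $\Psi_{L,\eta}>0$ does not work. Write $\rho_k$ for the real zeros of $\rho\mapsto\rho^{-L-1}F_L(\eta,\rho)$. The expansion $F_L'/F_L=(L+1)/\rho+\eta/(L+1)+\sum_k\rho/(\rho_k(\rho-\rho_k))$ differentiates to
\[\Psi_{L,\eta}(\rho)=-\frac{\eta}{(L+1)\rho}+\sum_k\frac{2\rho_k-\rho}{\rho_k(\rho-\rho_k)^2}.\]
Here the first term and every positive zero with $\rho_k<\rho/2$ contribute negatively. Only for $\eta=0$ does pairing $\pm j_{\nu,k}$ turn this into Skovgaard's positive sum.

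You also use $a_{L,\eta,n}>\eta$ without proof, and you never show $\tau_n=y_{L,\eta}(a_{L,\eta,n})>0$ (or even $1+\tau_n>0$). Yet your ``numerator increases, denominator decreases'' step needs both factors positive. The paper supplies these facts with two arguments absent from your plan:
\begin{itemize}
\item A turning-point argument. On $(0,\rho_t)$, with $\rho_t=\eta+\sqrt{\eta^2+L(L+1)}$, one has $F_L''=-c(\rho)F_L>0$ while $F_L>0$, so $F_L$ cannot vanish there. Hence $a_{L,\eta,n}>x_{L,\eta,1}>\rho_t\ge2\eta$.
\item A zero $\rho_0\in(x_{L,\eta,n},a_{L,\eta,n}]$ of $y_{L,\eta}$ would give $\tfrac12\rho_0\Psi_{L,\eta}'(\rho_0)=\eta/\rho_0+\nu^2/\rho_0^2>0$. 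This is impossible to the left of the minimum.
\end{itemize}
Then $b_{L,\eta,n}=(1-\eta/a_{L,\eta,n})/(1+\tau_n)\in(0,1)$ follows at once, without any expansion over zeros.

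\textbf{Step 3 also imports the Bessel range $0<\tau<\nu$, which is false here.} If $0<\tau\le\nu$, your constraint $G=0$ (the paper's $H=0$) forces $\tau(a-2\eta)\ge\eta$, i.e.\ $a\ge2\eta+\eta/\nu$. But for fixed $L$ and large $\eta$, the first zeros lie only $O(\eta^{1/3})$ beyond $\rho_t\approx2\eta$, by Airy scaling. So $\tau_1>\nu$; indeed $\tau_1$ is of order $\eta^{2/3}$. For $\tau>\nu$ both roots of your quadratic in $a$ are positive, so continuity from $\eta=0$ does not select the branch.

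The paper parametrizes the other way. For fixed $a>\eta$, $H(a,\cdot)$ has exactly one positive root $y(a)$, because $H(a,0)=-\eta a-\nu^2<0$ and $\partial_yH=(a(a-\eta)+2y(1+y)^2)/(1+y)>0$ at positive roots. Then $r(a)=(a-\eta)/(a(1+y(a)))$ satisfies $r'(a)>0$ using only $a>\eta$ and $y>0$.

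Your fallback of showing that $\tau_n$ decreases can be completed, since $\partial_aG=2\tau(a-\eta)-\eta>0$ on $G=0$ once $\tau>0$ and $a>2\eta$. But that again requires exactly the facts above.
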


We mention that in the above theorem the correction term proportional to $\eta$ is intrinsic to the Coulomb
recurrences.  When $\eta=0$ it disappears, and consequently
\[\Psi_{L,0}(\rho)=\Phi_{L+\frac{1}{2}}(\rho)=1-\frac{J_{L-1/2}(\rho)J_{L+3/2}(\rho)}{J_{L+1/2}(\rho)^2},\]
so the construction reduces exactly to the normalized Bessel Tur\'an
quotient.

\begin{proof}[\bf Proof of Theorem \ref{thm:main3}]
Observe that
$$\Psi_{L,\eta}'(\rho)=\frac{2\eta}{\rho^2}+\frac{2y_{L,\eta}(\rho)y_{L,\eta}'(\rho)}{\rho^2}-\frac{2(y_{L,\eta}^2(\rho)-\nu^2)}{\rho^3}$$
and thus in view of the relations $y_{L,\eta}'(\rho)=-\rho\Psi_{L,\eta}(\rho)$ and $y_{L,\eta}^2(\rho)-\nu^2=\rho^2(\Psi_{L,\eta}(\rho)-1)+2\eta\rho$ we arrive at
$$\Psi_{L,\eta}'(\rho)=\frac{2Q(\rho)}{\rho},\quad \mbox{where}\quad Q(\rho)=1-\frac{\eta}{\rho}-(1+y_{L,\eta}(\rho))\Psi_{L,\eta}(\rho).$$
At a critical point $a>0$ we have $Q(a)=0$ and consequently
\[(1+y_{L,\eta}(a))\Psi_{L,\eta}(a)=1-\frac{\eta}{a}.\]
Moreover, differentiating both sides of $\Psi_{L,\eta}'(\rho)={2Q(\rho)}/{\rho}$ with respect to $\rho,$ we obtain that at a critical point $a>0$ we have
\[\Psi_{L,\eta}''(a)=\frac{2}{a}Q'(a)=\frac{2}{a}\left(\frac{\eta}{a^2}-y_{L,\eta}'(a)\Psi_{L,\eta}(a)\right)=2\Psi_{L,\eta}^2(a)+\frac{2\eta}{a^3}>0.\]
Hence every critical point is a strict local minimum, and there can be
only one on each interval.

It is well-known that the positive zeros of the regular Coulomb wave function are simple and form an unbounded sequence. Moreover, near a simple zero $x_{L,\eta,n}$ we clearly have that
\[\frac{F_L'(\eta,\rho)}{F_L(\eta,\rho)}\sim \frac{1}{\rho-x_{L,\eta,n}},\]
and consequently $y_{L,\eta}^2(\rho)/\rho^2\to+\infty$ and $\Psi_{L,\eta}(\rho)\to+\infty$ at both endpoints of each nodal interval. Thus, $\Psi_{L,\eta}$ attains at least one interior minimum on $(x_{L,\eta,n},x_{L,\eta,n+1}).$ Every critical point is a strict local minimum by the above strict convexity property. There cannot be two distinct critical points: two strict local minima would force an intervening critical point that is not a local minimum. Hence the critical point is unique and, because the function tends to $+\infty$ at both endpoints, it is the global minimum on $(x_{L,\eta,n},x_{L,\eta,n+1}).$

Let
\[\rho_t=\eta+\sqrt{\eta^2+L(L+1)}.\]
For $0<\rho<\rho_t$ the coefficient
\[c(\rho)=1-\frac{2\eta}{\rho}-\frac{L(L+1)}{\rho^2}\]
is negative. Since the regular solution is positive and increasing near
the origin, the differential equation shows that it cannot vanish before
$\rho_t$. If the first positive zero occurred before $\rho_t$, then $F_L(\eta,\rho)>0$ before that zero and the differential equation would imply
$F_L''(\eta,\rho)=-c(\rho)\cdot F_L(\eta,\rho)>0.$ Thus $F_L'(\eta,\rho)$ would remain positive and $F_L(\eta,\rho)$ could not reach zero, a contradiction. Hence $x_{L,\eta,1}>\rho_t$. Since $L\ge0$, we clearly have $\sqrt{\eta^2+L(L+1)}\ge\eta,$ which gives $\rho_t\ge2\eta$ and consequently $a_{L,\eta,n}>\eta$ for every $n$.

Moreover, $y(\rho)\to+\infty$ as $\rho\searrow x_{L,\eta,n}$.  If
$y_{L,\eta}(a_{L,\eta,n})\le0$, then at some point $\rho_0\le a_{L,\eta,n}$ one would have $y_{L,\eta}(\rho_0)=0$.
At such a point we would obtain
\[\Psi_{L,\eta}'(\rho_0)=\frac2\rho_0\left(\frac{\eta}{\rho_0}+\frac{\nu^2}{\rho_0^2}\right)>0,\]
whereas $\Psi_{L,\eta}'(\rho)<0$ to the left of its unique minimum, that is for every $\rho\in(x_{L,\eta,n},a_{L,\eta,n}).$ Hence we arrive at $y_{L,\eta,n}:=y_{L,\eta}(a_{L,\eta,n})>0.$ On the other hand, it follows immediately from the critical-point identity that
\[b_{L,\eta,n}=\frac{1-\eta/a_{L,\eta,n}}{1+y_{L,\eta,n}},\]
and therefore we have shown that $0<b_{L,\eta,n}<1$ for all $L,\eta>0$ and $n\in\mathbb{N}.$

Combining this formula with the definition of $\Psi_{L,\eta}$ yields $H(a_{L,\eta,n},y_{L,\eta,n})=0,$ where
\[H(a,y)=ya^2-\eta(2y+1)a+(1+y)(y^2-\nu^2).\]
For fixed $a>\eta$, this equation has exactly one positive root, denoted by $y(a)$. Indeed $H(a,0)<0$, while $H(a,y)\to+\infty$ as $y\to+\infty$,
and at every positive root
\[\frac{\partial H(a,y)}{\partial y}=\frac{a(a-\eta)+2y(1+y)^2}{1+y}>0.\]
Thus every positive zero is crossed with positive derivative and since $H(a,0)<0$, there can be only one positive crossing. Smoothness follows from the implicit-function theorem and the latter equation. Moreover, if we define
\[r(a)=\frac{a-\eta}{a(1+y(a))},\]
then implicit differentiation of $H(a,y(a))=0$ gives
\[y'(a)=-\frac{{\partial H(a,y(a))}/{\partial a}}{{\partial H(a,y(a))}/{\partial y}}=-\frac{2ay(a)-\eta(2y(a)+1)}{{\partial H(a,y(a))}/{\partial y}}\]
and
$$r'(a)=\frac{2y(a)\left(a(a-\eta)^2+\eta(1+y(a))^2\right)}{a^2(1+y(a))^2\cdot {\partial H(a,y(a))}/{\partial y}}>0.$$
Since the points $a_{L,\eta,n}$ lie in consecutive disjoint zero intervals, we have $a_{L,\eta,n+1}>a_{L,\eta,n}$ and because $b_{L,\eta,n}=r(a_{L,\eta,n})$, we obtain
$b_{L,\eta,n+1}>b_{L,\eta,n}$. Moreover, observe that $a_{L,\eta,n}\to\infty$ as $n\to\infty$ and the equation $H(a,y(a))=0$ forces
$y(a)\to0$ as $a\to\infty.$ Indeed, the equation $H(a,y(a))=0$ may be rewritten as $y(a)a(a-2\eta)=\eta a+(1+y(a))(\nu^2-y^2(a))$ and therefore
$y(a)a(a-2\eta)\leq \eta a+(1+y(a))\nu^2.$ Thus, for all sufficiently large $a$ we have
$$0<y(a)\leq\frac{\eta a+\nu^2}{a(a-2\eta)-\nu^2}$$
and since the right-hand side tends to zero, we clearly obtain $y(a_{L,\eta,n})\to0$ as $n\to\infty,$ which implies that
\[b_{L,\eta,n}=r(a_{L,\eta,n})=\frac{1-\eta/a_{L,\eta,n}}{1+y(a_{L,\eta,n})}\to1\]
as $n\to\infty.$

Finally, we mention that from the Coulomb differential equation we obtain
\[\Psi_{L,\eta}(\rho)=\frac{\left(F_L'(\eta,\rho)\right)^2-F_L(\eta,\rho)F_L''(\eta,\rho)-\rho^{-1}F_L(\eta,\rho)F_L'(\eta,\rho)}{F_L^2(\eta,\rho)},\]
which implies the validity of the corresponding expression in the theorem for $L(L+1)\Psi_{L,\eta}F_L^2(\eta,\rho).$ Here we used that in view of the notations  $$R_L=\sqrt{1+\frac{\eta^2}{L^2}}\quad \mbox{and}\quad S_L=\frac{L}{\rho}+\frac{\eta}{L}$$
the standard recurrence relations for the regular Coulomb wave functions are (see \cite[eq. 33.4.3]{nist} and \cite[eq. 33.4.4]{nist})
\[F_L'(\eta,\rho)=R_LF_{L-1}(\eta,\rho)-S_LF_L(\eta,\rho)\quad \mbox{and}\quad
F_L'(\eta,\rho)=S_{L+1}F_L(\eta,\rho)-R_{L+1}F_{L+1}(\eta,\rho).\]
\end{proof}

\section{\bf Tur\'an type inequalities for modified Bessel functions of purely imaginary order}
\setcounter{equation}{0}

For real order $\mu$, the classical Tur\'an type inequality for modified Bessel functions of the second kind is usually written in the reversed form
\[K_{\mu-1}(x)K_{\mu+1}(x)-K_\mu^2(x)>0,\quad x>0,\]
and the constant zero is best possible globally, see for example, \cite{Baricz2010}, \cite{Baricz2015}, \cite{BariczPonnusamy2013} or \cite{Segura2011}. The situation changes substantially for purely imaginary order. For fixed $\nu>0$, the function $K_{\mathrm{i}\nu}(x)$ is real for $x>0$, oscillates logarithmically near the origin, and possesses infinitely many positive zeros accumulating at zero, see \cite[Section 10.45]{nist}. The aim of this section is to show that the corresponding Tur\'an type inequality has a natural sharp formulation on each interval between consecutive zeros. In other words, the idea used above for Bessel functions, general Bessel functions and regular Coulomb wave functions can be also applied in the case of modified Bessel functions of the second kind of purely imaginary order.

Now, for $\nu>0$ let $f_{\nu}(x)=K_{\mathrm{i}\nu}(x).$ The function $f_{\nu}$ satisfies \cite[eq. 10.45.1]{nist}
\[x^2f_{\nu}''(x)+xf_{\nu}'(x)+(\nu^2-x^2)f_{\nu}(x)=0\]
and as $x\to0^+$ for $\gamma_{\nu}$ real and continuous with $\gamma_0=0,$ defined by \cite[eq. 10.24.3]{nist}, we have \cite[eq. 10.46.7]{nist}
\[
K_{\mathrm{i}\nu}(x)
=-\left(\frac{\pi}{\nu\sinh(\pi\nu)}\right)^{1/2}
\sin\!\left(\nu\ln\frac{x}{2}-\gamma_\nu\right)+\mathcal{O}(x^2),
\]
which implies that $f_{\nu}$ has infinitely many positive zeros tending to zero, see for example \cite{Laforgia1986}. Let us denote these zeros by $\kappa_{\nu,1}>\kappa_{\nu,2}>\ldots>\kappa_{\nu,n}>\ldots>0,$ and recall that all of these zeros are simple, by uniqueness for the differential equation. Recall also that for the modified Bessel function of the second kind we have the following well-known recurrence relations \cite[eq. 10.29.2]{nist}
\[K_{\mu-1}(x)=-K_\mu'(x)-\frac{\mu}{x}K_\mu(x),\quad K_{\mu+1}(x)=-K_\mu'(x)+\frac{\mu}{x}K_\mu(x)\]
and with $\mu=\mathrm{i}\nu$ we arrive at
\[K_{\mathrm{i}\nu-1}(x)K_{\mathrm{i}\nu+1}(x)=\left(f_{\nu}'(x)\right)^2+\frac{\nu^2}{x^2}f_{\nu}^2(x).\]
Since $K_{\mathrm{i}\nu-1}=K_{1-\mathrm{i}\nu}=\overline{K_{1+\mathrm{i}\nu}}$ for $x>0$, the product on the left-hand side is clearly $|K_{1+\mathrm{i}\nu}(x)|^2$. Moreover, if we define
\[T_\nu(x)=\frac{K_{\mathrm{i}\nu-1}(x)K_{\mathrm{i}\nu+1}(x)}{K_{\mathrm{i}\nu}^2(x)}-1=\left(\frac{f_{\nu}'(x)}{f_{\nu}(x)}\right)^2+\frac{\nu^2}{x^2}-1,\]
then with the notation $z_\nu(x)=x{f_{\nu}'(x)}/{f_{\nu}(x)},$ and in view of the modified Bessel differential equation, we arrive at $z_\nu'(x)=-xT_\nu(x)$ and
\begin{equation}\label{eqKim}-\frac{1}{2}xT_\nu'(x)=1+(1+z_\nu(x))T_\nu(x).\end{equation}

The next theorem complements the known results in the literature.

\begin{theorem}\label{thm:main4}
For every $\nu>0$ and $n\ge1$, the function $T_\nu$ has exactly one critical point $\alpha_{\nu,n}\in (\kappa_{\nu,n+1},\kappa_{\nu,n}),$ which is the unique minimum of $T_\nu$ on $(\kappa_{\nu,n+1},\kappa_{\nu,n}).$ If $\tau_{\nu,n}=T_\nu(\alpha_{\nu,n}),$ then $0<\tau_{\nu,1}<\tau_{\nu,2}<\ldots$ and $\tau_{\nu,n}\to\infty$ as $n\to\infty.$ Consequently, for all $\nu>0$ and $x\in(\kappa_{\nu,n+1},\kappa_{\nu,n})$ the following Tur\'an type inequality is valid
$$K_{\mathrm{i}\nu-1}(x)K_{\mathrm{i}\nu+1}(x)-K_{\mathrm{i}\nu}^2(x)\geq \tau_{\nu,n}\cdot K_{\mathrm{i}\nu}^2(x),$$
and the constant $\tau_{\nu,n}$ is best possible.
\end{theorem}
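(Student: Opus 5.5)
The plan is to follow the scheme of the proofs of Theorems \ref{thm:main1} and \ref{thm:main2}, using the Riccati structure \eqref{eqKim}. The key steps are uniqueness and nondegeneracy of the critical point, positivity of the minimum value, and a parametrization of the minimizer by $z_\nu$ that turns the ordering of the minimizers into monotonicity of $\tau_{\nu,n}$.

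\textbf{Unique minimum on each nodal interval.} Since the zeros $\kappa_{\nu,n}$ are simple, $z_\nu(x)^2/x^2=(f_\nu'/f_\nu)^2\to+\infty$ at both endpoints of $(\kappa_{\nu,n+1},\kappa_{\nu,n})$. Hence $T_\nu\to+\infty$ there, and $T_\nu$ attains an interior minimum. Let $a$ be any critical point. Then \eqref{eqKim} gives $(1+z_\nu(a))T_\nu(a)=-1$. Differentiating \eqref{eqKim} once more and using $z_\nu'=-xT_\nu$, the terms in $T_\nu'(a)$ vanish and we get $T_\nu''(a)=2T_\nu^2(a)>0$. So every critical point is a strict local minimum. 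As in the proof of Theorem \ref{thm:main2}, two such minima would force a local maximum between them, which is a critical point with $T_\nu''\le 0$; this is impossible. Hence the critical point $\alpha_{\nu,n}$ is unique and is the global minimum on the interval. Sharpness of the Tur\'an inequality is then immediate, since equality holds at $x=\alpha_{\nu,n}$.

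\textbf{Positivity, which I expect to be the main obstacle.} I want $\tau_{\nu,n}>0$. The cleanest route is to show that all zeros lie in $(0,\nu)$. There $\nu^2/x^2-1>0$, so $T_\nu>0$ on every nodal interval. To see that $\kappa_{\nu,1}<\nu$, write the equation as $(xf_\nu')'=\frac{x^2-\nu^2}{x}f_\nu$. Beyond its largest zero $f_\nu=K_{\mathrm{i}\nu}$ is positive, because $K_{\mathrm{i}\nu}(x)>0$ for large $x$ by the exponential asymptotics. Suppose the largest zero satisfied $\kappa_{\nu,1}\ge\nu$. Then $xf_\nu'$ would be increasing on $(\kappa_{\nu,1},\infty)$ and tend to $0$ at infinity, so $f_\nu'<0$ there. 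But $f_\nu$ starts at $0$ and then becomes positive, so $f_\nu'>0$ somewhere just to the right of $\kappa_{\nu,1}$, a contradiction. (Alternatively, one can cite the known fact that the zeros of $K_{\mathrm{i}\nu}$ lie below $\nu$.) Consequently $\tau_{\nu,n}>0$. The critical-point identity then gives $1+z_\nu(\alpha_{\nu,n})=-1/\tau_{\nu,n}<0$. Writing $s_{\nu,n}=-z_\nu(\alpha_{\nu,n})$, we obtain $s_{\nu,n}>1$ and $\tau_{\nu,n}=1/(s_{\nu,n}-1)$.

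\textbf{Monotonicity and the limit.} Evaluate the definition $T_\nu=(z_\nu^2+\nu^2)/x^2-1$ at $\alpha_{\nu,n}$ and use $1+\tau_{\nu,n}=s_{\nu,n}/(s_{\nu,n}-1)$. This gives
\[\alpha_{\nu,n}^2=p_\nu(s_{\nu,n}),\qquad p_\nu(s)=\frac{(s^2+\nu^2)(s-1)}{s},\]
where $p_\nu$ is strictly increasing on $(1,\infty)$. Because the nodal intervals move toward the origin as $n$ grows, $\alpha_{\nu,n+1}<\alpha_{\nu,n}$. Hence $s_{\nu,n+1}<s_{\nu,n}$, and so $\tau_{\nu,n+1}>\tau_{\nu,n}$. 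Finally, $\alpha_{\nu,n}<\kappa_{\nu,n}\to0$, so $p_\nu(s_{\nu,n})\to0$. Since $p_\nu$ is continuous and increasing on $[1,\infty)$ with $p_\nu(s)=0$ only at $s=1$, this forces $s_{\nu,n}\to1^+$, and therefore $\tau_{\nu,n}\to\infty$.
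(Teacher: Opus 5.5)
Your proposal is correct and follows essentially the paper's route: the Riccati identity \eqref{eqKim}, $T_\nu''=2T_\nu^2$ at critical points, and inverting the critical-point relation through a monotone function; your $p_\nu(s)$ with $s=1+1/\tau$ is exactly the paper's $h_\nu(\tau)$. Your Sturm-type argument that all zeros of $K_{\mathrm{i}\nu}$ lie in $(0,\nu)$, which gives $\tau_{\nu,n}>0$, supplies a step the paper's proof uses without justification when it works with $h_\nu$ only on $(0,\infty)$.
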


\begin{remark}
{\em We would like to mention here that since $K_{\mathrm{i}\nu-1}(x)K_{\mathrm{i}\nu+1}(x)=|K_{1+\mathrm{i}\nu}(x)|^2,$ the above Tur\'an type inequality may also be written as
\[|K_{1+\mathrm{i}\nu}(x)|^2-K_{\mathrm{i}\nu}^2(x)\ge \tau_{\nu,n}\cdot K_{\mathrm{i}\nu}^2(x).\]
Thus, if we use the conventional determinant notation
\[\Delta_\nu(x)=\left(K_{\mathrm{i}\nu}(x)\right)^2-K_{\mathrm{i}\nu-1}(x)K_{\mathrm{i}\nu+1}(x),\]
then the sharp statement becomes
\[\Delta_\nu(x)\le -\tau_{\nu,n}K_{\mathrm{i}\nu}^2(x),\]
and now the best constants in the conventional normalization are negative and strictly decreasing to $-\infty$ as the intervals approach the origin.

Moreover, it is worth to mention that the asymptotic formula for $K_{\mathrm{i}\nu}(x)$ for small argument, mentioned above, shows that the zeros are asymptotically geometric, that is
${\kappa_{\nu,n+1}}/{\kappa_{\nu,n}}\to e^{-\pi/\nu}$ as $n\to \infty$ and since the critical-point condition gives $z_{\nu}(\alpha_{\nu,n})\to-1$ we further obtain
${\alpha_{\nu,n+1}}/{\alpha_{\nu,n}}\to e^{-\pi/\nu}$ as $n\to\infty$ and therefore ${\tau_{\nu,n+1}}/{\tau_{\nu,n}}\to e^{2\pi/\nu},$ as $n\to \infty.$ Hence the imaginary-order Macdonald case is qualitatively different from the Bessel, general Bessel or regular Coulomb cases, that is $\tau_{\nu,n}\to\infty$ geometrically, while $\alpha_{\nu,n}^2\tau_{\nu,n}\to1+\nu^2$ as $n\to\infty.$ The difference is explained by the zero geometry: the relevant zeros of $J_\mu$ and of the regular Coulomb wave function escape to $+\infty$, whereas those of $K_{\mathrm{i}\nu}$ accumulate at the singular endpoint $0$.}
\end{remark}

\begin{proof}[\bf Proof of Theorem \ref{thm:main4}]
Since the zeros of $f_{\nu}$ are simple, we have $T_\nu(x)\to+\infty$ as $x$ approaches either endpoint of the interval $(\kappa_{\nu,n+1},\kappa_{\nu,n}).$ Hence $T_\nu$ has at least one interior minimum. Let $\alpha$ be any critical point and write $\tau=T_\nu(\alpha)$. Thus if $T_\nu'(\alpha)=0,$ the relation \eqref{eqKim} clearly implies $1+(1+z_\nu(\alpha))\tau=0$ and differentiating the formula for $T_\nu'$ and using $z_\nu'(x)=-xT_\nu(x)$ gives, at a critical point, the following $T_\nu''(\alpha)=2\tau^2>0.$
Thus every critical point is a strict local minimum. Two such critical points in the same interval would force an intervening critical point, which is not a local minimum. Hence the critical point is unique. Now, at the critical point we have $z_\nu(\alpha)=-1-1/\tau$ and substitution into
\[T_\nu(x)+1=\frac{z_\nu^2(x)+\nu^2}{x^2}\]
implies
$$\alpha_{\nu,n}^2=\frac{\tau_{\nu,n}+1}{\tau_{\nu,n}^2}+\frac{\nu^2}{\tau_{\nu,n}+1}.$$
On the other hand, the function
\[t\mapsto h_\nu(t)=\frac{t+1}{t^2}+\frac{\nu^2}{t+1}\]
is strictly decreasing on $(0,\infty)$ and this in turn implies that clearly $\tau_{\nu,1}<\tau_{\nu,2}<\ldots,$ since $\alpha_{\nu,1}>\alpha_{\nu,2}>\ldots>0$ and $\alpha_{\nu,n}^2=h_{\nu}(\tau_{\nu,n}).$ Finally $\alpha_{\nu,n}\to0$, while $h_\nu$ decreases from $+\infty$ to $0$ on $(0,\infty)$. Therefore $\tau_{\nu,n}\to\infty$.

Now, we fix the nodal index $n$ and regard $\tau_{\nu,n}$ as a function of $\nu.$ Since the critical point is non-degenerate, the implicit-function theorem
gives local smooth dependence of $\alpha_{\nu,n}$ on $\nu,$ and we obtain ${{\rm d}\tau_{\nu,n}}/{{\rm d}\nu}={\partial T_\nu(\alpha_{\nu,n})}/{\partial\nu}.$ On the other hand we have
at fixed $x$ the next formula
\[\frac{\partial T_\nu(x)}{\partial\nu}=\frac{2}{x^2}\left(z_\nu(x)\frac{\partial z_\nu(x)}{\partial\nu}+\nu\right).\]
In view of the above formula we are going to derive an integral representation for the order derivative of $z_\nu.$ For this let $v(x)={\partial f_\nu(x)}/{\partial\nu}$ and observe that the self-adjoint modified Bessel differential equation
\[(xf_\nu'(x))'+\left(\frac{\nu^2}{x}-x\right)f_\nu(x)=0\]
implies after differentiation with respect to $\nu$ the following
\[(xv'(x))'+\left(\frac{\nu^2}{x}-x\right)v(x)=-\frac{2\nu}{x}f_\nu(x).\]
Multiplying both sides of the latter equation by $f_{\nu}(x)$ and subtracting the corresponding parts of the original equation multiplied by $v(x)$ yields
\begin{equation}\label{eqmodKLag}\frac{{\rm d}}{{\rm d}x}\left(x(f_\nu(x)v'(x)-f_\nu'(x)v(x))\right)=-\frac{2\nu}{x}f_\nu^2(x),\end{equation}
very similarly as in the case of Bessel functions of the first kind and general Bessel functions. Now, recall that for fixed order $\nu$ the next asymptotics is valid
for large $x$ (see \cite[eq. 10.40.2]{nist})
$$K_{\nu}(x)\sim\sqrt{\frac{\pi}{2x}}e^{-x}\left(1+\frac{4\nu^2-1}{8x}+\mathcal{O}\left(x^{-2}\right)\right)$$
and thus, we clearly have that
$$K_{\mathrm{i}\nu}(x)\sim\sqrt{\frac{\pi}{2x}}e^{-x}\left(1-\frac{4\nu^2+1}{8x}+\mathcal{O}\left(x^{-2}\right)\right),$$
as $x\to\infty.$ Thus, as $x\to\infty$ we obtain that $f_{\nu}(x)=\mathcal{O}\left(e^{-x}x^{-1/2}\right),$ $f_{\nu}'(x)=\mathcal{O}\left(e^{-x}x^{-1/2}\right),$ $v(x)=\mathcal{O}\left(e^{-x}x^{-3/2}\right)$ and $v(x)=\mathcal{O}\left(e^{-x}x^{-3/2}\right),$ which in turn implies that
$$x\left(f_\nu(x)v'(x)-f_\nu'(x)v(x)\right)=\mathcal{O}\left(e^{-2x}x^{-1}\right)$$
as $x\to \infty.$ Consequently, because both $K_{\mathrm{i\nu}}(x)$ and its order derivative decay exponentially as $x\to\infty,$ integration from $x$ to $\infty$ of both sides of the equation \eqref{eqmodKLag} (where we tacitly changed $x$ to $t$) implies that
\[\frac{\partial z_\nu}{\partial\nu}(x)=\frac{2\nu}{f_\nu^2(x)}\int_x^\infty\frac{f_\nu^2(t)}{t}{\rm d}t\]
and thus it remains to prove that
$$\varpi_1=\int_{\alpha_{\nu,n}}^\infty\frac{f_\nu^2(t)}{t}{\rm d}t>\frac{f_{\nu}^2(\alpha_{\nu,n})}{2q_{\nu,n}},$$
where $q_{\nu,n}=-z_{\nu}(\alpha_{\nu,n})=1+1/\tau_{\nu,n}.$ Thus, we consider the positive quadratic form
\[\varpi=\int_{\alpha_{\nu,n}}^\infty t\left(f_\nu'(t)+\frac{q_{\nu,n}}{t}f_\nu(t)\right)^2{\rm d}t=(\nu^2+q_{\nu,n}^2)\varpi_1-\varpi_2,\quad \mbox{where}\quad \varpi_2=\int_{\alpha_{\nu,n}}^\infty t f_\nu^2(t){\rm d}t\]
and in the evaluation of this quadratic form we used the self-adjoint form of the modified Bessel differential equation together with integration by parts. On the other hand, in view of the following identity
\[\frac{{\rm d}}{{\rm d}x}\left(x^2\left(f_\nu'(x)\right)^2+(\nu^2-x^2)f_\nu^2(x)\right)=-2xf_\nu^2(x),\]
which can be deduced with the help of the modified Bessel differential equation, we obtain that
$$2\varpi_2=\alpha_{\nu,n}^2\left(f_\nu'(\alpha_{\nu,n})\right)^2+(\nu^2-\alpha_{\nu,n}^2)f_{\nu}^2(\alpha_{\nu,n})=
f_{\nu}^2(\alpha_{\nu,n})\left(q_{\nu,n}^2+\nu^2-\alpha_{\nu,n}^2\right)=\alpha_{\nu,n}^2\tau_{\nu,n}f_{\nu}^2(\alpha_{\nu,n}).$$
Consequently, we have
\[\varpi=\alpha_{\nu,n}^2(1+\tau_{\nu,n})\varpi_1-\frac{\alpha_{\nu,n}^2\tau_{\nu,n}}{2}f_{\nu}^2(\alpha_{\nu,n})>0,\]
and with this the proof is complete.
\end{proof}

\section{\bf Tur\'an type inequalities for parabolic cylinder functions}
\setcounter{equation}{0}

Before we discuss the case of Tur\'an type inequalities for parabolic cylinder functions, first we recall some known results in the literature. Note that \'A. Baricz and M.E.H. Ismail \cite{BariczIsmail2013} obtained sharp two-sided Tur\'an type inequalities for parabolic cylinder functions in a non-oscillatory negative-order range, by using integral representations for quotients of parabolic cylinder functions, and J. Segura \cite{Segura2012} derived related bounds by a general method for first-order difference-differential systems. Moreover, it is worth to mention here that in a stochastic-control setting, D. Becherer, D. Bilarev and P. Frentrup \cite{BechererBilarevFrentrup2018} conjectured the strict monotonicity of a quotient of Hermite functions. T. Koch \cite{Koch2020} proved this conjecture, obtained sharp universal bounds, and translated the result to parabolic cylinder functions from which the Tur\'an type inequalities follow as a consequence. T. Koch's proof is probabilistic and uses the interpretation of Hermite functions as eigenfunctions associated with an Ornstein–Uhlenbeck process. The above mentioned results concern the global behavior in a non-oscillatory range. The problem considered in this section complements the above mentioned results since for positive order, real zeros produce poles of the normalized Tur\'an expression, and the natural objects are its minima on the intervals between consecutive positive zeros. The positivity of the corresponding Tur\'anian of parabolic cylinder functions for positive order has been claimed in \cite[Remark 1]{BariczIsmail2013}, and now we are going to prove this in the present section. However, it is worth to mention here that the minimum values does not have the increasing property as in the case of Bessel, general Bessel or regular Coulomb wave functions.

For parabolic cylinder functions we use the notation \cite[12.2.5]{nist}
\[ D_\nu(x)=U\left(-\nu-\frac12,x\right).\]
The real-zero classification for $U(a,x)$ in \cite[Section 12.11]{nist} states that if $-2n-\frac32<a<-2n+\frac12$ for $n\in\mathbb{N},$ then $U(a,x)$ has $n$ positive real zeros, or equivalently, if $2n-1<\nu<2n+1$ for $n\in\mathbb{N},$ then $D_{\nu}(x)$ has $n$ positive real zeros. Thus, if $N_+(\nu)$ counts the number of strictly positive zeros of $D_\nu,$ then for $0<\nu\leq 1$ we have $N_+(\nu)=0,$ and in the case when $2n-1<\nu<2n+1$ we have $N_+(\nu)=n.$ At the right endpoint $\nu=2n+1$, the connection between parabolic cylinder functions and Hermite polynomials shows that $x=0$ is a zero, in addition to the $n$ strictly positive zeros. Thus the origin is not included in $N_+(\nu)$. Consequently, intervals between consecutive strictly positive zeros exist exactly when $\nu>3$, and if $n=N_+(\nu)$, their number is $n-1$. Moreover, when $\nu$ is
an odd integer at least $3$, the interval from the zero at the origin to the first positive zero is an additional pole-to-pole interval, which will be discussed
separately in Remark \ref{rem:end-branches}.

Now, for $\nu>0$ we define
\[\Pi_\nu(x)=1-\frac{D_{\nu-1}(x)D_{\nu+1}(x)}{D_\nu^2(x)},\]
where $D_\nu$ is the parabolic cylinder function and when $\nu>1$, let $0<d_{\nu,1}<\ldots<d_{\nu,n}$ be its strictly positive zeros, where
$N=N_+(\nu).$

The next theorem is the corresponding result on the Tur\'anian of parabolic cylinder functions. Figure \ref{Fig2} illustrates the result of this theorem.

\begin{theorem}\label{thm:main5}
Let $\nu>3$. For every $n\in\{1,\ldots,N-1\}$, the function $\Pi_\nu$ has exactly one critical point $\gamma_{\nu,n}\in(d_{\nu,n},d_{\nu,n+1}),$ which is a strict minimum. If $\delta_{\nu,n}=\Pi_\nu(\gamma_{\nu,n}),$ then we have $1>\delta_{\nu,1}>\delta_{\nu,2}>\ldots>\delta_{\nu,N-1}>0.$ Consequently, for all $x\in(d_{\nu,n},d_{\nu,n+1})$ we have the next Tur\'an type inequality
\[ D_\nu^2(x)-D_{\nu-1}(x)D_{\nu+1}(x)\ge \delta_{\nu,n}D_\nu^2(x),\]
where $\delta_{\nu,n}$ is the best possible constant.
\end{theorem}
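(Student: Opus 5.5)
The plan is to repeat the Riccati scheme used for Theorems \ref{thm:main1}--\ref{thm:main4}, adapted to the differential equation $D_\nu''(x)+\left(\nu+\frac12-\frac{x^2}{4}\right)D_\nu(x)=0$. The recurrences \cite[Section 12.8]{nist} give $\nu D_{\nu-1}=D_\nu'+\frac x2D_\nu$ and $D_{\nu+1}=\frac x2D_\nu-D_\nu'$, so $\nu D_{\nu-1}D_{\nu+1}=\frac{x^2}{4}D_\nu^2-(D_\nu')^2$. With $w=D_\nu'/D_\nu$ this gives
\[\Pi_\nu(x)=1-\frac{x^2}{4\nu}+\frac{w^2(x)}{\nu},\qquad w'(x)=-\nu\Pi_\nu(x)-\tfrac12,\]
and hence
\[\Pi_\nu'(x)=-\frac{x}{2\nu}-\frac{w(x)}{\nu}\bigl(2\nu\Pi_\nu(x)+1\bigr).\]
At a critical point $\gamma$ this gives $w(\gamma)=-\gamma/(2s)$ with $s=2\nu\Pi_\nu(\gamma)+1$. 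Differentiating once more, using $w'=-\nu\Pi_\nu-\frac12$, gives $\Pi_\nu''(\gamma)=2\Pi_\nu(\gamma)\bigl(\nu\Pi_\nu(\gamma)+1\bigr)$. Since $D_\nu$ has simple zeros and the recurrences give $D_{\nu-1}D_{\nu+1}<0$ at them, $\Pi_\nu\to+\infty$ at both ends of $(d_{\nu,n},d_{\nu,n+1})$, so a minimum exists. Uniqueness of the critical point then follows exactly as before, provided $\Pi_\nu>0$ at every critical point.

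The key step, and the main obstacle, is therefore positivity. I would first prove the global bound $\nu\Pi_\nu+\frac12>0$ from an energy identity. Put $E(x)=(D_\nu'(x))^2+\left(\nu+\frac12-\frac{x^2}{4}\right)D_\nu^2(x)$. The differential equation gives $E'(x)=-\frac x2D_\nu^2(x)$. Since $D_\nu(x)\sim x^\nu e^{-x^2/4}$ as $x\to\infty$, we have $E(\infty)=0$, and therefore
\[E(x)=\int_x^\infty\frac t2D_\nu^2(t)\,{\rm d}t>0 .\]
On the other hand $E=\nu\Pi_\nu D_\nu^2+\frac12D_\nu^2$, so $s(x):=2\nu\Pi_\nu(x)+1>0$ wherever $D_\nu(x)\neq 0$.

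Substituting $w(\gamma)=-\gamma/(2s)$ into the formula for $\Pi_\nu$ at a critical point gives
\[\gamma^2\Bigl(1-\frac1{s^2}\Bigr)=2(2\nu+1-s),\qquad\text{i.e.}\qquad \gamma^2=g(s):=\frac{2s^2(2\nu+1-s)}{s^2-1}.\]
This rules out $s\in(0,1]$: for such $s$ the left-hand side is $\le0$, while the right-hand side is $>0$ because $s<2\nu+1$. Hence $s>1$, that is $\Pi_\nu(\gamma)>0$, so $\Pi_\nu''(\gamma)>0$ and the critical point $\gamma_{\nu,n}$ is unique. The relation also shows $s<2\nu+1$, i.e. $\delta_{\nu,n}<1$. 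I would confirm $\delta_{\nu,n}<1$ independently by Rolle applied to $e^{-x^2/4}D_\nu$, whose derivative is $-e^{-x^2/4}D_{\nu+1}$. This yields a zero $\xi$ of $D_{\nu+1}$ in the interval, where $\Pi_\nu(\xi)=1$, $w(\xi)=\xi/2$ and $\Pi_\nu'(\xi)<0$, so $\xi$ is not the minimizer.

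For monotonicity, I would check that $g$ is strictly decreasing on $(1,2\nu+1)$, going from $+\infty$ to $0$. This is a one-variable calculus check on $\log g=2\log s+\log(2\nu+1-s)-\log(s^2-1)$. Since $\gamma_{\nu,n}$ lie in consecutive disjoint intervals, $\gamma_{\nu,1}<\gamma_{\nu,2}<\cdots$. Because $\gamma_{\nu,n}^2=g(s_{\nu,n})$ with $g$ decreasing, the values $s_{\nu,n}=2\nu\delta_{\nu,n}+1$ strictly decrease. This gives $1>\delta_{\nu,1}>\cdots>\delta_{\nu,N-1}>0$, and the Tur\'an inequality with equality at $\gamma_{\nu,n}$ shows that each constant is best possible. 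If the monotonicity of $g$ turned out to be delicate near $s=1$, I would instead parametrize by $\gamma$ and apply the implicit function theorem to $H(\gamma,s)=\gamma^2(s^2-1)-2s^2(2\nu+1-s)$, as in the Coulomb case.
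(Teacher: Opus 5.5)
Your proof is correct, and its skeleton matches the paper's. Your $s=2\nu\Pi_\nu+1$ is $2\nu\,\Omega_\nu$ in the paper's notation, your $g$ satisfies $g(2\nu\sigma)=\zeta_\nu(\sigma)$, and your formula $\Pi_\nu''(\gamma)=2\Pi_\nu(\gamma)(\nu\Pi_\nu(\gamma)+1)$ is the paper's $(4\nu^2s^2-1)/(2\nu)$.

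The genuine difference is how you exclude the branch where $\Pi_\nu''$ could fail to be positive. The paper first uses a maximum-principle argument on $D_\nu''=(\frac{x^2}{4}-\nu-\frac12)D_\nu$ to show that all positive zeros lie below the turning point $2\sqrt{\nu+\frac12}$. Hence every critical point between consecutive zeros has $a^2<4\nu+2$, and the paper then checks that $\Omega_\nu<0$ would force $\zeta_\nu>4\nu+2$. You instead use the energy identity $E=\nu\,\Omega_\nu D_\nu^2=\int_x^\infty\frac t2D_\nu^2(t)\,{\rm d}t$, which gives $\Omega_\nu>0$ on all of $(0,\infty)$ away from the zeros, with no information on where the zeros lie.

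What your route buys:
\begin{itemize}
\item It is global, and in the spirit of the Lommel-type identities of Sections 2 and 5. It shows that $D_\nu'/D_\nu$ is strictly decreasing on every nodal interval, the analogue of $y_\mu'=-x\Phi_\mu<0$.
\item It shows that every critical point of $\Pi_\nu$ anywhere on $(0,\infty)$ is a strict minimum with value in $(0,1)$. This includes the end intervals discussed in Remark \ref{rem:end-branches}.
\item Your check $(\log g)'(s)=-\frac{2}{s(s^2-1)}-\frac1{2\nu+1-s}<0$ is shorter than the paper's cubic factorization.
\end{itemize}
The paper's route gives the zero localization $d_{\nu,j}<2\sqrt{\nu+\frac12}$ as a by-product. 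However, its exclusion of $\Omega_\nu<0$ only applies to critical points below the turning point.

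One detail you should make explicit: $E(x)\to0$ as $x\to\infty$ needs decay of $D_\nu'$ as well, not only of $D_\nu$. This follows from $D_\nu'=\frac x2D_\nu-D_{\nu+1}$ and $D_{\nu+1}(x)\sim x^{\nu+1}e^{-x^2/4}$.
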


\begin{figure}[t]
	\centering
	\includegraphics[width=0.9\textwidth]{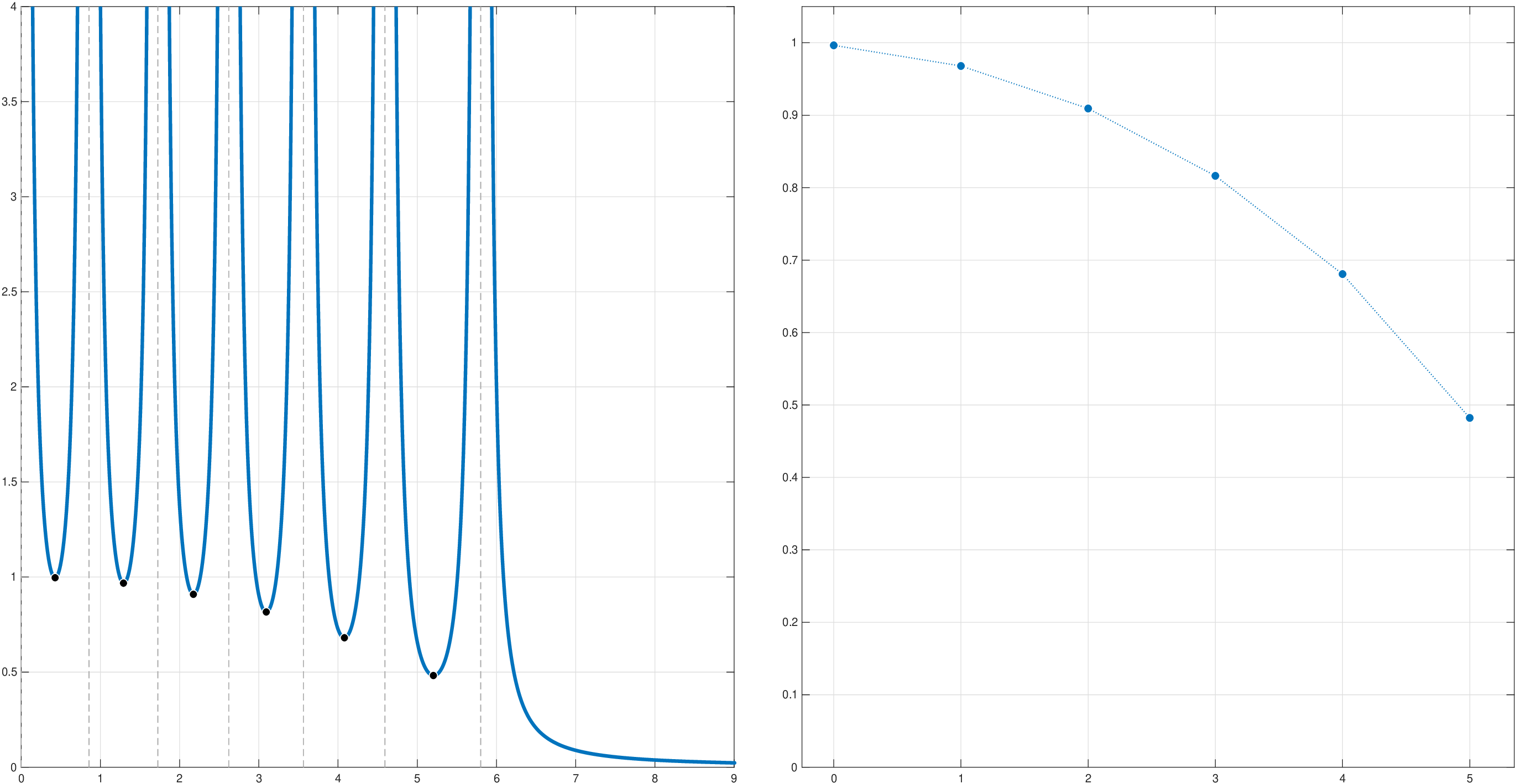}
	\caption{The graph of the seven branches on the positive real axis of the normalized Tur\'anian $x\mapsto \Pi_{\mu}(x)$ for $\mu=13,$ together with the corresponding successive minimum value $\delta_{\mu,n}$ for $\mu=13$ and $n\in\{1,2,\ldots,6\}.$}
	\label{Fig2}
\end{figure}

\begin{proof}[\bf Proof of Theorem \ref{thm:main5}]
Let us consider the expressions
\[r_{\nu}(x)=\frac{D_\nu'(x)}{D_\nu(x)}\quad \mbox{and}\quad \Omega_\nu(x)=\Pi_\nu(x)+\frac{1}{2\nu}.\]
The standard recurrence relations for parabolic cylinder functions are available at \cite[eq. 12.8.2]{nist} and \cite[eq. 12.8.3]{nist}, and they can be rewritten in terms of $D_\nu$ as
$$D_{\nu}'(x)=\nu D_{\nu-1}(x)-\frac{1}{2}xD_{\nu}(x)=\frac{1}{2}xD_{\nu}(x)-D_{\nu+1}(x),$$
and these in turn imply that
\[\Pi_\nu(x)=1-\frac{x^2}{4\nu}+\frac{r_{\nu}^2(x)}{\nu}.\]
By using the Weber differential equation \cite[eq. 12.2.4]{nist}
\[D_\nu''(x)+\left(\nu+\frac12-\frac{x^2}{4}\right)D_\nu(x)=0\]
we obtain
\[r_{\nu}'(x)=-\nu\Omega_\nu(x)\quad \mbox{and}\quad \Omega_\nu'(x)=-2r_{\nu}(x)\Omega_\nu(x)-\frac{x}{2\nu}.\]
Thus $\Pi_\nu$ and $\Omega_\nu$ have the same critical points. Now, let $a>0$ be such a critical point and set $s=\Omega_\nu(a)$. The critical-point equation yields
$r_{\nu}(a)=-{a}/{(4\nu s)}$ and substitution into the preceding representation implies that
\[a^2=\zeta_\nu(s),\quad \mbox{where}\quad \zeta_\nu(s)=\frac{16\nu^2s^2\left(\nu+\frac12-\nu s\right)}{4\nu^2s^2-1}.\]
Moreover, a second differentiation at the critical point gives
\[\Pi_\nu''(a)=\frac{4\nu^2s^2-1}{2\nu}.\]

Now, we are going to show that all positive zeros of $D_\nu$ lie to the left of the turning point
$x_{\mathrm t}=2\sqrt{\nu+\tfrac12}$. For $x>x_{\mathrm t}$ the Weber differential equation becomes
\[D_\nu''(x)=l_\nu(x)D_\nu(x),\quad \mbox{where}\quad l_\nu(x)=\frac{x^2}{4}-\nu-\frac12>0.\]
Suppose that $D_\nu$ has a zero in $[x_{\mathrm t},\infty).$ Since as $x\to+\infty$ we have \cite[12.9.1]{nist} $D_\nu(x)\sim x^\nu e^{-x^2/4}>0,$ there is a last such zero, which we denote by $z.$ Then $D_\nu(x)>0$ for $x>z$ and $D_\nu(z)=0.$ Moreover, we have that $D_\nu(x)\to0$ as $x\to\infty,$ which implies that $D_\nu$ attains a positive maximum at some point $c\in(z,\infty).$ At this point we have $D_\nu'(c)=0,$ and $D_\nu''(c)\le0.$ On the other hand, since $c>z\ge x_{\mathrm t},$ we obtain $D_\nu''(c)=l_\nu(c)D_\nu(c)>0,$ because both $l_\nu(c)$ and $D_\nu(c)$ are positive. This is a contradiction. Therefore every positive zero of $D_\nu$ lies strictly to the left of the turning point, that is we have $d_{\nu,j}<2\sqrt{\nu+\frac12}.$ Consequently, if $a\in(d_{\nu,n},d_{\nu,n+1}),$ then $0<a<d_{\nu,n+1}<2\sqrt{\nu+\frac12}$ and therefore every critical point between consecutive positive zeros satisfies $a^2<4\nu+2.$

Now, suppose that $s<0$. Since $a^2=\zeta_\nu(s)>0$, we necessarily have $2\nu s<-1$ and writing
$u=-2\nu s>1$ we arrive at
\[\zeta_\nu(s)=\frac{2u^2(u+2\nu+1)}{u^2-1}>4\nu+2,\]
contrary to $a^2<4\nu+2$. Hence $s>0$ and the condition $\zeta_\nu(s)>0$ then clearly implies
\[\frac{1}{2\nu}<s<1+\frac{1}{2\nu}.\]
Therefore we arrive at
\[0<\Pi_\nu(a)=s-\frac{1}{2\nu}<1\quad \mbox{and}\quad \Pi_\nu''(a)>0\]
and thus every critical point between consecutive positive zeros is a strict local minimum. At each endpoint of $(d_{\nu,n},d_{\nu,n+1})$ the logarithmic derivative $r_{\nu}$ has a simple pole, and hence $\Pi_\nu(x)\to+\infty$. Therefore at least one interior critical point exists. Since every critical point is a strict minimum, there cannot be two of them in the same interval: two distinct strict minima would force an intervening critical point which is not a minimum. This in turn implies that the critical point $\gamma_{\nu,n}$ is unique and $0<\delta_{\nu,n}<1$.

It remains to compare the minimum values.  At $a=\gamma_{\nu,n}$ we write $s_{\nu,n}=\delta_{\nu,n}+{1}/{(2\nu)}.$ Then $\gamma_{\nu,n}^2=\zeta_\nu(s_n)$ and on the admissible interval $1/(2\nu)<s<1+1/(2\nu)$ we obtain that
\[\zeta_\nu'(s)=-\frac{16\nu^2s\left(4\nu^3s^3-3\nu s+2\nu+1\right)}{(4\nu^2s^2-1)^2}<0.\]
Indeed, with $t=2\nu s>1$, the factor in parentheses equals $\tfrac12(t^3-3t+4\nu+2)$, which is positive because $t^3-3t+2=(t-1)(t^2+t-2)>0$. Since
$\gamma_{\nu,1}<\gamma_{\nu,2}<\ldots<\gamma_{\nu,N-1},$ and $\zeta_\nu$ is strictly decreasing on the admissible branch, it follows that
$s_{\nu,1}>s_{\nu,2}>\ldots>s_{\nu,N-1}$. Subtracting $1/(2\nu)$ gives the asserted strictly decreasing property $1>\delta_{\nu,1}>\delta_{\nu,2}>\ldots>\delta_{\nu,N-1}>0.$
\end{proof}

\begin{remark}\label{rem:end-branches}
{\em We note that Theorem \ref{thm:main5} concerns only the bounded intervals between strictly positive zeros. If $\nu=2k+1$ is an odd integer with $k\geq1$, then $D_\nu(0)=0$ and $(0,d_{\nu,1})$ is another pole-to-pole interval. The proof of the above theorem applies to it without change: it contains a unique strict minimum. If its
value is denoted by $\delta_{\nu,0}$, then for $N\geq2$ we obtain that $\delta_{\nu,0}>\delta_{\nu,1}>\ldots>\delta_{\nu,N-1}>0.$ However, if $\nu>1$ is not an odd integer, $D_\nu(0)\neq0$ and $\Pi_\nu$ extends continuously to the origin, with
\[ \Pi_\nu(0)=1+\frac1\nu\left(\frac{D_\nu'(0)}{D_\nu(0)}\right)^2\geq1.\]
On $[0,d_{\nu,1})$ its minimum is either the endpoint value at $0$ or a unique interior minimum. When $N\geq2$, this initial minimum is strictly larger than $\delta_{\nu,1}$: this is immediate from the displayed endpoint bound if the minimum occurs at $0$, and otherwise follows from the strictly decreasing property of $\zeta_\nu$ and the fact that the initial critical point lies to the left of $\gamma_{\nu,1}$. The unbounded interval $(d_{\nu,N},\infty)$ is not part of the finite sequence in the theorem.}
\end{remark}

\section{\bf An inclusion property of the Tur\'anian of Bessel functions in the complex plane}
\setcounter{equation}{0}

The surprising use of hypergeometric functions in the proof of Bieberbach conjecture by L. de Branges in 1985 has prompted renewed interest in these classes of functions. By using the method of differential subordinations, S.S. Miller and P.T. Mocanu \cite{MillerMocanu1990} determined conditions for the univalence, starlikeness and convexity of Gaussian and confluent hypergeometric functions, and in addition obtained a subordination result for these classes of functions. Motivated by the results in \cite{MillerMocanu1990} on the inclusion (or subordination) property of confluent Kummer and Gaussian hypergeometric functions, S. Andr\'as and \'A. Baricz \cite{AndrasBaricz2009} proved that for $p>q\geq-1/4$ the inclusion $\mathcal{J}_p(\mathbb{D})\subset\mathcal{J}_q(\mathbb{D})$ of the egg-shaped domains is valid, where $$\mathcal{J}_p(z)=2^p\Gamma(p+1)z^{-p}J_p(z),$$ $J_p$ stands for the Bessel function of the first kind, as before, and $\mathbb{D}=\{z\in\mathbb C:|z|<1\}$ is the open unit disk in the complex plane. Moreover, they conjectured that the same property holds true for $p>q>-1.$ This conjecture has been verified recently by L.-I. Cot\^irl\u{a} and R. Sz\'asz \cite{CotirlaSzasz2024} by using an approach based on subordinating factor sequence, which was introduced by H.S. Wilf \cite{Wilf1961} in 1961 for convex conformal mappings on the unit circle. The above natural inclusion (or subordination) property of Bessel functions of the first kind created a considerable interest and recently it has been shown by various researchers that Lommel functions, Mittag-Leffler functions, Wright functions, Le Roy-type hypergeometric functions, Struve functions, $q$-Bessel functions, $q$-Struve functions and a Ramanujan type entire function, for example, satisfy a similar inclusion property. For more details the interested readers can view the papers \cite{AlenaziMehrez2025}, \cite{AlenaziMehrez2026}, \cite{CotirlaSzasz2023}, \cite{CotirlaSzasz2024}, \cite{DenizSzasz2024}, \cite{KumariPrajapat2026}, \cite{MehrezAlenazi2026}, \cite{OzkanKorkmazDeniz2025} and \cite{OzkanKorkmazDenizCaglar2026}, and the references therein.

Motivated by the above mentioned results on various special functions and by the results proved in Section 2 of the present paper, in this section our aim is to show that the Tur\'anian of Bessel functions of the first kind naturally satisfies a similar inclusion (or subordination) result. The technique used in this section is also based on subordinating factor sequence, however, as we can see below the case of the Tur\'anian of Bessel functions of the first kind is more complicated and consequently the proof is more technical. Moreover, we also show what is in the heart of all these inclusion properties of various special functions, for more details see Remark \ref{remliter}.

We first recall some basic definitions. Let $f$ and $g$ two functions, which are analytic in $\mathbb{D}.$ We say that $f$ is {\em subordinate} to $g$ and write $f\prec g$ or $f(z)\prec g(z)$ if there exists a Schwarz function $\omega(z),$ which maps $\D$ into $\D$ and $\omega(0)=0,$ such that $f(z)=g(\omega(z))$ for all $z\in\mathbb{D}.$ If $g$ is univalent (locally injective/one-to-one) on $\D$ and $f(0)=g(0),$ then $f\prec g$ is equivalent to the simpler condition $f(\D)\subset g(\D).$ Now, according to H.S. Wilf
\cite[p. 690]{Wilf1961}, a sequence $\{b_n\}_{n\ge1}$ is called a {\em subordinating factor sequence} if, for every convex univalent function $f(z)=a_1z+a_2z^2+\ldots+a_nz^n+\ldots,$ $a_1\neq0,$ we have $g\prec f,$ where $g(z)=a_1b_1z+a_2b_2z^2+\ldots+a_nb_nz^n+\ldots,$ $z\in\D.$ It is known (see \cite[Theorem 2]{Wilf1961}) that a sequence $\{b_n\}_{n\ge1}$ is a subordinating factor sequence if and only if the real part of $\frac{1}{2}+b_1z+b_2z^2+\ldots+b_nz^n+\ldots$ is strictly positive for all $z\in\D.$

Now, for $\nu\ge0$ consider the Tur\'anian of Bessel functions of the first kind
\[\Delta_\nu(z)=J_\nu^2(z)-J_{\nu-1}(z)J_{\nu+1}(z)\]
and its normalization
$$\mathcal{T}_\nu(z)=2^{2\nu}\Gamma(\nu+1)\Gamma(\nu+2)z^{-2\nu}\Delta_\nu(z),$$
which in view of \cite[eq. (16)]{MezoBaricz2017} can be rewritten as
$$\mathcal{T}_\nu(z)={}_1F_2\left(\nu+\frac12;\nu+2,2\nu+1;-z^2\right).$$
The aim of this section is to prove the inclusion property $\mathcal{T}_p(\D)\subset\mathcal{T}_q(\D)$ for $p>q\geq0.$ As we shall see below, the key step in the proof of the above subordination property is a verification of Wilf's condition (related to subordinating factor sequences) for the hypergeometric multiplier
\begin{equation}\Phi_{p,q}(z)={}_4F_3\!\left(
\begin{matrix}
1,\ p+\frac12,\ q+2,\ 2q+1\\
q+\frac12,\ p+2,\ 2p+1
\end{matrix};z
\right)=\sum_{n\geq0}\beta_n(p,q)z^n,
\label{eq:PhiDef}
\end{equation}
where in terms of the shifted factorial $(\alpha)_n=\alpha(\alpha+1)\ldots(\alpha+n-1)=\Gamma(\alpha+n)/\Gamma(\alpha)$ we have
$$\beta_n(p,q)=\frac{A_n(p)}{A_n(q)}=\frac{(p+\frac12)_n(q+2)_n(2q+1)_n}{(q+\frac12)_n(p+2)_n(2p+1)_n}\quad \mbox{and}\quad
A_n(x)=\frac{(x+\frac12)_n}{(x+2)_n(2x+1)_n n!}.$$

In order to prove the main result of this section we start with the following preliminary result.

\begin{lemma}\label{lem:gpositive}
If for $s\ge0,$ $\alpha\in(0,2\pi)$ and $t\in(0,1)$ we consider
$$f_s(t)=t^{s+1}+2t^{2s}-t^{s-\frac12}\quad \mbox{and}\quad u_\alpha(t)=\frac{1+t}{1-2t\cos\alpha+t^2},$$
then
$$g_s(\alpha)=\int_0^1 f_s(t)u_\alpha(t){\rm d}t>0$$
for all $s\geq 0$ and $\alpha\in(0,2\pi).$
\end{lemma}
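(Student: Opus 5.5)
Throughout, write $c=\cos\alpha\in[-1,1)$, so that $u_\alpha(t)=(1+t)/(1-2ct+t^2)$ is positive and continuous on $[0,1]$ for $\alpha\in(0,2\pi)$. The plan is to separate the sign structure of $f_s$ from the shape of the kernel. Three preliminary observations come first.

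\emph{Sign of $f_s$.} Write $f_s(t)=t^{s-\frac12}\bigl(t^{3/2}+2t^{s+\frac12}-1\bigr)$. The bracket is strictly increasing in $t$, equals $-1$ at $t=0$ and $2$ at $t=1$. Hence there is a unique $t_0=t_0(s)\in(0,1)$ with $f_s<0$ on $(0,t_0)$ and $f_s>0$ on $(t_0,1)$.

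\emph{Total mass of $f_s$.} A direct computation gives
\[\int_0^1 f_s(t)\,{\rm d}t=\frac1{s+2}+\frac{2}{2s+1}-\frac{1}{s+\frac12}=\frac1{s+2}>0.\]

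\emph{Shape of the kernel.} The numerator of $\partial_t u_\alpha$ is $1+2c-2t-t^2$. Hence $u_\alpha$ is unimodal on $(0,1)$: it increases up to $t^*=\sqrt{2+2c}-1$ and decreases afterwards. In particular $u_\alpha$ is nondecreasing on all of $(0,1)$ when $c\ge\frac12$, and nonincreasing on all of $(0,1)$ when $c\le-\frac12$.

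\textbf{Step 1 (increasing kernel).} Whenever $u_\alpha$ is nondecreasing on $(0,1)$, I use the standard one-sign-change comparison. On $(0,t_0)$ we have $f_s<0$ and $u_\alpha\le u_\alpha(t_0)$; on $(t_0,1)$ we have $f_s>0$ and $u_\alpha\ge u_\alpha(t_0)$. Therefore
\[g_s(\alpha)\ge u_\alpha(t_0)\int_0^1 f_s(t)\,{\rm d}t=\frac{u_\alpha(t_0)}{s+2}>0.\]
This settles $c\ge\frac12$. More generally, the same argument works whenever $t^*\ge t_0$, after also checking that $u_\alpha(1)=1/(1-c)\ge u_\alpha(t_0)$.

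\textbf{Step 2 (series in a positive parameter).} For the remaining range I rewrite the denominator as $1-2ct+t^2=(1+t)^2-\lambda t$ with $\lambda=2(1+c)\in[0,4)$. Since $\lambda t<(1+t)^2$ on $[0,1]$ for $c<1$, the geometric expansion
\[u_\alpha(t)=\sum_{k\ge0}\lambda^k\,\frac{t^k}{(1+t)^{2k+1}}\]
converges uniformly on $[0,1]$, and every coefficient $\lambda^k$ is nonnegative. It therefore suffices to prove
\[I_k(s)=\int_0^1 f_s(t)\,\frac{t^k}{(1+t)^{2k+1}}\,{\rm d}t>0\quad\mbox{for all } k\ge0,\ s\ge0,\]
with at least the $k=0$ term strictly positive. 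This reduction needs no case split in $\alpha$.

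For $k\ge1$, the weight $w_k(t)=t^k(1+t)^{-2k-1}$ increases on $(0,k/(k+1))$ and decreases afterwards. Moreover $f_s\,t^k$ has the same single sign change at $t_0$ as $f_s$. I would repeat the comparison of Step 1 against $f_s\,t^k$, whose integral $\frac1{s+k+2}+\frac2{2s+k+1}-\frac1{s+k+\frac12}$ I expect to be positive. The tail $(k/(k+1),1)$, where $w_k$ decreases, has to be handled by bounding $w_k$ from below by $w_k(1)=2^{-2k-1}$ there. I then need to compare this with the gain obtained on $(t_0,k/(k+1))$.

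\textbf{Main obstacle.} I expect the hardest part to be the case $k=0$, that is, $c=-1$ in the original formulation. Here the kernel $1/(1+t)$ is strictly decreasing, so it gives the most weight exactly where $f_s$ is negative, and the integral has to be evaluated essentially exactly. My first attempt would write
\[\int_0^1\frac{t^a}{1+t}\,{\rm d}t=\tfrac12\Bigl[\psi\bigl(\tfrac{a+2}2\bigr)-\psi\bigl(\tfrac{a+1}2\bigr)\Bigr]=\sum_{m\ge0}\frac{(-1)^m}{a+1+m}=:\beta(a+1).\]
This reduces the claim to
\[\beta(s+2)+2\beta(2s+1)-\beta\bigl(s+\tfrac12\bigr)>0.\]
I would then prove this inequality using the complete monotonicity of $\beta$, in particular its convexity and the relation $\beta(x)+\beta(x+1)=1/x$. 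The boundary case $s=0$ and the limit $s\to\infty$ would be checked separately.

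If that route fails, the fallback is to abandon the series and treat $c\in[-1,\frac12)$ directly. In that case I would split $\int_0^1$ at $t_0$ and bound $\int_0^{t_0}|f_s|\,u_\alpha$ by $u_\alpha(t^*)\int_0^{t_0}|f_s|$, using the explicit antiderivative of $f_s$ and the explicit location of $t_0$.
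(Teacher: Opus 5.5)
\textbf{Gap: the case $\alpha=\pi$ is never proved.} Your Step 2 reduction is correct, but the decisive estimate is left unproved. Everything rests on $I_0(s)=\int_0^1 f_s(t)(1+t)^{-1}\,{\rm d}t>0$, and there you only state an intention.

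Complete monotonicity of $\beta$ just re-expresses the same integral via $t=e^{-u}$. The one-step bound $\beta(x)>1/(2x)$, which follows from monotonicity and $\beta(x)+\beta(x+1)=1/x$, is already too weak at $s=0$. So a sharper two-sided estimate, uniform in $s\ge0$, would have to be worked out, and none is supplied. The fallback cannot work as described either, since $t_0$ (the root of $t^{3/2}+2t^{s+1/2}=1$) has no explicit location.

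The missing idea is a mass comparison that avoids locating $t_0$. Put $a=s+\frac12\ge\frac12$, $\varepsilon_1=-\int_0^{t_0}f_s$ and $\varepsilon_2=\int_{t_0}^1 f_s$. Dropping $t^{s+1}$ on $(0,t_0)$ gives $\varepsilon_1<\frac1a t_0^a(1-t_0^a)\le\frac1{4a}\le\frac1{a+3/2}=\varepsilon_2-\varepsilon_1$, hence $\varepsilon_2>2\varepsilon_1$. Since $\frac12\le(1+t)^{-1}\le1$ on $[0,1]$, this yields $I_0(s)>-\varepsilon_1+\frac12\varepsilon_2>0$.

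\textbf{The terms $k\ge1$, and Step 1.} Your treatment of $k\ge1$ is also unfinished: the tail on $(k/(k+1),1)$ is never compared with the gain on $(t_0,k/(k+1))$. It is, however, unnecessary. The ratio $w_k/w_0=(t/(1+t)^2)^k$ is increasing on $(0,1)$, so the one-sign-change comparison you already use gives $I_k(s)\ge\bigl(t_0/(1+t_0)^2\bigr)^k I_0(s)>0$.

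In fact, summing your series shows that $r_\alpha=u_\alpha/u_\pi=(1+t)^2/(1-2t\cos\alpha+t^2)=\sum_{k\ge0}\lambda^k(t/(1+t)^2)^k$ is nondecreasing on $(0,1)$. This is how the paper reduces every $\alpha$ to $\alpha=\pi$ at once, via $g_s(\alpha)\ge r_\alpha(t_0)\,g_s(\pi)$.

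The comparison has to be made against this ratio, not against $u_\alpha$ itself. Your claim that $u_\alpha$ is nondecreasing on $(0,1)$ for $c\ge\frac12$ is false. The numerator $1+2c-2t-t^2$ of $u_\alpha'$ equals $2c-2<0$ at $t=1$ for every $c<1$, and $t^*\ge1$ only if $c\ge1$. So Step 1 does not settle $c\ge\frac12$.
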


\begin{proof}[\bf Proof of Lemma \ref{lem:gpositive}]
If $a=s+1/2\ge1/2,$ then $f_s(t)=t^{a-1}\left(t^{3/2}+2t^a-1\right).$ The function $t\mapsto v_a(t)=t^{3/2}+2t^a-1$
is strictly increasing on $(0,1)$ with $v_a(0^+)=-1$ and $v_a(1)=2.$ Consequently, there is a unique $\tau\in(0,1)$ such that $f_s(t)<0$ for all $0<t<\tau$ and $f_s(t)>0$ for all $\tau<t<1.$ Now, we consider the following integrals
\[\varepsilon_1=-\int_0^\tau f_s(t){\rm d}t\quad \mbox{and} \quad \varepsilon_2=\int_\tau^1 f_s(t){\rm d}t.\]
Observe that
$$\varepsilon_2-\varepsilon_1=\int_0^1 f_s(t){\rm d}t=\frac1{s+2}=\frac1{a+\frac32}.$$
On the other hand, for $0<t<\tau$ we have $-f_s(t)<t^{a-1}(1-2t^a),$ which implies that
$$\varepsilon_1<\int_0^{\tau}t^{a-1}(1-2t^a){\rm d}t=\frac{1}{a}\tau^a(1-\tau^a)<\frac{1}{4a}\le\frac1{a+\frac32},$$
where we used that $a\ge1/2$ and the fact that since $\tau^{3/2}+2\tau^a=1$ we have $0<\tau^a<1/2.$ Thus, we arrive at $\varepsilon_1<\varepsilon_2-\varepsilon_1,$ which in turn implies that
$$g_s(\pi)=\int_0^\tau\frac{f_s(t)}{1+t}{\rm d}t+\int_\tau^1\frac{f_s(t)}{1+t}{\rm d}t>-\varepsilon_1+\frac12\varepsilon_2>0.$$
Now, we fix $0<\alpha<2\pi$ and we define
$$r_\alpha(t)=\frac{u_\alpha(t)}{u_\pi(t)}=\frac{(1+t)^2}{1-2t\cos\alpha+t^2}.$$
Since for all $\alpha\in(0,2\pi)$ and $t\in(0,1)$ we have
$$r_\alpha'(t)=\frac{2(1+\cos\alpha)(1-t)(1+t)}{(1-2t\cos\alpha+t^2)^2}\ge0,$$
clearly $r$ is nondecreasing, and consequently for all $\alpha\in(0,2\pi)$ and $t\in(0,1)$ we obtain that
$$f_s(t)\left(r_\alpha(t)-r_\alpha(\tau)\right)\ge0$$
and
$$g_s(\alpha)=r_\alpha(\tau)g_s(\pi)+\int_0^1f_s(t)u_\pi(t)\left(r_\alpha(t)-r_\alpha(\tau)\right){\rm d}t>0.$$
Here we used tacitly that, since $1-2\tau\cos\alpha+\tau^2=\left|1-\tau e^{\mathrm{i}\alpha}\right|^2>0,$ we have $r_{\alpha}(\tau)>0.$
\end{proof}

Now, let us recall the definition of positive and conditionally negative
definite functions on $\mathbb{Z}.$ By definition, an even function $\varphi:\mathbb{Z}\to\mathbb{R}$ is {\em positive definite} if
\[\sum_{j,k=1}^N c_j\overline{c_k}\varphi(m_j-m_k)\ge0\]
for every $N$ positive integer, every $m_1,\ldots,m_N\in\mathbb{Z}$ and every $c_1,\ldots,c_N\in\mathbb{C}.$ Moreover, an even function
$\psi:\mathbb{Z}\to\mathbb{R}$ with $\psi(0)=0$ is {\em conditionally negative definite} if
\[\sum_{j,k=1}^N c_j\overline{c_k}\psi(m_j-m_k)\le0\]
whenever $\sum_{j=1}^N c_j=0.$

In view of the preceding lemma, we arrive to the following important preliminary result.

\begin{lemma}\label{prop:CND}
If for every $x\ge0$ and $n\ge0$ we consider the expression
\[\lambda_n(x)=-\frac{\partial}{\partial x}\log A_n(x),\quad \lambda_0(x)=0,\]
then the even extension $n\mapsto\lambda_{|n|}(x)$ for $n\in\mathbb{Z}$ is conditionally negative definite. Consequently, for $p>q\geq0$ the even sequence
$n\mapsto\beta_{|n|}(p,q)$ for $n\in\Z$ is positive definite.
\end{lemma}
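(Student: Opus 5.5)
The approach is to write $\lambda_n(x)$ as a superposition, against a positive density, of the elementary conditionally negative definite functions $n\mapsto 1-\cos(n\alpha)$. Lemma \ref{lem:gpositive} will supply the positivity of that density, and Schoenberg's theorem will then pass from $\lambda$ to $\beta$.

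\textbf{Step 1: digamma form of $\lambda_n$.} Since $A_n(x)=(x+\frac12)_n/\bigl((x+2)_n(2x+1)_n n!\bigr)$ and $\frac{\partial}{\partial x}\log(a+x)_n=\psi(x+a+n)-\psi(x+a)$, we get
\[\lambda_n(x)=-\bigl[\psi(x+\tfrac12+n)-\psi(x+\tfrac12)\bigr]+\bigl[\psi(x+2+n)-\psi(x+2)\bigr]+2\bigl[\psi(2x+1+n)-\psi(2x+1)\bigr].\]
The term from $n!$ does not depend on $x$, so it drops out.

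\textbf{Step 2: integral form.} Apply $\psi(a+n)-\psi(a)=\int_0^1 t^{a-1}\frac{1-t^n}{1-t}\,{\rm d}t$ to each bracket. Collecting terms gives
\[\lambda_n(x)=\int_0^1\frac{1-t^{n}}{1-t}\,f_x(t)\,{\rm d}t,\]
where $f_x(t)=t^{x+1}+2t^{2x}-t^{x-\frac12}$ is exactly the function $f_s$ of Lemma \ref{lem:gpositive} with $s=x$.

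\textbf{Step 3: Fourier form via the Poisson kernel.} For $0\le t<1$ the Poisson kernel gives $t^{|n|}=\frac1{2\pi}\int_0^{2\pi}\frac{1-t^2}{1-2t\cos\alpha+t^2}\cos(n\alpha)\,{\rm d}\alpha$, and the same integral without $\cos(n\alpha)$ equals $1$. Hence
\[\frac{1-t^{|n|}}{1-t}=\frac1{2\pi}\int_0^{2\pi}(1-\cos n\alpha)\,u_\alpha(t)\,{\rm d}\alpha ,\]
with $u_\alpha$ as in Lemma \ref{lem:gpositive}. Substituting this into Step 2 and exchanging the order of integration yields
\[\lambda_{|n|}(x)=\frac1{2\pi}\int_0^{2\pi}(1-\cos n\alpha)\,g_x(\alpha)\,{\rm d}\alpha .\]
By Lemma \ref{lem:gpositive}, $g_x(\alpha)>0$ for every $x\ge0$ and $\alpha\in(0,2\pi)$.

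\textbf{Step 4: the main obstacle, justifying Fubini.} This is the technical point I expect to be hardest. The integrand $|f_x(t)|u_\alpha(t)(1-\cos n\alpha)$ is singular near $t=1,\ \alpha\in\{0,2\pi\}$, and also near $t=0$ when $x<\frac12$.
\begin{itemize}
\item Near $t=0$, $|f_x(t)|\le Ct^{x-1/2}$, which is integrable, and $u_\alpha(t)$ stays bounded there.
\item Near $\alpha=0$, $1-\cos n\alpha\le n^2\alpha^2/2$, while $\int_0^1 u_\alpha(t)\,{\rm d}t=O(\log(1/|\alpha|)+1)$ for $t\in[\frac12,1)$. Together these give absolute integrability on $(0,1)\times(0,2\pi)$.
\end{itemize}
So Tonelli/Fubini applies. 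The boundary case $n=0$ is trivial.

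\textbf{Step 5: conditional negative definiteness of $\lambda$.} Each $n\mapsto 1-\cos(n\alpha)$ is conditionally negative definite on $\mathbb Z$. Indeed, $\sum c_j\overline{c_k}\bigl(1-\cos((m_j-m_k)\alpha)\bigr)=-\tfrac12\bigl|\sum c_je^{{\rm i}m_j\alpha}\bigr|^2-\tfrac12\bigl|\sum c_je^{-{\rm i}m_j\alpha}\bigr|^2\le0$ whenever $\sum c_j=0$. Conditionally negative definite functions form a convex cone closed under positive integration. Hence $n\mapsto\lambda_{|n|}(x)$ is conditionally negative definite; it is even and vanishes at $n=0$.

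\textbf{Step 6: positive definiteness of $\beta$.} For $p>q\ge0$,
\[\beta_{|n|}(p,q)=\frac{A_{|n|}(p)}{A_{|n|}(q)}=\exp\Bigl(-\int_q^p\lambda_{|n|}(x)\,{\rm d}x\Bigr).\]
The exponent $\Lambda(n)=\int_q^p\lambda_{|n|}(x)\,{\rm d}x$ is again conditionally negative definite, being a positive integral of such functions; finiteness follows from continuity in $x$. By Schoenberg's theorem, $e^{-s\Lambda}$ is positive definite for every $s>0$, and taking $s=1$ gives the claim.
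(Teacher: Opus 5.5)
Your proposal is correct and follows the paper's proof essentially step for step. It uses the same representation $\lambda_n(x)=\int_0^1\frac{1-t^n}{1-t}f_x(t)\,{\rm d}t$, the same Poisson-kernel rewriting $\lambda_n(x)=\frac1{2\pi}\int_0^{2\pi}(1-\cos n\alpha)\,g_x(\alpha)\,{\rm d}\alpha$ with $g_x>0$ from Lemma \ref{lem:gpositive}, and the same cone-closure and Schoenberg argument for $\beta_{|n|}(p,q)$.

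The only slip is in your Step 4: $\int_{1/2}^1u_\alpha(t)\,{\rm d}t$ actually grows like $\pi/|\alpha|$, not logarithmically. This is harmless, since $1-\cos n\alpha=O(\alpha^2)$, and the paper avoids such estimates altogether via Tonelli: integrating the nonnegative function $|f_x(t)|(1-\cos n\alpha)u_\alpha(t)$ in $\alpha$ first gives exactly $2\pi|f_x(t)|\frac{1-t^n}{1-t}$, which is integrable on $(0,1)$.
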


\begin{proof}[\bf Proof of Lemma \ref{prop:CND}]
Observe that differentiating $\log A_n(x)$ with respect to $x$ we arrive at
$$\lambda_n(x)=\sum_{k=0}^{n-1}\left(\frac1{k+x+2}+\frac2{k+2x+1}-\frac1{k+x+\frac12}\right)=\int_0^1
\frac{1-t^n}{1-t}f_x(t){\rm d}t.$$
On the other hand, recall that the well-known Poisson kernel
\begin{equation}\label{kernPoisson}P_t(\alpha)=\frac{1-t^2}{1-2t\cos\alpha+t^2}=1+2\sum_{m\geq1}t^m\cos(m\alpha)\end{equation}
satisfies the next two standard Fourier coefficient identities
$$\frac{1}{2\pi}\int_0^{2\pi}P_t(\alpha){\rm d}\alpha=1\quad \mbox{and}\quad \frac{1}{2\pi}\int_0^{2\pi}P_t(\alpha)\cos(n\alpha){\rm d}\alpha=t^n,$$
where the second identity follows from the Fourier series expansion and the orthogonality of Chebyshev polynomials of the first kind. These identities imply that $$\frac1{2\pi}\int_0^{2\pi}(1-\cos(n\alpha))u_\alpha(t){\rm d}\alpha=\frac{1-t^n}{1-t}.$$
Combining the above relations we arrive at
$$\lambda_n(x)=\frac1{2\pi}\int_0^{2\pi}(1-\cos(n\alpha)) g_x(\alpha){\rm d}\alpha,$$
where the interchange of the integrals is justified
after multiplication by $1-\cos(n\alpha)$, since
\[\int_0^1 \left|f_x(t)\right|\frac{1-t^n}{1-t}{\rm d}t<\infty.\]
Now, let $m_1,\ldots,m_N\in\Z$ and $c_1,\ldots,c_N\in\mathbb{C}$ satisfy
$\sum_{j=1}^Nc_j=0.$ In view of the above representation of $\lambda_n(x)$ we arrive at
\begin{align*}
\sum_{j,k=1}^Nc_j\overline{c_k}\lambda_{|m_j-m_k|}(x)&=\frac{1}{2\pi}
\int_0^{2\pi}g_x(\alpha)\sum_{j,k=1}^{N}c_j\overline{c_k}\left(1-\cos\left((m_j-m_k)\alpha\right)\right){\rm d}\alpha\\
&=-\frac1{2\pi}\int_0^{2\pi}g_x(\alpha)\left|\sum_{j=1}^N c_je^{\mathrm{i}m_j\alpha}\right|^2{\rm d}\alpha\le0,
\end{align*}
because $g_x(\alpha)>0$ by Lemma \ref{lem:gpositive}. Thus $n\mapsto\lambda_{|n|}(x)$ is conditionally negative definite.

Now, observe that by definition of $\beta_n(p,q)$ we have
$$-\log\beta_n(p,q)=\int_q^p\lambda_n(x){\rm d}x.$$
Since the cone of conditionally negative definite functions is closed under positive integration, $n\mapsto\Lambda(n):=-\log\beta_{|n|}(p,q)$ is conditionally negative definite on $\Z$ with $\Lambda(0)=0.$ On the other hand, Schoenberg's theorem (see for example \cite[Proposition 4.4]{SchillingSongVondracek2012}) states that if the function $\psi$ is conditionally negative definite, then $\psi(0)\geq0$ and $s\mapsto e^{-t\psi(s)}$ is positive definite for every $t>0.$ Thus, Schoenberg's theorem now implies that $n\mapsto e^{-t\Lambda(n)}$ is positive definite on $\mathbb{Z}$ for all $t>0,$ and in particular $n\mapsto e^{-\Lambda(n)}=\beta_{|n|}(p,q)$ is positive definite on $\Z$ for all $p>q\geq0.$
\end{proof}

We can now verify Wilf's condition for the hypergeometric multiplier.

\begin{lemma}\label{lemWilfcond}
If $p>q\geq0,$ then the real part of $\frac{1}{2}+\beta_1(p,q)z+\beta_2(p,q)z^2+\ldots+\beta_n(p,q)z^n+\ldots$ is strictly positive for all $z\in\D,$ and
consequently the sequence $\{\beta_n(p,q)\}_{n\ge1}$ is a subordinating factor sequence.
\end{lemma}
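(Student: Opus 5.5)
The plan is to use the positive definiteness from Lemma \ref{prop:CND} and Herglotz's theorem. Since $n\mapsto\beta_{|n|}(p,q)$ is positive definite on $\mathbb{Z}$ with $\beta_0(p,q)=1$, the Herglotz (Bochner) theorem gives a probability measure $\sigma$ on $[0,2\pi)$ with $\beta_n(p,q)=\int_0^{2\pi}\cos(n\alpha)\,{\rm d}\sigma(\alpha)$. The measure is real and symmetric because the sequence is real and even. We also need $\beta_n(p,q)$ to be bounded, in fact $\beta_n\to$ a finite limit; this is automatic, since positive definiteness gives $|\beta_n|\le\beta_0=1$.

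Next I would sum the series under the integral. For $z\in\D$,
\[\frac12+\sum_{n\ge1}\beta_n(p,q)z^n=\int_0^{2\pi}\Big(\frac12+\sum_{n\ge1}z^n\cos(n\alpha)\Big){\rm d}\sigma(\alpha)=\frac12\int_0^{2\pi}\real\!\left(\frac{1+ze^{\mathrm{i}\alpha}}{1-ze^{\mathrm{i}\alpha}}\right)\!{\rm d}\sigma(\alpha)+\frac{\mathrm{i}}{2}\int_0^{2\pi}\operatorname{Im}(\cdots)\,{\rm d}\sigma.\]
More cleanly, by symmetry of $\sigma$ we can write $\cos(n\alpha)$ as the average of $e^{\pm\mathrm{i}n\alpha}$, so the series equals $\frac12\int\frac{1+ze^{\mathrm{i}\alpha}}{1-ze^{\mathrm{i}\alpha}}\,{\rm d}\sigma(\alpha)$. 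Its real part is $\frac12\int P_{|z|}(\alpha+\arg z)\,{\rm d}\sigma(\alpha)$, using the Poisson kernel \eqref{kernPoisson}. Interchanging sum and integral is justified by uniform convergence for $|z|\le r<1$.

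Strict positivity then follows because $P_r(\theta)\ge\frac{1-r}{1+r}>0$ for every $\theta$ and $\sigma$ is a probability measure. This gives real part $\ge\frac{1-|z|}{2(1+|z|)}>0$. Wilf's criterion \cite[Theorem 2]{Wilf1961} then yields that $\{\beta_n(p,q)\}_{n\ge1}$ is a subordinating factor sequence.

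The main obstacle is being careful with the Herglotz representation on $\mathbb{Z}$. We need positive definiteness in the complex-coefficient sense, which Lemma \ref{prop:CND} supplies, and the normalization $\beta_0=1$, which holds because $A_0=1$. Beyond that the argument is routine.
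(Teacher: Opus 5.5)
Your proposal is correct and follows the same route as the paper: you take positive definiteness of $n\mapsto\beta_{|n|}(p,q)$ from Lemma \ref{prop:CND}, apply the Herglotz representation by a probability measure on the circle, sum the geometric series under the integral to get a Poisson integral, and conclude with Wilf's criterion. The differences are cosmetic: you use the symmetric cosine form and record the explicit bound $\real(\cdot)\ge\frac{1-|z|}{2(1+|z|)}$, whereas the paper works with $e^{-\mathrm{i}n\alpha}$ directly and only notes that the Poisson integral is positive. Also, your aside that $\beta_n$ converges does not follow from $|\beta_n|\le 1$, but that aside is not needed for the argument.
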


\begin{proof}[\bf Proof of Lemma \ref{lemWilfcond}]
By the classical Herglotz theorem (see for example \cite[p. 39]{Katznelson2004}) a positive definite sequence $\{\varphi(n)\}_{n\in\Z}$ with $\varphi(0)=1$ is the Fourier sequence of a probability measure on the unit circle. In view of Lemma \ref{prop:CND} and the Herglotz theorem there exists a probability measure $\mu_{p,q}$ on $[0,2\pi)$ such that
$$\beta_n(p,q)=\int_0^{2\pi}e^{-\mathrm{i}n\alpha}{\rm d}\mu_{p,q}(\alpha),\quad n\in\Z,$$
and consequently
$$\Phi_{p,q}(z)=\sum_{n\geq0}\left(\int_0^{2\pi}e^{-\mathrm{i}n\alpha}{\rm d}\mu_{p,q}(\alpha)\right)z^n=
\int_0^{2\pi}\sum_{n\geq0}\left(ze^{-\mathrm{i}\alpha}\right)^n{\rm d}\mu_{p,q}(\alpha)=
\int_0^{2\pi}\frac{{\rm d}\mu_{p,q}(\alpha)}{1-ze^{-\mathrm{i}\alpha}}.$$
Now, since $\mu_{p,q}$ is a probability measure, we arrive at
$$2\Phi_{p,q}(z)-1=1+2\sum_{n\geq1}\beta_n(p,q)z^n=\int_0^{2\pi}\frac{1+ze^{-\mathrm{i}\alpha}}{1-ze^{-\mathrm{i}\alpha}}{\rm d}\mu_{p,q}(\alpha),$$
which in turn implies that for all $z\in\D$ we have
$$\real\left(2\Phi_{p,q}(z)-1\right)=\int_0^{2\pi}\frac{1-|z|^2}{|1-ze^{-\mathrm{i}\alpha}|^2}{\rm d}\mu_{p,q}(\theta)>0$$
and thus, according to \cite[Theorem 2]{Wilf1961}, the sequence $\{\beta_n(p,q)\}_{n\ge1}$ is indeed a subordinating factor sequence.
\end{proof}

Now, we define the function
$$F_\nu(z)={}_1F_2\left(\nu+\frac12;\nu+2,2\nu+1;-z\right)=1+\sum_{n\geq1}(-1)^nA_n(\nu)z^n.$$
Observe that $\mathcal{T}_\nu(z)=F_\nu(z^2)$ and since $z\mapsto z^2$ maps $\D$ onto $\D,$ we have $\mathcal{T}_\nu(\D)=F_\nu(\D).$

We now focus on the convexity needed in the subordination argument of H.S. Wilf.

\begin{lemma}\label{lem:convex}
For every $q\ge0,$ the function
\[G_q(z)=\frac{1-F_q(z)}{A_1(q)}\]
is convex and univalent in $\D.$
\end{lemma}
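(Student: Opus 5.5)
The plan is to verify convexity through the classical coefficient criterion: if $h(z)=z+\sum_{n\ge2}a_nz^n$ satisfies $\sum_{n\ge2}n^2|a_n|\le1$, then $h$ is convex and univalent in $\D$. This holds because $zh''/h'$ then maps $\D$ into the disk $|w|<1$, so $\real\bigl(1+zh''(z)/h'(z)\bigr)>0$. Since $F_q(z)=1+\sum_{n\ge1}(-1)^nA_n(q)z^n$, we have
\[G_q(z)=z+\sum_{n\ge2}(-1)^{n+1}\frac{A_n(q)}{A_1(q)}z^n,\]
so it suffices to show $\sum_{n\ge2}n^2a_n(q)\le1$ with $a_n(q)=A_n(q)/A_1(q)>0$ for $q\ge0$.

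First, I will compute the successive ratios directly from the definition of $A_n$:
\[\frac{a_{n+1}(q)}{a_n(q)}=\frac{A_{n+1}(q)}{A_n(q)}=\frac{q+n+\frac12}{(q+n+2)(2q+n+1)(n+1)}.\]
Since $q+n+\tfrac12<q+n+2$ and $2q+n+1\ge n+1$ for $q\ge0$, this gives $a_{n+1}(q)\le a_n(q)/(n+1)^2$. Iterating yields
\[a_n(q)\le a_2(q)\,\frac{4}{(n!)^2}\quad\text{for } n\ge2.\]

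Second, I will bound the leading coefficient. We have $A_1(q)=\tfrac{1}{2(q+2)}$ and
\[a_2(q)=\frac{q+\frac32}{4(q+1)(q+3)}.\]
This is decreasing in $q\ge0$, since the numerator is linear while the denominator is a quadratic growing faster, and it is elementary to check. Hence $a_2(q)\le a_2(0)=1/8$.

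Finally, combining the two steps gives
\[\sum_{n\ge2}n^2a_n(q)\le\frac{1}{8}\sum_{n\ge2}\frac{4n^2}{(n!)^2}=\frac18\Bigl(4+1+\tfrac19+\tfrac{1}{36}+\ldots\Bigr)<\frac{5.2}{8}<1,\]
which proves the convexity and univalence of $G_q$ in $\D$.

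The only delicate point is the ratio inequality, which must hold uniformly in $q\ge0$ and $n\ge1$; it is where the normalization by $A_1(q)$ and the restriction $q\ge0$ are used. A possible worry is that the crude bound might be too lossy for small $q$. The explicit numbers above, which give roughly $0.59$ at $q=0$, leave ample room, so I do not expect difficulty. If a sharper route were needed, I would instead check $\real\bigl(1+zG_q''/G_q'\bigr)>0$ using the bound $|G_q'(z)|\ge1-\sum_{n\ge2} na_n$.
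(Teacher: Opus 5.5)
Your proof is correct and is essentially the paper's argument. Both use the ratio $A_{n+1}(q)/A_n(q)=\frac{q+n+\frac12}{(q+n+2)(2q+n+1)(n+1)}$ to bound the coefficients, and then the estimate $\sum_{n\ge2}n^2 a_n(q)<1$ to get $|zG_q''(z)/G_q'(z)|<1$. The paper bounds every ratio by $1/8$, so $a_n(q)\le 8^{1-n}$ and the sum is below $233/343$; you instead combine $a_2(q)\le 1/8$ with the sharper ratio bound $1/(n+1)^2$ for $n\ge2$. One harmless slip: the $n=5$ term of $\sum 4n^2/(n!)^2$ is $1/144$, not $1/36$, which only overestimates the sum.
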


\begin{proof}[\bf Proof of Lemma \ref{lem:convex}]
Since
\[A_1(q)=\frac1{2(q+2)},\]
we clearly have
\[G_q(z)=z+\sum_{n\geq2}(-1)^{n+1}c_n(q)z^n,\quad \mbox{where}\quad c_n(q)=\frac{A_n(q)}{A_1(q)}.\]
Now, for $n\ge1$ we have
\[\frac{c_{n+1}(q)}{c_n(q)}=\frac{n+q+\frac12}{(n+q+2)(n+2q+1)(n+1)}\]
and we claim that this quotient is at most $1/8.$ Indeed, with $n=m+1$ we arrive at
\begin{align*}
&(n+q+2)(n+2q+1)(n+1)-8\left(n+q+\frac12\right)\\
&=m^3+(3q+7)m^2+(2q^2+14q+8)m+4q^2+8q\ge0.
\end{align*}
and consequently $c_n(q)\le8^{1-n}$ for all $n\ge1$ and $q\geq0.$ On the other hand, in view of the power series representation of $G_q(z)$ and the triangle inequality, observe that for $|z|<1$ we have
$$\left|G_q'(z)-1\right|+\left|zG_q''(z)\right|\leq\sum_{n\geq2}n^2c_n(q)|z|^{n-1}<\sum_{n\geq2}\frac{n^2}{8^{n-1}}=\frac{233}{343}<1,$$
which in turn implies that for all $z\in\D$ and $q\geq0$ we have
\[|zG_q''(z)|<1-|G_q'(z)-1|\le |G_q'(z)|,\]
where the second inequality follows from the reversed triangle inequality. In other words, we have
\[\left|\frac{zG_q''(z)}{G_q'(z)}\right|<1,\]
and therefore
\[\real\left(1+\frac{zG_q''(z)}{G_q'(z)}\right)>0,\]
which is the standard analytic characterization for convexity.
\end{proof}

We arrive at the main theorem of this section. Figure \ref{Fig3} illustrates this result.

\begin{theorem}\label{thm:subordination}
If $p>q\geq0,$ then $F_p\prec F_q$ in $\D,$ and consequently $\mathcal{T}_p(\D)\subseteq\mathcal{T}_q(\D).$
\end{theorem}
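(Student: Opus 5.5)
The plan is to apply Wilf's subordinating factor sequence theorem to the convex function $G_q$ of Lemma \ref{lem:convex}. By that lemma, $G_q(z)=z+\sum_{n\ge2}(-1)^{n+1}c_n(q)z^n$ is convex univalent in $\D$, with coefficients $a_n=(-1)^{n+1}A_n(q)/A_1(q)$ and in particular $a_1=1\neq0$. By Lemma \ref{lemWilfcond}, $\{\beta_n(p,q)\}_{n\ge1}$ is a subordinating factor sequence for $p>q\ge0$. Hence the function
\[g(z)=\sum_{n\ge1}\beta_n(p,q)a_nz^n=\sum_{n\ge1}(-1)^{n+1}\frac{A_n(p)}{A_1(q)}z^n\]
is subordinate to $G_q$ in $\D$, so $g(z)=G_q(\omega(z))$ for some Schwarz function $\omega$.

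Next I identify $g$. Since $\beta_n(p,q)=A_n(p)/A_n(q)$, we have
\[g(z)=\frac{1}{A_1(q)}\sum_{n\ge1}(-1)^{n+1}A_n(p)z^n=\frac{1-F_p(z)}{A_1(q)}.\]
The relation $g=G_q\circ\omega$ then reads
\[\frac{1-F_p(z)}{A_1(q)}=\frac{1-F_q(\omega(z))}{A_1(q)},\]
that is, $F_p(z)=F_q(\omega(z))$ for $z\in\D$. This is exactly $F_p\prec F_q$. Since the composition uses the same $\omega$, the affine normalization causes no trouble.

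Finally, subordination gives $F_p(\D)=F_q(\omega(\D))\subseteq F_q(\D)$. Because $F_q$ is univalent (it is an affine image of the univalent $G_q$), this inclusion is equivalent to $F_p\prec F_q$. The earlier observation $\mathcal{T}_\nu(\D)=F_\nu(\D)$, which holds because $z\mapsto z^2$ maps $\D$ onto $\D$, then yields $\mathcal{T}_p(\D)\subseteq\mathcal{T}_q(\D)$.

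All the real work is contained in Lemmas \ref{lemWilfcond} and \ref{lem:convex}, so the proof itself is bookkeeping. The main point to watch is that Wilf's definition requires the multiplier to act on the coefficients of a convex function that vanishes at $0$. That is why I apply it to $G_q$ rather than to $F_q$ directly, and why I must check that the sign pattern $(-1)^{n+1}$ and the factor $1/A_1(q)$ carry through identically on both sides.
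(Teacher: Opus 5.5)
Your proof is correct and follows the paper's argument. The paper likewise writes $(1-F_p)/A_1(q)$ as the Hadamard product of the convex function $G_q$ from Lemma \ref{lem:convex} with $\sum_{n\ge1}\beta_n(p,q)z^n$, invokes Lemma \ref{lemWilfcond} to get $(1-F_p)/A_1(q)\prec G_q$, and transfers the same Schwarz function to obtain $F_p\prec F_q$ and $\mathcal{T}_p(\D)=F_p(\D)\subseteq F_q(\D)=\mathcal{T}_q(\D)$; your appeal to the univalence of $F_q$ is harmless but not needed, since $F_q(\omega(\D))\subseteq F_q(\D)$ follows from subordination alone.
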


\begin{figure}[t]
	\centering
	\includegraphics[width=0.9\textwidth]{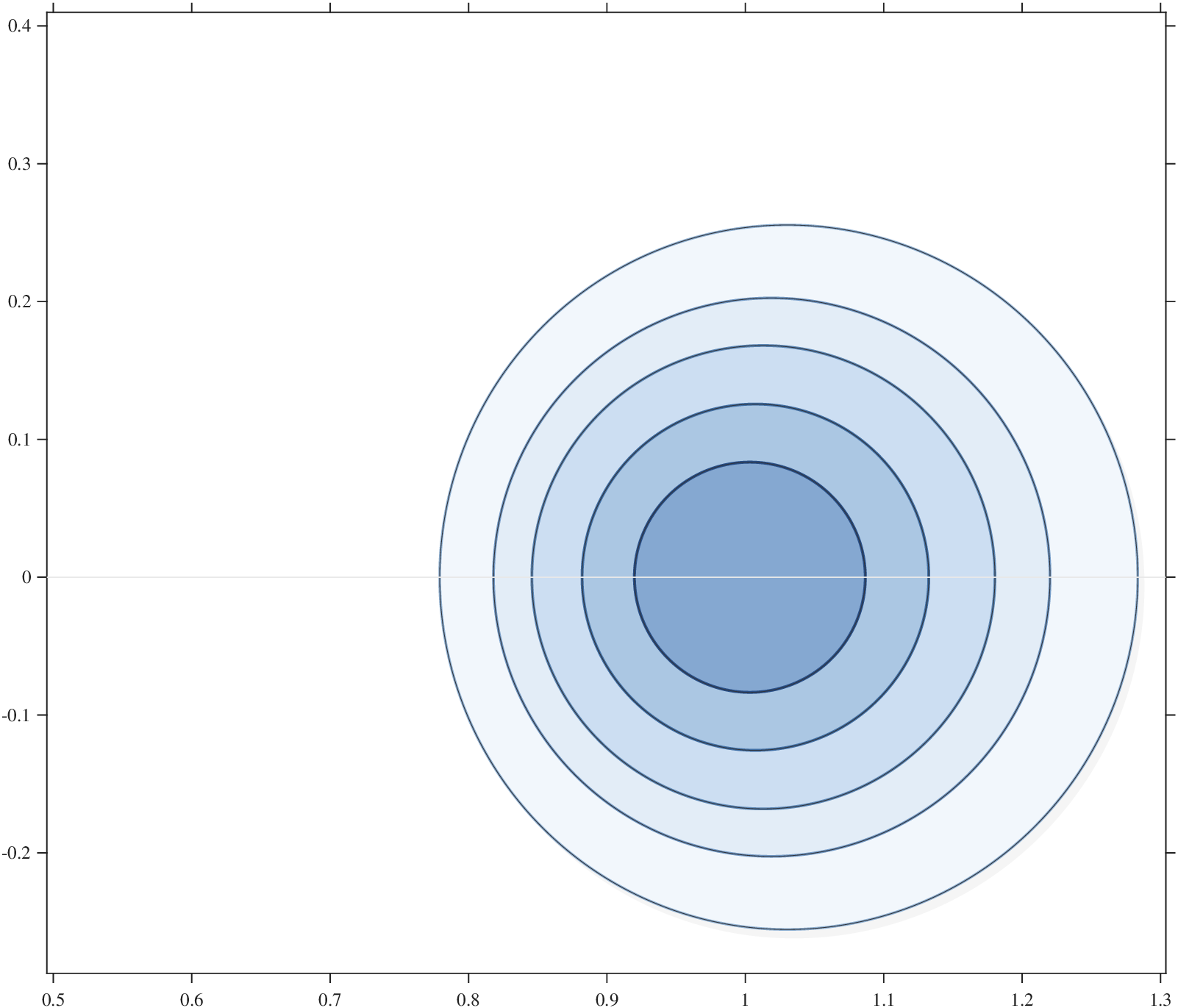}
	\caption{Numerical illustration of the image domains of the normalized Bessel Tur\'anian for $\nu\in\left\{0,\frac12,1,2,4\right\}$. The boundaries are parametrized by $w=F_\nu(e^{it})$, $0\le t\le 2\pi$, and illustrate $\mathcal{T}_4(\mathbb D)\subset \mathcal{T}_2(\mathbb D)\subset \mathcal{T}_1(\mathbb D)\subset \mathcal{T}_{1/2}(\mathbb D)\subset \mathcal{T}_0(\mathbb D).$}
	\label{Fig3}
\end{figure}

\begin{proof}[\bf Proof of Theorem \ref{thm:subordination}]
Since
\[A_n(p)=\beta_n(p,q)A_n(q),\]
we clearly have
$$\frac{1-F_p(z)}{A_1(q)}=G_q(z)*\left(\sum_{n\geq1}\beta_n(p,q)z^n\right),$$
where $*$ denotes the Hadamard product of two power series. By Lemma \ref{lem:convex} the function $G_q$ is convex and univalent, and by
Lemma \ref{lemWilfcond} the sequence $\{\beta_n(p,q)\}_{n\ge1}$ is a subordinating factor sequence. Hence the definition of the subordinating factor sequences and the above Hadamard product representation implies the next subordination
\[\frac{1-F_p}{A_1(q)}\prec\frac{1-F_q}{A_1(q)}.\]
Since multiplication by $A_1(q)$ and subtraction from $1$ preserve the same Schwarz function, we arrive at
$F_p\prec F_q$ and consequently $\mathcal{T}_p(\D)=F_p(\D)\subseteq F_q(\D)=\mathcal{T}_q(\D).$
\end{proof}

\begin{remark}
{\em We note that Theorem \ref{thm:subordination} is stronger than the sufficient criterion obtained by requiring $\{\beta_n(p,q)\}_{n\geq0}$ to be a convex null sequence. The latter imposes an additional algebraic restriction on $p$ and $q,$ whereas the positive definiteness argument applied above proves Wilf's
condition for every $p>q\geq0.$ More precisely, it is possible to show that for $p>q\geq0$ the sequence $\{\beta_n(p,q)\}_{n\geq0}$ is strictly decreasing. Indeed, if we denote $r_n(p,q)={\beta_{n+1}(p,q)}/{\beta_n(p,q)},$ then
$$r_n(p,q)=\frac{(n+p+\frac12)(n+q+2)(n+2q+1)}{(n+q+\frac12)(n+p+2)(n+2p+1)}$$
and
$$1-r_n(p,q)=\frac{(p-q)\left(4n^2+4n(p+q)+7n+4pq+2p+2q+1\right)}{(n+p+2)(n+2p+1)(2n+2q+1)}>0$$
for all $p>q\geq0$ and $n\geq0.$ Thus $0<r_n(p,q)<1$ and hence $\beta_{n+1}(p,q)<\beta_n(p,q)$ for all $p>q\geq0$ and $n\geq0.$ Moreover, it is also possible to show that the elements of this sequence satisfy the convexity property $$\beta_n(p,q)-2\beta_{n+1}(p,q)+\beta_{n+2}(p,q)>0$$ for all $n\geq1$ and $p>q\geq0.$ Indeed, if use the notation $p=q+d$ with $d>0$ and choose $n=m+1$, $m\ge0,$ then after clearing the positive denominator in $r_{n+1}(p,q)-r_n(p,q)$, the numerator will be $d\cdot \varphi_{q,d}(m)$, where $\varphi_{q,d}(m)$ is
\begin{align*}
8&m^4+(16d+32q+76)m^3+(8d^2+48dq+102d+48q^2+204q+257)m^2\\
&+\left(16d^2q+32d^2+48dq^2+168dq+194d+32q^3+168q^2+388q+357\right)m\\
&+8d^2q^2+20d^2q+24d^2+16dq^3+60dq^2+98dq+96d+8q^4+40q^3+98q^2+192q+162.
\end{align*}
Since every coefficient is positive, we clearly have $r_{n+1}(p,q)>r_n(p,q)$ for all $n\ge1$ and $p>q\geq0.$ This in turn implies that
$$\beta_n(p,q)-2\beta_{n+1}(p,q)+\beta_{n+2}(p,q)=\beta_n(p,q)\left(\left(1-r_n(p,q)\right)^2+r_n(p,q)(r_{n+1}(p,q)-r_n(p,q))\right)>0$$
for all $n\geq 1$ and $p>q\geq0.$ Now, suppose that we would like to apply L. Fej\'er's classical result \cite[p. 80]{Fejer1925}, which says that for the nonnegative sequence $\{c_n\}_{n\geq0}$ the sufficient condition for the real part of $c_0/2+c_1z+c_2z^2+\ldots+c_nz^n+\ldots$ to be strictly positive is that the sequence must satisfy $c_n-c_{n+1}\geq c_{n+1}-c_{n+2}\geq0$ for all $n\geq0.$ Thus we would need the condition $\beta_0(p,q)-2\beta_{1}(p,q)+\beta_{2}(p,q)>0,$ which is equivalent to
$$(2q+3)(p-q)^2+(2q^2+7q+6)(p-q)+q^2-3\ge0.$$
Thus, by using L. Fej\'er's result together with Lemma \ref{lem:convex}, we are able to prove Theorem \ref{thm:subordination} only for the case $p>q\geq\sqrt{3}.$}
\end{remark}

\begin{remark}\label{remliter}
{\em Recall that by definition a sequence $\{a_n\}_{n\geq 0}$ of nonnegative real numbers is called a {\em Hausdorff moment sequence} if there exists a finite positive Borel
measure $\rho$ on $[0,1]$ such that for all $n\geq 0$
\[a_n=\int_0^1 t^n{\rm d}\rho(t).\]
If $a_0=1$, then $\rho$ is a probability measure. Moreover, it is known that if the sequence $\{a_n\}_{n\geq 0}$ is a Hausdorff moment sequence, then the even extension $m\mapsto a_{|m|}$ is positive definite on $\mathbb Z$. To see this observe that for $m\in\mathbb Z$,
\[\widetilde a(m)=a_{|m|}=\int_0^1 t^{|m|}{\rm d}\rho(t)\]
and for each fixed $t\in[0,1]$, the sequence $m\mapsto t^{|m|}$ is positive definite on $\mathbb Z$, since
\[t^{|m|}=\frac{1}{2\pi}\int_0^{2\pi}P_t(\alpha)e^{-{\rm i}m\alpha}{\rm d}\alpha,\]
where $P_t(\alpha)$ is the Poisson kernel, defined by \eqref{kernPoisson}. Hence, for arbitrary $m_1,\ldots,m_N\in\mathbb Z$ and
$c_1,\ldots,c_N\in\mathbb C$ we have
\[\sum_{j,k=1}^Nc_j\overline{c_k}\widetilde a(m_j-m_k)=\int_0^1\sum_{j,k=1}^Nc_j\overline{c_k}t^{|m_j-m_k|}{\rm d}\rho(t)\ge0\]
and therefore $\widetilde a$ is positive definite on $\mathbb Z$.

Now, it is important to note here that a similar proof as we presented above for the Tur\'anian of Bessel functions of the first kind can be used for Bessel functions of the first kind itself. More precisely, if we focus on subordinating factor sequence part presented in Lemma \ref{lemWilfcond}, then we have the following observations concerning some of the published results in the literature. We note here that in the next cases the conditionally negative definiteness of the sequences in question is not required, only the positive definiteness.
\begin{enumerate}
\item[\bf A.] In the case of Bessel functions of the first kind the multiplier sequence (see \cite[Lemma 2]{CotirlaSzasz2024}) is $b_n(p,q)=(q+1)_n/(p+1)_n,$ which is a Hausdorff moment sequence (and in particular, a beta moment sequence), and the proof concerning the fact that $\left\{b_n(p,q)\right\}$ is a subordinating factor sequence is based on the following observation: the sequence $m\mapsto t^{|m|}$ is positive definite on $\Z$ for all $t\in[0,1],$ and consequently the sequence $m\mapsto b_{|m|}(p,q)$ is positive definite on $\mathbb{Z}.$
\item[\bf B.] In the case of Lommel functions of the first kind the multiplier sequence (see \cite[Lemma 2.1]{CotirlaSzasz2023}) has the general term $b_n=(c)_n(d)_n/((a)_n(b)_n)$ for $a>c>0$ and $b>d>0,$ and is also a Hausdorff moment sequence since it is the product of the Hausdorff moment sequences $\left\{(c)_n/(a)_n\right\}$ and $\left\{(d)_n/(b)_n\right\},$ and similarly the sequence $m\mapsto b_{|m|}$ is positive definite on $\mathbb{Z}.$
\item[\bf C.] In the case of Struve functions one of the multiplier sequences (see \cite[Lemma 2.5]{DenizSzasz2024}) has the general term $f_n=\Gamma(n+\nu+3/2)/\Gamma(n+\mu+3/2)$ for $\mu>\nu>0$ and the other one (see \cite[Lemma 2.6]{DenizSzasz2024}) has the general term $g_n=\Gamma(\nu+1+n/2)/\Gamma(\mu+1+n/2)$ for $\mu>\nu>0.$ Since the sequences $1,f_1,f_2,\ldots$ and $1,g_1,g_2,\ldots$ are Hausdorff moment sequences, it follows that the even functions $m\mapsto f_{|m|}$ and $m\mapsto g_{|m|}$ are positive definite on $\mathbb{Z}.$ However, it should be mentioned here that in \cite{KumariPrajapat2026} the result in \cite{DenizSzasz2024} was corrected and instead of the multiplier sequence $\left\{\Gamma(n+\nu+3/2)/\Gamma(n+\mu+3/2)\right\}$ its shifted form $\left\{\Gamma(n+\nu+1/2)/\Gamma(n+\mu+1/2)\right\}$ should be used.
\item[\bf D.] In the case of Wright functions the general term of the multiplier sequence is defined by $v_n=\Gamma(\alpha n+\beta)/\Gamma(\alpha n+\mu)$ for $n\geq1$ and $v_0=1,$ where $\alpha>0,$ $\mu>\beta>x^*,$ $x^*\simeq1.461632$ is the abscissa of the minimum of the gamma function, see \cite[Proposition 3.1]{AlenaziMehrez2025} for more details. By using the beta integral it can be shown that $\{1,v_1,v_2,\ldots\}$ is a Hausdorff moment sequences, which implies that the even function $m\mapsto v_{|m|}$ is positive definite on $\mathbb{Z}.$
\item[\bf E.] In the case of Le Roy-type hypergeometric functions the multiplier sequence has the general term $(\Gamma(an+A))^{\alpha}/(\Gamma(bn+B))^{\beta}$ for certain conditions on the parameters, see \cite[Proposition 3.2]{AlenaziMehrez2026}. In the special case when $a=b,$ $\alpha=\beta$ and $B>A>0$ our argument concerning the positive definiteness is also working here.
\item[\bf F.] In the case of Jackson $q$-Bessel functions the multiplier sequence is defined by $f_0=1,$ $f_n=q^{n(\mu-\nu)}\Gamma_q(\mu+1)\Gamma_q(\nu+n+1)/(\Gamma_q(\nu+1)\Gamma_q(\mu+n+1)),$ where $n\geq1,$ $\mu>\nu>-1,$ see \cite[Lemma 3.3]{OzkanKorkmazDeniz2025} for more details. It can be shown that $\{1,f_1,f_2,\ldots\}$ is a Hausdorff moment sequences, which implies that the even function $m\mapsto f_{|m|}$ is positive definite on $\mathbb{Z}.$
\end{enumerate}}
\end{remark}

\section{\bf Discussion and future work}

In this paper, by using the corresponding Riccati differential equations of the logarithmic derivative of special functions like Bessel functions of the first kind, general cylinder functions, regular Coulomb wave functions, modified Bessel functions of the second kind of purely imaginary order and parabolic cylinder functions, we obtained sharp Tur\'an type inequalities for these oscillatory special functions. An essential ingredient of the proofs was the fact that the corresponding normalized Tur\'anians are related to the logarithmic derivatives. Moreover, we have shown an inclusion property of another normalized Tur\'anian of Bessel functions of the first kind in the complex plane.

It is expected that the techniques employed in the paper (in Sections 2--6) could be useful to treat similar problems related to other special functions, such as the Struve functions, Lommel functions, Tricomi confluent hypergeometric functions or the $q$-Bessel functions. Apparently, it can be happen that the methods of the present paper are not directly applicable in these cases, however, we believe that by an appropriate reformulation of the problems or modification of our techniques, they can be tackled successfully. Moreover, we also believe that the monotony properties of other special functions in some cases may be studied more efficiently by using the approach of Section 7.

In addition, we list below a few interesting open questions that are worth addressing and that arose during the writing of this study:

\begin{enumerate}
\item[\bf Q1.] Concerning the case of Bessel functions of the first kind in Section 2, is it possible to determine the sign of the second derivative of $\beta_{\mu,n}$ with respect to $\mu$? For each $n$, does $\beta_{\mu,n}''$ changes sign exactly once?
\item[\bf Q2.] In the case when the nodal interval left endpoint lies to the left of the turning point $x=\nu$ is the function $\mu\mapsto b_{\mu,n}$ decreasing on $(0,\infty)$ for each $\alpha\in(0,\pi)$ and $n\in\mathbb{N}$ fixed? The monotonicity fails for some phases? Here $b_{\mu,n}$ is the corresponding minimum value of the normalized Tur\'anian of general Bessel functions in Section 3.
\item[\bf Q3.] Concerning the case of regular Coulomb wave functions in Section 4, is the function $L\mapsto b_{L,\eta,n}$ decreasing on $(0,\infty)$ for each $\eta>0$ and $n\in\mathbb{N}$ fixed? For fixed $(\eta,n),$ does $b_{L,\eta,n}$ have exactly one turning point as a function of $L$?
\item[\bf Q4.] For a continuously varying gap between two adjacent real zero branches of $D_\nu,$ how does the optimal Tur\'an constant $\delta_{\nu,n}$ depend on $\nu$? Is it always strictly decreasing along the branch? Here $\delta_{\mu,n}$ is the corresponding minimum value of the normalized Tur\'anian of parabolic cylinder functions in Section 6.
\end{enumerate}


\begin{thebibliography}{width}

\bibitem[\bf AM2025]{AlenaziMehrez2025}
\textsc{A. Alenazi, K. Mehrez}, A note on the monotonicity properties of two classes of analytic functions related to the Mittag-Leffler and Wright functions, {\em Results Math.} 80(7) (2025) Art. 216.

\bibitem[\bf AM2026]{AlenaziMehrez2026}
\textsc{A. Alenazi, K. Mehrez}, Convexity and monotonicity of analytic functions related to Le Roy-type hypergeometric functions, {\em Ukrainian Math. J.} 78(3) (2026) Art. 59.

\bibitem[\bf AB2009]{AndrasBaricz2009}
\textsc{S. Andr\'as, \'A. Baricz}, Monotonicity property of generalized and normalized Bessel functions of complex order, {Complex Var. Elliptic Equ.} 54(7) (2009) 689--696.

\bibitem[\bf Ba2010]{Baricz2010}
\textsc{\'A. Baricz}, Tur\'an type inequalities for modified Bessel functions, {\em Bull. Aust. Math. Soc.} 82(2) (2010) 254--264.

\bibitem[\bf Ba2015a]{Baricz2015}
\textsc{\'A. Baricz}, Bounds for Tur\'anians of modified Bessel functions, {\em Expo. Math.} 33(2) (2015) 223--251.

\bibitem[\bf Ba2015b]{Baricz2015C}
\textsc{\'A. Baricz}, Tur\'an type inequalities for regular Coulomb wave functions, {\em J. Math. Anal. Appl.} 430(1) (2015) 166--180.

\bibitem[\bf BI2013]{BariczIsmail2013}
\textsc{\'A. Baricz, M.E.H. Ismail}, Tur\'an type inequalities for Tricomi confluent hypergeometric functions, {\em Constr. Approx.} 37(2) (2013) 195--221.

\bibitem[\bf BK2016]{BariczKoumandos2016}
\textsc{\'A. Baricz, S. Koumandos}, Tur\'an-type inequalities for some Lommel functions of the first kind, {\em Proc. Edinb. Math. Soc.} 59(3) (2016) 569--579.

\bibitem[\bf BP2014]{BariczPogany2014}
\textsc{\'A. Baricz, T.K. Pog\'any}, Tur\'an determinants of Bessel functions, {\em Forum Math.} {26}(1) (2014) 295--322.

\bibitem[\bf BP2013]{BariczPonnusamy2013}
\textsc{\'A. Baricz, S. Ponnusamy}, On Tur\'an type inequalities for modified Bessel functions, {\em Proc. Amer. Math. Soc.} 141(2) (2013) 523--532.

\bibitem[\bf BPS2016]{BariczPonnusamySingh2016}
\textsc{\'A. Baricz, S. Ponnusamy, S. Singh}, Tur\'an type inequalities for general Bessel functions, {\em Math. Inequal. Appl.} 19(2) (2016) 709--719.

\bibitem[\bf BPS2017]{BariczPonnusamySingh2017}
\textsc{\'A. Baricz, S. Ponnusamy, S. Singh}, Tur\'an type inequalities for Struve functions, {\em J. Math. Anal. Appl.} 445(1) (2017) 971--984.

\bibitem[\bf BPV2011]{BariczPonnusamyVuorinen2011}
\textsc{\'A. Baricz, S. Ponnusamy, M. Vuorinen}, Functional inequalities for modified Bessel functions, {\em Expo. Math.} {29}(4) (2011) 399--414.

\bibitem[\bf BBF2018]{BechererBilarevFrentrup2018}
\textsc{D. Becherer, T. Bilarev, P. Frentrup}, Optimal liquidation under stochastic liquidity, {\em Finance Stoch.} 22(1) (2018) 39--68.

\bibitem[\bf Ch2026]{Chung2026}
\textsc{S.-Y. Chung}, Uniform bounds, zero separation and monotonicity for the regular Coulomb wave functions, {\em J. Math. Anal. Appl.} 557(1) (2026) Art. 130275.

\bibitem[\bf CS2023]{CotirlaSzasz2023}
\textsc{L.-I. Cot\^irl\u{a}, R. Sz\'asz}, The monotony of the Lommel functions, Result. Math. 78 (2023) Art. 127.

\bibitem[\bf CS2024]{CotirlaSzasz2024}
\textsc{L.-I. Cot\^irl\u{a}, R. Sz\'asz}, On the monotony of Bessel functions of the first kind, {\em Comput. Methods Funct. Theory} 24(4) (2024) 747--752.

\bibitem[\bf DS2024]{DenizSzasz2024}
\textsc{E. Deniz, R. Sz\'asz}, On the monotony of Struve functions, {\em Complex Anal. Oper. Theory} 18 (2024) Art. 120.

\bibitem[\bf Fe1925]{Fejer1925}
\textsc{L. Fej\'er}, \"Uber die Positivit\"at von Summen, die nach trigonometrischen oder Legendreschen Funktionen fortschreiten, {\em Acta Sci. Math. (Szeged)} 2 (1925) 75--86.

\bibitem[\bf Ga1971]{Gasper1971}
\textsc{G. Gasper}, On the extension of Tur\'an's inequality to Jacobi polynomials, {\em Duke Math. J.} 38 (1971) 415--428.

\bibitem[\bf Ga1972]{Gasper1972}
\textsc{G. Gasper}, An inequality of Tur\'an type for Jacobi polynomials, {\em Proc. Amer. Math. Soc.} 32 (1972) 435--439.

\bibitem[\bf KK2013]{KalmykovKarp2013}
\textsc{S.I. Kalmykov, D.B. Karp}, Log-convexity and log-concavity for series in gamma ratios and applications, {\em J. Math. Anal. Appl.} 406(2) (2013) 400--418.

\bibitem[\bf KS1960]{KarlinSzego1960}
\textsc{S. Karlin, G. Szeg\H{o}}, On certain determinants whose elements are orthogonal polynomials, {\em J. Analyse Math.} 8 (1960/61) 1--157.

\bibitem[\bf KS2010]{KarpSitnik2010}
\textsc{D. Karp, S.M. Sitnik}, Log-convexity and log-concavity of hypergeometric-like functions, {\em J. Math. Anal. Appl.} 364(2) (2010) 384--394.

\bibitem[\bf Ka2004]{Katznelson2004}
\textsc{Y. Katznelson}, {\em An Introduction to Harmonic Analysis}, Cambridge University Press, Cambridge, 2004.

\bibitem[\bf Ko2020]{Koch2020}
\textsc{T. Koch}, Universal bounds and monotonicity properties of ratios of Hermite and parabolic cylinder functions, {\em Proc. Amer. Math. Soc.} 148(5) (2020) 2149--2155.

\bibitem[\bf KP2026]{KumariPrajapat2026}
\textsc{N. Kumari, J.K. Prajapat}, On the monotony of Struve functions and Lommel functions, {\em J. Anal.} 34(2) (2026) 1215--1224.

\bibitem[\bf La1986]{Laforgia1986}
\textsc{A. Laforgia}, Inequalities and monotonicity results for zeros of modified Bessel functions of purely imaginary order, {\em Quart. Appl. Math.} 44(1) (1986) 91--96.

\bibitem[\bf Lo1990]{Lorch1990}
\textsc{L. Lorch}, Some monotonicity properties associated with the zeros of Bessel functions, {\em Arch. Math. (Brno)} 26(2-3) (1990) 137--146.

\bibitem[\bf MA2026]{MehrezAlenazi2026}
\textsc{K. Mehrez, A. Alenazi}, Convexity of Ramanujan type entire functions and its applications, {\em J. Inequal. Appl.} 2026 (2026) Art. 59.

\bibitem[\bf MB2017]{MezoBaricz2017}
\textsc{I. Mez\H o, \'A. Baricz}, Properties of the Tur\'anian of modified Bessel functions, {\em Math. Inequal. Appl.} 20(4) (2017) 991--1001.

\bibitem[\bf MM1990]{MillerMocanu1990}
\textsc{S.S. Miller, P.T. Mocanu}, Univalence of Gaussian and confluent hypergeometric functions, {\em Proc. Amer. Math. Soc.} 110(2) (1990) 333--342.

\bibitem[\bf MS1984]{MuldoonSpigler1984}
\textsc{M.E. Muldoon, R. Spigler}, Some remarks on zeros of cylinder functions, {\em SIAM J. Math. Anal.} 15(6) (1984) 1231--1233.

\bibitem[\bf NIST2010]{nist}
{\em NIST Handbook of Mathematical Functions}, edited by \textsc{F.W.J. Olver, D.W. Lozier, R.F. Boisvert, C.W. Clark}, National Institute of Standards and Technology, Washington, Cambridge University Press, Cambridge, 2010.

\bibitem[\bf \"OKD2025]{OzkanKorkmazDeniz2025}
\textsc{Y. \"Ozkan, S. Korkmaz, E. Deniz}, The monotony of the $q$-Bessel functions, {\em J. Math. Anal. Appl.} 549(1) (2025) Art. 129439.

\bibitem[\bf \"OKD\c{C}2026]{OzkanKorkmazDenizCaglar2026}
\textsc{Y. \"Ozkan, S. Korkmaz, E. Deniz, M. \c{C}a\u{g}lar}, The monotony of the $q$-Struve-Bessel functions, {\em J. Inequal. Appl.} 2026 (2026) Art. 6.

\bibitem[\bf SSV2012]{SchillingSongVondracek2012}
\textsc{R.L. Schilling, R. Song, Z. Vondra\v{c}ek}, {\em Bernstein Functions: Theory and Applications}, De Gruyter Studies in Mathematics, vol. 37, Berlin, 2012.

\bibitem[\bf Se2011]{Segura2011}
\textsc{J. Segura}, Bounds for ratios of modified Bessel functions and associated Tur\'an-type inequalities, {\em J. Math. Anal. Appl.} 374(2) (2011) 516--528.

\bibitem[\bf Se2012]{Segura2012}
\textsc{J. Segura}, On bounds for solutions of monotonic first order difference-differential systems, {\em J. Inequal. Appl.} 2012 (2012) Art. 65.

\bibitem[\bf Sk1954]{Skovgaard1954}
\textsc{H. Skovgaard}, On inequalities of the Tur\'an type, {\em Math. Scand.} 2 (1954) 65--73.

\bibitem[\bf Sz\'a1950]{Szasz1950}
\textsc{O. Sz\'asz}, Inequalities concerning ultraspherical polynomials and Bessel functions, {\em Proc. Amer. Math. Soc.} 1 (1950) 256--267.

\bibitem[\bf Sz\'a1951]{Szasz1951}
\textsc{O. Sz\'asz}, Identities and inequalities concerning orthogonal polynomials and Bessel functions, {\em J. Analyse Math.} 1 (1951) 116–134.

\bibitem[\bf Sze1948]{Szego1948}
\textsc{G. Szeg\H{o}}, On an inequality of P. Tur\'an concerning Legendre polynomials, {\em Bull. Amer. Math. Soc.} 54 (1948) 401--405.

\bibitem[\bf TN1951]{ThiruvenkatacharNanjundiah1951}
\textsc{V.R. Thiruvenkatachar, T.S. Nanjundiah}, Inequalities concerning Bessel functions and orthogonal polynomials, {\em Proc. Indian Acad. Sci. Sect. A} 33 (1951) 373--384.

\bibitem[\bf Tu1950]{Turan1950}
\textsc{P. Tur\'an}, On the zeros of the polynomials of Legendre, {\em \v{C}asopis P\v{e}st. Mat. Fys.} 75 (1950) 113--122.

\bibitem[\bf Wa1944]{Watson1944}
\textsc{G.N. Watson}, {\em A Treatise on the Theory of Bessel Functions}, 2nd ed., Cambridge University Press, Cambridge, 1944.

\bibitem[\bf Wi1961]{Wilf1961}
\textsc{H.S. Wilf}, Subordinating factor sequences for convex maps of the unit circle, {\em Proc. Am. Math. Soc.} 12(5) (1961) 689--693.

\end{thebibliography}
\end{document}